 \numberwithin{equation}{section}
\let\nc\newcommand
\let\renc\renewcommand
\theoremstyle{plain}
\newtheorem{thm}{Theorem}
\newtheorem{prop}[thm]{Proposition}
\newtheorem{cor}[thm]{Corollary}
\newtheorem{lem}[thm]{Lemma}
\theoremstyle{definition}
\newtheorem{defn}[thm]{Definition}
\newtheorem{example}[thm]{Example}
\theoremstyle{remark}
\newtheorem{remark}[thm]{Remark}
\newtheorem{rem}[thm]{Remark}
\numberwithin{thm}{section}
\newcommand{\Lmod}[1]{#1\text{-}{\mathsf{mod}}}
\newcommand{\grLmod}[1]{#1\text{-}{\mathsf{gmod}}}
\newcommand{\ot}{\otimes}
 \nc{\Qu}{\mathsf{Q}}
\nc{\op}{\mathrm{op}}
\newcommand{\idot}{{\:\raisebox{2pt}{\text{\circle*{1.5}}}}}
\DeclareMathOperator{\Ext}{\mathrm{Ext}}
\DeclareMathOperator{\im}{\mathrm{Im}}
\DeclareMathOperator{\Ker}{\mathrm{Ker}}
\DeclareMathOperator{\End}{\mathrm{End}}
\DeclareMathOperator{\rk}{\mathrm{rk}}
\DeclareMathOperator{\gr}{\mathrm{gr}}
\newcommand{\beq}{\begin{equation}\label}
\newcommand{\eeq}{\end{equation}}
\newcommand{\iso}{{\;\stackrel{_\sim}{\to}\;}}
\DeclareMathOperator{\Hom}{\mathrm{Hom}}
\DeclareMathOperator{\colim}{colim}
\nc{\Z}{\mathbb{Z}}
\newcommand{\N}{\mathbb{N}}
\newcommand{\C}{\mathbb{C}}
\nc{\rank}{\textrm{rank} \,}
\nc{\ds}{\dots}
\let\mc\mathcal
\let\mf\mathfrak
\nc{\mbf}{\mathbf}
\nc{\Res}{\mathsf{Res} \, }
\nc{\Ind}{\mathsf{Ind} \, }
\nc{\cont}{\textrm{cont}}
\renewcommand{\mod}{\textrm{mod}}
\nc{\msf}{\mathsf}
\nc{\minusone}{-1}
\nc{\minustwo}{-2}
\nc{\Mod}{\mathrm{Mod} \,}
\nc{\ms}{\mathscr}
\nc{\Frac}{\mathrm{Frac} \,}
\nc{\ra}{\rightarrow}
\nc{\hra}{\hookrightarrow}
\nc{\lab}{\label}
\renc{\O}{\mc{O}}
\nc{\Tan}{\mc{T}}
\nc{\ul}{\underline}
\nc{\s}{\mathfrak{S}}
\nc{\g}{\mf{g}}
\nc{\pa}{\partial}
\nc{\tit}{\textit}
\nc{\Maxspec}{\mathrm{Maxspec} \, }
\nc{\gldim}{\mathrm{gl.dim}}
\nc{\rkm}{\mathrm{rk} \, (\mf{m})}
\nc{\sm}{\mathrm{sm}}
\nc{\PD}{\mathbb{PD}}
\nc{\hilb}{\textrm{Hilb}}
\nc{\T}{\mathbb{T}}
\nc{\X}{\mathbb{X}}
\nc{\F}{\mathbb{F}}
\nc{\id}{\msf{id}}
\nc{\A}{\mathbb{A}}
\nc{\Grat}{\mc{Grat}}
\nc{\Squo}[1]{\A^{(#1)}}
\nc{\twist}{\mathrm{twist}}
\nc{\Cd}{\mc{C}}
\nc{\Span}{\mathrm{Span}}
\nc{\Grass}{\mathrm{Gr}}
\nc{\Supp}{\mathrm{Supp}}
\nc{\Irr}{\mathrm{Irr}}
\newcommand{\ok}{\otimes_\rin}
\renc{\gr}{\mathsf{gr}}
\nc{\fin}{\mathrm{fin}}
\nc{\aff}{\mathrm{aff}}
\nc{\algD}{\mf{D}}
\nc{\hr}{\mf{h}_{\textrm{reg}}}
\nc{\D}{\mathscr{D}}
\nc{\PIdeg}{\mathrm{PI-degree}}
\nc{\ch}{\mathrm{ch}}
\nc{\ev}{\mathsf{ev}}
\nc{\Stab}{\mathrm{Stab}}
\nc{\Der}{\mathrm{Der}}
\nc{\rightsim}{\stackrel{\sim}{\longrightarrow}}
\nc{\HZ}{H_{\mbf{h},\Z}(\Z_m)}
\nc{\sing}{\mathrm{sing}}
\nc{\dd}{\mathscr{D}}
\nc{\bc}{\mathbf{c}}
\nc{\vc}{\underline{\mathbf{c}}}
\nc{\ba}{\mathbf{a}}
\nc{\reg}{\mathrm{reg}}
\nc{\Amp}{\mathrm{Amp}}
\nc{\Nef}{\mathrm{Nef}}
\nc{\SL}{\mathrm{SL}}
\nc{\Sp}{\mathrm{Sp}}
\nc{\Sym}{\mathrm{Sym}}
\nc{\Mov}{\mathrm{Mov}}
\nc{\Pic}{\mathrm{Pic}}
\nc{\rin}{k}
\nc{\Cs}{\C^{\times}}
\nc{\Nak}[3]{\mf{M}_{{#1}} ({#2},{#3}) }
\nc{\Naka}[2]{\mf{M}({#1},{#2}) }
\nc{\Mtheta}[1]{\mc{M}_{#1}}
\DeclareMathOperator{\Inj}{Inj}
\DeclareMathOperator{\Injn}{Inj-nil}
\nc{\bw}{\mathbf{w}}
\nc{\bn}{\mathbf{n}}
\nc{\CB}{\mathrm{CB}}
\nc{\GVect}{\Lambda}
\nc{\pZ}{\overline{Z}}
\nc{\Tang}{\mc{T}}
\nc{\K}{\mathbb{K}}
\renewcommand{\Im}{\mathrm{Im}\,}
\nc{\HOM}{\underline{\Hom}}
\nc{\EXT}{\underline{\Ext}}
\nc{\oR}{\otimes_R}
\newcommand{\mr}{\mathrm}
\newcommand{\coconnective}{connected graded }
\newcommand{\mb}{\mathbf}
\nc{\inj}{\mathrm{inj}}
\nc{\nil}{\mathrm{nil}}
\newcommand{\Fi}{\mb{F}}
\nc{\red}[1]{\textcolor{red}{#1}}
\definecolor{light}{gray}{.9}
\begin{document}

\title{Non-homogeneous Koszul duality in representation theory}

\author{Gwyn Bellamy}
\address{School of Mathematics and Statistics, University Place, Glasgow, G12 8QQ, Glasgow, UK.}
\email{gwyn.bellamy@glasgow.ac.uk}
\urladdr{https://www.gla.ac.uk/schools/mathematicsstatistics/staff/gwynbellamy/}

\author{Simone Castellan}
\address{School of Mathematics and Statistics, University Place, Glasgow, G12 8QQ, Glasgow, UK.}
\email{simone.castellan@glasgow.ac.uk}
\urladdr{https://sites.google.com/view/simonecastellan-math}

\author{Isambard Goodbody}
\address{School of Mathematics and Statistics, University Place, Glasgow, G12 8QQ, Glasgow, UK.}
\email{igoodbody.maths@gmail.com}
\urladdr{https://sites.google.com/view/isambard}

\begin{abstract}
Motivated by the representation theory of symplectic reflection algebras, deformed preprojective algebras, and graded Hecke algebras, we consider filtered algebras $U$ whose associated graded is Koszul. The Koszul dual of $U$, as defined by Positselski, is a curved dg-algebra. We establish an exact equivalence between the unbounded derived category of $U$ and an explicit quotient of the homotopy category of injective modules over the dual curved dg-algebra. This recovers a special case of a result of Positselski. In the case where $U$ has finite global dimension, the quotient is trivial and hence the unbounded derived category of $U$ is equivalent to the homotopy category of injective modules over the dual curved dg-algebra. 
\end{abstract}

\maketitle

\section{Introduction}

Koszul duality is an important concept in many areas of mathematics, ranging from representation theory and differential geometry to algebraic topology and commutative algebra. Its ubiquity has led to a number of abstract formulations, most notably due to Positselski \cite{positselskiiNonhomogeneousQuadraticDuality1993,positselskiTwoKindsDerived2011,positselskiRelativeNonhomogeneousKoszul2021} but also versions involving operads \cite{ginzburgKoszulDualityOperads1994,lodayOperadAssociativeAlgebras2010} and dg-categories \cite{holsteinCategoricalKoszulDuality2022}. In fact, Positselski has stated that ``it does not seem to admit a `maximal natural' generality." With this in mind, we take a different, example-led approach to Koszul duality. Specifically, we are motivated by examples from representation theory.

Koszul duality plays a central role in geometric representation theory, originally used to establish an equivalence \cite{beilinsonKoszulDualityPatterns1996a} between singular and parabolic category $\mathcal{O}$ for simple Lie algebras. It has since been generalized, for instance to Kac-Moody Lie algebras \cite{KacMoodyKoszul,BezYunKoszul} and to positive characteristic \cite{ModularKoszul}. Variations on this theme have been established for other important classes of algebras \cite{etingofKoszulityHilbertSeries2007} and Koszul duality plays a central role in symplectic duality for conic symplectic singularities \cite{BLPWAst}. 

In its simplest form, Koszul duality is a derived equivalence between certain categories of graded modules over a Koszul algebra $A$ and its dual $A^!$. A priori, it uses the grading on the algebras in an essential way. However, in seminal works by Positselski \cite{positselskiiNonhomogeneousQuadraticDuality1993,positselskiTwoKindsDerived2011,positselskiRelativeNonhomogeneousKoszul2021}, Koszul duality has been extended to the filtered setting. More precisely, one can substitute the Koszul algebra $A$ by a filtered deformation $U$. The fact that we move from graded to filtered algebras on one side of the duality translates (surprisingly) to considering curved dg-(co)algebras. These are dg-(co)algebras whose differential squares to a (in general) non-zero element $c$, called the curvature. This non-homogeneous Koszul duality can be applied to many more examples appearing in representation theory. The cost, however, is that the curvature introduces a  number of technical difficulties. 

First of all, non-zero curvature means that one cannot work with the ordinary notion of a derived category. To solve this problem,  Positselski introduces the coderived category and produces an equivalence \cite[Example~6.11, Theorem~6.12]{positselskiDifferentialGradedKoszul2023} between the derived category of a non-homogeneous Koszul algebra $U$ and the coderived category of its Koszul dual curved dg-coalgebra $A^?$: 
\begin{equation}\label{eq:Positequiv}
    D(\Lmod{U}) \iso D^{\mr{co}}(A^?\textrm{-}\mathsf{comod}).
\end{equation}

The equivalence \eqref{eq:Positequiv} is actually a special case of the general theory developed by Positselski; one does not need to make any kind of Koszulity assumption on $U$, but then one loses the description of $A^?$ as the quadratic dual. We encourage the interested reader to see his original papers to get the full picture. 

In many cases of interest to representation theorists, the situation can be simplified by avoiding reference to coalgebras. The coalgebra $A^?$ is replaced with a curved dg-algebra $A^!$ and the category of comodules with the category of \emph{locally nilpotent modules} (which are still called comodules in \cite{positselskiRelativeNonhomogeneousKoszul2021}). Then Positselski establishes\footnote{His result is more general, but specializes to the claimed equivalence in the case where the coefficient ring is semisimple.} in \cite[Corollary~6.18]{positselskiRelativeNonhomogeneousKoszul2021} an equivalence 
\begin{equation}\label{eq:Posit2}
       D(\Lmod{U}) \iso D^{\mr{co}}(A^!\textrm{-}\mathsf{mod})_\nil.
\end{equation}

The coderived category $D^{\mr{co}}(\Lmod{A^!})_\nil$ can be identified with the homotopy category $K(\Injn A^!)$ of locally nilpotent curved modules whose underlying graded $A^!$-module is injective \emph{in the category of locally nilpotent modules} \cite[Theorem~8.17]{positselskiRelativeNonhomogeneousKoszul2021}. Assuming $A^!$ is graded left Noetherian, we show that all such modules are injective in the category of graded $A^!$-modules and thus $D^{\mr{co}}(\Lmod{A^!})_\nil\simeq K(\Inj A^!)_{\mathrm{nil}}$. See Section~\ref{sec:locallynilp} for details.

As we explain in the next subsection, without any Noetherianity condition on $U$ or $A^!$, we define a certain explicit thick subcategory $\mathcal{N}$ of $K(\Inj A^!)$ and show that $D(U)$ is equivalent to $K(\Inj A^!)/\mc{N}$. Even when $A^!$ is graded left Noetherian, one cannot expect an equivalence between $D(U)$ and $K(\Inj A^!)$; we must take a proper quotient still. However, if we assume that $A^!$ is bounded or, equivalently, that $A$ has finite global dimension, we recover an equivalence $D(U)\simeq K(\Inj A^!)$. In particular, if we assume $U=A$ is a homogeneous Koszul algebra then we get an equivalence $D(A)\simeq K(\Inj A^!)/\mc N$. To the best of our knowledge, this result is new since classical Koszul duality only deals with subcategories of the derived categories $D(\grLmod{A})$ and $D(A^!)$ consisting of objects satisfying certain bounds on their grading; see \cite{beilinsonKoszulDualityPatterns1996a}.

The present paper has three goals. First, we give a direct, elementary proof of the equivalence $D(U)\simeq K(\Inj A^!)/\mc{N}$ and give various refinements under additional hypotheses. Secondly, we explain certain immediate applications of the equivalence. Finally, and perhaps most importantly, we explain that several classes of algebras that are of interest in geometric representation theory fit naturally into this framework. In particular, these include symplectic reflection algebras, deformed preprojective algebras, and graded Hecke algebras. We expect that it is a fruitful endeavour to translate representation-theoretic problems for these algebras into problems concerning curved dg-modules for $A^!$.  

\subsection{The equivalence}

We begin with a non-homogeneous quadratic algebra $U$ and assume that the associated graded $A:=\gr(U)$ is a Koszul algebra. The Koszul dual $A^!$ can be given the structure of a curved dg-algebra $(A^!,d,c)$. Consider the category $C(A^!)$ of curved dg-modules over $(A^!,d,c)$ and the category $C(U)$ of complexes of left $U$-modules. The bimodule $T:=U\ok A^!$ has a (curved) differential and allows one to define functors $F:=T\otimes_{A^!}-$ and $G:=\HOM_U(T,-)$. They give rise to a tensor-hom adjunction
\[
\begin{tikzcd}
C(U) \ar[r, shift left=1.0ex, "G"] & C(A^!) \ar[l, shift left=1.0ex, "F"],
\end{tikzcd}
\]
which descends to the homotopy categories
\[
\begin{tikzcd}
K(U) \ar[r, shift left=1.0ex, "G"] & K(A^!) \ar[l, shift left=1.0ex, "F"].
\end{tikzcd}
\]
For $M\in C(A^!)$, we define $S(M) = \left\{ m \in M \,  | \,  am = 0\  \forall a\in (A^!)^+ \right\}$. Let $K(\Inj A^!)$ be the subcategory of $K(A^!)$ consisting of curved dg-modules whose underlying graded module is injective, and consider the thick triangulated subcategory $\mathcal{N}:=\{I\in K(\Inj A^!) \, | \, S(I)\ \text{is acyclic} \}$. We show:

\begin{thm}\label{thm:mainequivalence}
    The functors $G,F$ induce inverse equivalences 
    \[
\begin{tikzcd}
D(U) \ar[r, shift left=1.0ex, "G"] & K(\Inj A^!)/\mathcal{N} \ar[l, shift left=1.0ex, "F"].
\end{tikzcd}
\]
\end{thm}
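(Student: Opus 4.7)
The plan is to descend the tensor--hom adjunction $(F, G)$ to the stated quotient categories and verify that its unit and counit become isomorphisms there.

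\emph{Step 1: both functors descend.} Since $T \cong U \otimes_k A^!$ is left $U$-free on $A^!$, there is a natural isomorphism of graded left $A^!$-modules
\[
G(N) = \HOM_U(T, N) \;\cong\; \Hom_k(A^!, N).
\]
The right-hand side is cofree, hence graded-injective, so $G$ lands in $K(\Inj A^!)$. Using $T/T(A^!)^+ \cong U$ as left $U$-modules yields the natural identity
\[
S(G(N)) = \HOM_U\bigl(T/T(A^!)^+,\,N\bigr) = \HOM_U(U, N) = N.
\]
Applied to the cone of a quasi-isomorphism, this shows $G(\mathrm{cone})$ lies in $\mathcal{N}$, so $G$ descends to $\bar G \colon D(U) \to K(\Inj A^!)/\mathcal{N}$. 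For $F$, one must show $F(I)$ is acyclic whenever $I \in \mathcal{N}$. The PBW filtration on $U$ induces a filtration on $T$ whose associated graded is the Koszul complex built from $A$ and $A^!$; tensoring with $I$ yields an exhaustive filtration on $F(I) \cong U \otimes_k I$ whose associated-graded differential couples the Koszul differential with the internal one on $I$. Koszulity of $A$ ensures the $E_1$-page of the resulting spectral sequence is built from $S(I)$, and the hypothesis that $S(I)$ is acyclic forces $E_\infty$ to vanish; convergence then gives $F(I)$ acyclic. Hence $F$ descends to $\bar F \colon K(\Inj A^!)/\mathcal{N} \to D(U)$.

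\emph{Step 2: unit and counit.} The tensor--hom adjunction descends to $\bar F \dashv \bar G$. The counit $\bar F \bar G(N) \to N$ is the evaluation $T \otimes_{A^!} \HOM_U(T, N) \to N$; the same PBW filtration identifies its associated graded with a Koszul-type resolution of $U$, giving a quasi-isomorphism in $D(U)$. For the unit $I \to \bar G\bar F(I)$, applying $S$ yields the canonical map $S(I) \to S(GF(I)) = F(I)$, and a spectral-sequence argument parallel to Step 1 shows this is a quasi-isomorphism; hence the cone of the unit lies in $\mathcal{N}$ and becomes invertible in the quotient, establishing the equivalence.

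The technical heart of the proof is the $F$-descent in Step 1 (and its counterpart for the unit in Step 2): controlling the Koszul spectral sequence on $F(I)$ for a graded-injective $I$ with acyclic socle, without any Noetherian or local-nilpotence hypothesis on $A^!$. Graded-injective $A^!$-modules are built from shifts of the injective envelope $(A^!)^*$, and the delicate point is to simultaneously ensure convergence of the spectral sequence and faithful propagation of vanishing from the socle up through the filtration.
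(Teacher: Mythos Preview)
Your strategy is essentially the paper's: both rest on (a) the counit $FG(M)\to M$ being a quasi-isomorphism via the PBW filtration and Koszulity, and (b) the map $S(I)\to F(I)$ being a quasi-isomorphism for graded-injective $I$ via a spectral sequence. The paper packages these as ``$G$ fully faithful'' plus ``$F$ conservative'' and invokes the standard criterion; you instead check unit and counit directly (observing that $S$ applied to the unit is exactly the map $S(I)\to F(I)$), which is equivalent and perhaps slightly more transparent.

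Two points, however, need correcting or sharpening. First, your closing remark that graded-injective $A^!$-modules are ``built from shifts of the injective envelope $(A^!)^*$'' is false without a Noetherian hypothesis on $A^!$, and the paper deliberately avoids any such structure theory. The right way to compute the $E_1$-page is to identify the associated-graded of $F(I)$ with the complex $\HOM_{A^!}(\mc{K}(A^!),I)$ computing $\underline{\Ext}^\bullet_{A^!}(k,I)$; then graded-injectivity of $I$ kills the higher $\Ext$ groups directly, yielding $E_1^{p,0}=S(I)^p$ and $E_1^{p,q}=0$ for $q\neq 0$. Second, you gloss over two genuinely delicate passages: convergence of the spectral sequence for $S(I)\to F(I)$ (the paper checks the filtration is exhaustive and complete and invokes the Eilenberg--Moore comparison theorem), and the counit $FG(M)\to M$ for unbounded $M$ (the paper's proof proceeds in four steps, the last two involving colimits of truncations, Mittag-Leffler conditions, and a careful cohomological-degree bound). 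Neither is automatic, and your sketch does not indicate how they are handled.
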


\begin{thm}\label{thm:main2}
Restricting to bounded above complexes, the category $\mathcal{N}$ is trivial and the equivalences of Theorem~\ref{thm:mainequivalence} restrict to
\[
\begin{tikzcd}
D^-(U) \ar[r, shift left=1.0ex, "G"] & K^-(\Inj A^!) \ar[l, shift left=1.0ex, "F"].
\end{tikzcd}
\]
Moreover, if $A$ has finite global dimension, then $\mathcal{N}$ is trivial even in unbounded complexes and we have equivalences 
\begin{equation}\label{eq:mainfgldim}
 \begin{tikzcd}
D(U) \ar[r, shift left=1.0ex, "G"] & K(\Inj A^!) \ar[l, shift left=1.0ex, "F"]
\end{tikzcd}   
\end{equation}
of the unbounded derived category. 
\end{thm}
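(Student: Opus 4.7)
The plan is to deduce both parts of Theorem~\ref{thm:main2} from Theorem~\ref{thm:mainequivalence} by refining the equivalence under the respective boundedness hypotheses.

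For the bounded-above assertion, the first step is to verify that $G = \HOM_U(T, -)$ and $F = T \otimes_{A^!} -$ respect the relevant bounded subcategories. Since $T = U \ok A^!$ carries an $A^!$-grading inherited from its right factor, concentrated in degrees $\geq 0$, a degree count applied to a bounded-above projective resolution $P^\bullet \to M$ of $M \in D^-(U)$ shows that $G(P^\bullet)$ is a bounded-above curved dg-module; the injectivity of the underlying graded $A^!$-module of $G(U) = \Hom_k(A^!, U)$ follows from a cofree-type computation. Analogously, $F$ takes $K^-(\Inj A^!)$ into $D^-(U)$. To conclude, I would show $\mc{N} \cap K^-(\Inj A^!) = 0$ directly: given $I \in K^-(\Inj A^!)$ with $S(I)$ acyclic, let $m$ be the maximal degree with $I^m \neq 0$. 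Then $I^m = S(I)^m$ (elements in the top degree are automatically $(A^!)^+$-annihilated since $I^{>m}=0$), and acyclicity of $S(I)$ forces $d \colon S(I)^{m-1} \to I^m$ to be surjective. Using injectivity of the underlying graded module, I would construct a contracting homotopy $h$ with $dh + hd = \id_I$ by downward induction on degree, using the positivity of the $A^!$-degree of the curvature $c$ to ensure that its contribution to $d^2$ vanishes at the top of the filtration.

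For the finite global dimension case, $\gldim A < \infty$ is equivalent to $A^!$ being concentrated in degrees $[0, d]$ for some $d$; in particular $((A^!)^+)^{d+1} = 0$, so the augmentation ideal is nilpotent. Given $I \in K(\Inj A^!)$ with $S(I)$ acyclic, consider the finite socle filtration $S^k(I) := \ann_I\bigl(((A^!)^+)^{k+1}\bigr)$, yielding
\[
0 \subseteq S^0(I) = S(I) \subseteq S^1(I) \subseteq \cdots \subseteq S^d(I) = I,
\]
whose successive quotients are annihilated by $(A^!)^+$ and so are merely complexes of graded $k$-vector spaces. The central claim is that acyclicity of $S(I)$ propagates to acyclicity of each subquotient $S^k(I)/S^{k-1}(I)$, via the multiplication pairing
\[
((A^!)^+)^k/((A^!)^+)^{k+1} \otimes_k \bigl(S^k(I)/S^{k-1}(I)\bigr) \longrightarrow S(I),
\]
which embeds each subquotient into a complex built from $S(I)$. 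Finite-length dévissage then gives $I$ acyclic, and the injectivity of the underlying graded module upgrades acyclicity to contractibility, establishing $\mc{N} = 0$ in all of $K(\Inj A^!)$.

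I expect the main obstacle to be the explicit contraction construction in the bounded-above case. In the ordinary uncurved setting, the fact that bounded-above acyclic complexes of injectives are contractible is classical, but the curvature $c$ entering through $d^2 = c \cdot (-)$ obstructs the standard top-down argument: each inductive step must simultaneously satisfy the homotopy relation and remain $A^!$-linear, while tracking curvature contributions that are only absent at the strict top of the filtration. The injectivity of the underlying graded module is what allows the compatible choices to be made, but the careful bookkeeping across degrees is the heart of the argument. A parallel subtlety arises in the finite-dimension case when propagating acyclicity through the socle layers, and both points ultimately hinge on the interplay between injectivity and the compatibility of $d$ with the $A^!$-action.
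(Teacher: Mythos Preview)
Your approach diverges substantially from the paper's, and the finite global dimension case contains a genuine gap.

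The paper does not attempt either an explicit contracting homotopy or a socle-filtration acyclicity argument. Instead, both parts of the theorem are reduced to a single lemma (Proposition~\ref{prop:injjacyclic}): for any locally nilpotent $I \in C(\Inj \Lambda)$, if $S(I)$ is acyclic then $I \cong 0$ in $K(\Lambda)$. The proof is categorical: one shows that the class $C(\Lambda)_I = \{M : \HOM_\Lambda(M,I) \text{ is acyclic}\}$ contains $k$, is closed under shifts, summands, coproducts, extensions, and countable unions, and hence equals all of $C(\Lambda)_{\nil}$. In particular $I \in C(\Lambda)_I$, so $\End_{K(\Lambda)}(I)=0$. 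Both assertions of the theorem then follow immediately: a bounded-above curved module is locally nilpotent (since $(A^!)^+$ raises degree), and if $A$ has finite global dimension then $A^!$ is bounded, so \emph{every} module is locally nilpotent.

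Your bounded-above argument starts correctly --- the observation $I^m = S(I)^m$ and the surjectivity of $d\colon S(I)^{m-1}\to I^m$ are right --- but the inductive step is not completed, and you correctly flag it as the crux. The difficulty is real: at step $j$ you need $d^{j-1}h^j = \id - h^{j+1}d^j$ with $h^j$ an $A^!$-linear map, and $A^!$-linearity ties all the $h^j$ together (since $h^{j+i}(ax) = a\,h^j(x)$ for $a \in (A^!)^i$), so they cannot be chosen degree by degree independently. It is not clear this approach goes through without essentially reproducing the paper's argument.

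The finite global dimension argument has a concrete error. The multiplication pairing does give an injection $S^k(I)/S^{k-1}(I) \hookrightarrow \Hom_k((A^!)^k, S(I))$, and the target is acyclic; but a \emph{subcomplex} of an acyclic complex need not be acyclic, so the propagation step fails. (Even if it held, ``$I$ acyclic'' is not meaningful for a curved module; what you need is contractibility in $K(\Lambda)$, which would follow from contractibility of the subquotients via the triangles $S^{k-1}\to S^k\to S^k/S^{k-1}$ --- but that first requires the subquotients to be acyclic as $k$-complexes, which is exactly the unproven step.) The paper sidesteps this entirely by observing that bounded $A^!$ forces every module to be locally nilpotent, whence $\mc{N}=0$ by Proposition~\ref{prop:injjacyclic}.
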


In particular, the equivalence \eqref{eq:mainfgldim} holds for symplectic reflection algebras, deformed preprojective algebras, and graded Hecke algebras. 

If we assume that $A^!$ is left graded Noetherian, Theorem \ref{thm:mainequivalence} can be refined. Note that we do not place any Noetherianity condition on $U$ or $A$. 

\begin{thm}\label{thm:main3}
    If $A^!$ is graded left Noetherian, the intersection $\mc N\cap K(\Inj A^!)_\nil$ is zero. Every $G(M)\in K(\Inj A^!)/\mc N$ is canonically isomorphic to an object $\mb G(M)$ in $K(\Inj A^!)_\nil$ and there is an equivalence 
    \[
\begin{tikzcd}
D(U) \ar[r, shift left=1.0ex, "\mb G"] & K(\Inj A^!)_\nil  \ar[l, shift left=1.0ex, "F"].
\end{tikzcd}
\]
\end{thm}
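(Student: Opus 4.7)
The plan is to deduce Theorem~\ref{thm:main3} from Theorem~\ref{thm:mainequivalence} by showing that the natural functor $\iota : K(\Inj A^!)_\nil \to K(\Inj A^!)/\mc N$ is an equivalence, and then setting $\mb G(M) := G(M)_\nil$, where $(-)_\nil$ denotes the maximal locally nilpotent submodule. It suffices to establish: (i) $\mc N \cap K(\Inj A^!)_\nil = 0$; and (ii) for every $I \in K(\Inj A^!)$, the inclusion $I_\nil \hookrightarrow I$ fits into a functorial distinguished triangle $I_\nil \to I \to I/I_\nil \to$ with $I_\nil \in K(\Inj A^!)_\nil$ and $I/I_\nil \in \mc N$. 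Given (i) and (ii), the orthogonality $\Hom_{K(\Inj A^!)}(K(\Inj A^!)_\nil, \mc N) = 0$ follows --- any morphism $I' \to N$ with $I' \in K(\Inj A^!)_\nil$ factors through $N_\nil$, which by (ii) lies in $K(\Inj A^!)_\nil \cap \mc N$ (since $S(N_\nil) = S(N)$ is acyclic) and hence vanishes by (i) --- so the triangle in (ii) realizes a semi-orthogonal decomposition and $\iota$ is an equivalence.

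Claim (ii) consists of relatively routine verifications. First, $I_\nil$ is a curved dg-submodule: writing $J := (A^!)^+$, Koszulity of $A^!$ gives $J^k = A^!_{\geq k}$, so $d$ preserves the filtration by powers of $J$, and the Leibniz identity $a(dm) = \pm(d(am) - (da)m)$ shows that $J^k m = 0$ forces $J^k(dm) = 0$. By Section~\ref{sec:locallynilp}, Noetherianity of $A^!$ guarantees that $I_\nil$ is injective as a graded module; since both $I$ and $I_\nil$ are then injective, the inclusion $I_\nil \hookrightarrow I$ splits as graded $A^!$-modules and yields a distinguished triangle in $K(\Inj A^!)$. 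The quotient $I/I_\nil$ even satisfies $S(I/I_\nil) = 0$: if $[f] \in I/I_\nil$ has $J[f] = 0$, choose generators $a_1, \ldots, a_r$ of $J$ as a left ideal (using left Noetherianity) and a common $N$ with $J^N a_i f = 0$ for all $i$; the inclusion $J^{N+1} \subseteq \sum_i J^N a_i$ then gives $J^{N+1} f = 0$, so $f \in I_\nil$.

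Claim (i) is the heart of the proof and the main obstacle. The approach I would take is via the socle filtration $\mathrm{soc}_\bullet(I)$ of $I \in K(\Inj A^!)_\nil \cap \mc N$: each $\mathrm{soc}_n(I)$ is a curved dg-submodule (since $d$ preserves the $J$-power filtration), each quotient $\mathrm{soc}_n(I)/\mathrm{soc}_{n-1}(I) = S(I/\mathrm{soc}_{n-1}(I))$ is an honest complex (annihilated by $J \ni c$), and $I = \bigcup_n \mathrm{soc}_n(I)$ by local nilpotence. The hard step is to propagate the acyclicity of $\mathrm{soc}_1(I) = S(I)$ up the filtration: the quotients $S(I/\mathrm{soc}_{n-1}(I))$ do not inherit acyclicity from $S(I)$ directly, and one must lift contracting homotopies inductively using injectivity of each $\mathrm{soc}_n(I)$ as a direct summand of $I$, then pass carefully to the colimit. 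A cleaner alternative is to exploit that, under local finiteness of $A^!$ (which holds by Koszulity), injectives in $K(\Inj A^!)_\nil$ have the cofree form $C \otimes V$ for the graded dual coalgebra $C := (A^!)^\vee$ and $V = S(I)$, with curved differential $d_I = \id_C \otimes d_V + \delta$ where $\delta$ encodes the higher coaction; a homological perturbation argument, whose series terminates on each element by local nilpotence, then promotes a contracting homotopy on $V$ (existing because $V$ is an acyclic complex over the semisimple ring $(A^!)^0$) to one on $I$. Combining (i) with Theorem~\ref{thm:mainequivalence} yields the claimed equivalence $D(U) \simeq K(\Inj A^!)_\nil$ via $\mb G$ and $F$.
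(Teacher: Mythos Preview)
Your overall strategy---reduce Theorem~\ref{thm:main3} to Theorem~\ref{thm:mainequivalence} by exhibiting a semi\nobreakdash-orthogonal decomposition $K(\Inj A^!)=\langle \mc N,\,K(\Inj A^!)_\nil\rangle$---is correct and genuinely different from what the paper does. The paper never proves that $K(\Inj A^!)_\nil\hookrightarrow K(\Inj A^!)/\mc N$ is an equivalence as an intermediate step; instead it builds the equivalence $D(U)\simeq K(\Injn A^!)$ directly via the direct\nobreakdash-sum totalization functor $\mb G$ (Theorem~\ref{thm:equivalenceinjnilp}), and only then invokes Noetherianity (Lemma~\ref{lem:gradednoetherian}) to identify $K(\Injn A^!)$ with $K(\Inj A^!)_\nil$. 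Your route is more categorical and leverages Theorem~\ref{thm:mainequivalence} more directly; the paper in fact derives a Bousfield localization equivalent to your semi\nobreakdash-orthogonal decomposition only afterwards, as an application (Proposition~\ref{prop:Bousfield}). Your proof of~(ii) and of the orthogonality are fine; the injectivity of $I_\nil$ is exactly what Lemma~\ref{lem:gradednoetherian} supplies once you observe that $\Hom(N,I_\nil)=\Hom(N,I)$ for locally nilpotent $N$.

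The real gap is claim~(i). Neither sketch you offer is a proof. In the socle\nobreakdash-filtration route you correctly identify the obstacle---acyclicity of $S(I)$ does not automatically propagate to $S(I/\mathrm{soc}_{n-1}(I))$---but ``lift contracting homotopies inductively and pass carefully to the colimit'' is an aspiration, not an argument, and it is not clear each $\mathrm{soc}_n(I)$ is itself graded\nobreakdash-injective. In the perturbation route you assume every locally nilpotent injective has the cofree form $C\otimes V$ with $V=S(I)$; this is not justified (Lemma~\ref{lem:gradednoetherian} only makes $I$ a \emph{summand} of such a module), and handling summands would bring you back to the same difficulty.

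The paper's proof of~(i) is Proposition~\ref{prop:injjacyclic}, and it is both cleaner and stronger: it needs neither the Koszul nor the Noetherian hypothesis. One shows that the class $C(\Lambda)_I=\{M:\HOM_\Lambda(M,I)\text{ acyclic}\}$ contains $\rin$, is closed under shifts, summands, coproducts, extensions, and countable unions (the last via a Mittag--Leffler argument using injectivity of $I$), hence contains every bounded module by induction on the support and every locally nilpotent module as a union of bounded ones; taking $M=I$ gives $\End_{K(\Lambda)}(I)=0$. I would replace your sketches with this argument, or simply cite Proposition~\ref{prop:injjacyclic}; with that input your semi\nobreakdash-orthogonal\nobreakdash-decomposition proof of Theorem~\ref{thm:main3} goes through.
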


All equivalences are proved using the following standard result for triangulated categories \cite[Proposition~3.2.9]{KrauseBook}: a conservative, triangulated functor $F:\mathcal{C}_1\rightarrow\mathcal{C}_2$ with a fully-faithful adjoint is an equivalence. In our case, $\mc{C}_1$ and $\mc{C}_2$ will be $K(\Inj A^!)/\mc N$ and $D(U)$, respectively.
Hence, the proof splits into two parts: proving the full-faithfulness of $G$ and proving that $\Ker F=\mathcal{N}$. 

Our approach is inspired by Fl{\o}ystad, which in \cite{floystadKoszulDualityEquivalences2005} deals with the special case of zero-curvature. In this case, we can still talk about acyclic dg-modules. Instead of our $\mc N$, Fl{\o}ystad considers two subcategories $\mc N_1\subset K(U)$ and $\mc N_2\subset K(A^!)$ of objects that are sent to acyclic objects by the functors $G$ and $F$, respectively. He then proves an equivalence $K(U)/\mc N_1\simeq K(A^!)/\mc N_2$. These categories are ``in between" the homotopy categories and the derived categories of $U$ and $A^!$.  

The first part of our proof is an easy generalization of his results, while the second part required a different approach. We reduced it to a spectral sequence argument, just as in the case of graded Koszul duality \cite{beilinsonKoszulDualityPatterns1996a}. Moreover, while \cite{floystadKoszulDualityEquivalences2005} considers only algebras over fields, we prove our results for algebras over semisimple rings. This is the setting in which Koszul duality was originally introduced in \cite{beilinsonKoszulDualityPatterns1996a}. Working over a semisimple ring creates technical difficulties, but allows us to consider interesting examples in representation theory, such as symplectic reflection algebras and deformed preprojective algebras.

\subsection{Applications}

In Section~\ref{sec:applications} we highlight a number of immediate applications of the equivalence of Theorem~\ref{thm:mainequivalence}. Let $(\Lambda,d,c)$ be a connected graded curved dg-algebra, whose underlying algebra is Koszul and graded left Noetherian. The standard $t$-structure on $D(U)$ gives rise to a $t$-structure on $K(\Inj \Lambda)_{\mathrm{nil}}$. The definition of this $t$-structure, in terms of the functor $S$, makes sense without the Koszul assumption on $\Lambda$, but we do not know if it is actually a $t$-structure without this assumption. If we assume further that $\Lambda$ is finite-dimensional and that its Koszul dual is Noetherian we show that its $K$-theory reduces to that of the base field. We also show, using Bousefield localization, that the inclusion $K(\Inj \Lambda)_{\mathrm{nil}} \hookrightarrow K(\Inj \Lambda)$ admits a left adjoint, the same being true for $K(\Inj \Lambda) \hookrightarrow K(\Lambda)$ when $\Lambda$ is finite-dimensional. 

Now assume that $U$ is a non-homogeneous Koszul algebra. The Koszul complex provides an explicit projective resolution of $U$ as a $U$-bimodule and hence a projective resolution for any (left or right) $U$-module. In the case of the deformed preprojective algebra associated to a finite connected non-Dynkin quiver, this is the resolution constructed by Crawley-Boevey \cite{crawley-boeveyDeformedPreprojectiveAlgebras2022}. This resolution gives a concise expression for the Hochschild cohomology of $U$ in terms of the Koszul dual $A^!$, a result first due to Negron \cite{negronCupProductHochschild2017}. We expect that the equivalence can be used to define shift, induction, and restriction functors for symplectic reflection algebras and deformed preprojective algebras.

\subsection{Examples} 
The general idea for non-homogeneous Koszul duality is that the linear part of the non-homogeneous quadratic relations gives rise to the differential on the dual side, while the scalar part induces the curvature. 

The standard example of Koszul duality is the symmetric algebra of a vector space $A=\Sym(V)$. The dual is the exterior algebra $A^!=\Lambda(V^*)$. We can see it as a special case of a curved dg-algebra with zero differential and zero curvature. If $V$ has a symplectic form $\omega$, we can deform $S(V)$ to obtain the Weyl algebra, with relations $\{u\ot v-v\ot u-\omega(u,v) \mid u,v\in V\}$. Thus we get a non-zero curvature $-\omega\in\Lambda^2(V^*)$ and a zero differential, since there is no linear part in the relations. If $V=\g$ is a Lie algebra, we can consider the enveloping algebra $U(\g)$ with defining relations $\{a\ot b-b\ot a-[a,b] \mid a,b\in\g\}$. Here we get a non-zero differential (obtained by dualizing the Lie bracket) on $\Lambda(\g^*)$, and zero curvature. In other words, the Koszul dual of $U(\g)$ is the standard cohomological complex of the Lie algebra $\mf g$. More generally, the deformations of $U(\g)$ introduced by Sridharan give examples where the curvature on $\Lambda(\g^*)$ is also non-zero. 

More interesting examples come from considering algebras over semisimple rings instead of fields. Let $\Gamma\subset \mathrm{Sp}(V)$ be a symplectic reflection group. The smash product $A=\Sym V \rtimes \Gamma$ is a Koszul algebra over the group algebra $\Bbbk\Gamma$. The symplectic reflection algebra $H_{t,c}(\Gamma)$ is a filtered deformation of $A$, depending on the parameters $(t,c)$. The relations only have a scalar part, so  $A^! = \wedge^{\idot} V^* \rtimes \Gamma$ has a trivial differential, but the pair $(t,c)$ defines a curvature element $c \in \wedge^{2} V^* \rtimes \Gamma$. 

Similarly, the preprojective algebra $A=\Pi(\Qu)$ of a quiver $\Qu$ can be seen as a quadratic algebra over the semisimple ring $k=\bigoplus_i\Bbbk e_i$, where $e_i$ are the trivial paths. It is Koszul if $\Qu$ is not of ADE type. It has a well-known filtered deformation, the deformed preprojective algebra $\Pi^\lambda(\Qu)$. Again, the relations only have a scalar quantum correction, so the Koszul dual $A^!$ is a curved dg-algebra with zero differential. As a vector space, $A^!$ is isomorphic to $k\oplus E^* \oplus k$, where $E=\bigoplus \Bbbk a$, where the sum is over all the arrows of the doubled quiver $\Qu \cup \Qu^{\text{op}}$.   

Other examples include graded affine Hecke algebras and degenerate affine Hecke algebras. See \cite{SW1,SW2,SW3,SW4,SW5,sheplerDeformationTheoryHopf2025} for even more general classes of filtered Koszul algebras of this kind. 

\subsection{Structure of the paper}
In Section \ref{sec:premiliminaries} we recall the definition of a {non-homogeneous Koszul algebra} and the construction of the curved dg-algebra structure on the Koszul dual. We also summarise results about linear algebra over semisimple rings and results on homogeneous Koszul duality that are used in the rest of the paper. Since the associated graded algebra plays an important role, we recall a generalization of the PBW theorem (Theorem \ref{thm:PBW}) in the Koszul setting, due to Braverman and Gaitsgory \cite{bravermanPoincareBirkhoffWitt1996}. 

In Section \ref{sec:examples} we describe the explicit examples of symplectic reflection algebras, deformed preprojective algebras, and graded Hecke algebras. The proof of the main results is in Section \ref{sec:equivalences}. After introducing the relevant categories and functors, we prove that $G$ is fully-faithful in Theorem \ref{thm:FGquasiK}. We first prove that $\Ker F=\mathcal{N}$ in the graded case (Corollary \ref{cor:homokoszulcomplex}), then reduce the filtered case to the graded case using a spectral sequence argument in Theorem \ref{thm:quasiisoJF}. We consider the case of bounded complexes and that of finite global dimension in Corollaries \ref{cor:main2abound} and \ref{cor:main2b}. In Section \ref{sec:Positselski} we discuss the relation of our work to that of Positselski and prove a specialization of the main equivalence to the Noetherian case in Corollary \ref{cor:equivalenceNoetherian}. 

Applications and future directions are discussed in Section \ref{sec:applications}. For the reader's convenience, we provide in Appendix \ref{appendix} some results on monoidal algebra that are used throughout the paper.

\subsection*{Acknowledgements}  We are grateful to Matt Booth for useful conversations about Koszul duality. The first author was supported in part by Research Project Grant RPG-2021-149 from The Leverhulme Trust. The first author was also supported by EPSRC grants EP-W013053-1 and EP-R034826-1. The second author acknowledges the financial support of EPSRC (Grant Number EP/V053728/1). The third author is supported by a PhD scholarship from the Carnegie Trust for the Universities of Scotland.

\section{Non-homogeneous Koszul algebras}\label{sec:premiliminaries}

\subsection{Basic definitions and notation}

Throughout, $\Bbbk$ denotes a fixed field. All vector spaces are over $\Bbbk$, all tensor products $\otimes$ without a subscript are over $\Bbbk$. Let $\rin$ be a semisimple algebra over $\Bbbk$ (not necessarily commutative). We always assume that $\rin$ is finite-dimensional over $\Bbbk$. Abusing terminology, a ring $U$ is said to be a $\rin$-algebra if there exists a ring homomorphism $\rin \to U$. We assume that the composition $\Bbbk \to \rin \to U$ has image in $Z(U)$ and all $\rin$-bimodules are assumed to be $\Bbbk$-central. A graded $\rin$-algebra $A$ is a graded ring with a morphism $\rin \to A_0 \subset A$. A graded $k$-algebra is said to be \textit{\coconnective}if $A = \bigoplus_{p \ge 0} A_p$ with $A_0 = \rin$. We let $-A$ denote the opposite of an algebra $A$.  Unless stated otherwise, all modules will be \textit{left} modules. 

By a finitely generated $\rin$-bimodule we mean a bimodule $E$ which is finitely generated on the left and finitely generated on the right. Denote by $T_\rin(E)$ the tensor algebra over $\rin$, defined as 
$$T_\rin(E):=\rin\oplus E\oplus (E\ok E)\oplus (E\ok E\ok E) \oplus \dots $$
We regard $T_\rin(E)$ as a graded $\rin$-algebra, with the standard grading $T_\rin(E)_i=E^{\ok i}$, and as a filtered algebra, with $T_\rin(E)=\bigcup_{p\geq0} \Fi_pT_\rin(E)$, where $\Fi_pT_\rin(E):=\bigoplus_{i\leq p}E^{\ok i}$. 

\begin{rem}
    Here and after, we mean graded/filtered in the category of $k$-bimodules, i.e.\ all homogeneous pieces are $k$-bimodules and $k$ is always in degree zero. 
\end{rem}

\begin{defn}
    A \emph{quadratic algebra} is a connected graded $k$-algebra $A$, generated by the finitely generated $k$-bimodule $E=A_1$ so that $A\cong T_k(E)/I$ as graded algebras, with the defining ideal $I$ generated by its degree two part $I_2$. 
\end{defn}

Since we have assumed that $E$ is finitely generated both as a left and as a right $\rin$-module, each $A_i$ is finite as a left and as a right $\rin$-module.

\begin{defn}
    A \emph{non-homogeneous quadratic algebra} is a filtered $k$-algebra $U=\bigcup_{p\geq0}\Fi_pU$, with $\Fi_0 U=k$, $\Fi_1 U=k\oplus E$, such that $U$ is generated by the finitely generated $k$-bimodule $E$ and $U\cong T_k(E)/J$ as filtered algebras, with the defining ideal $J$ generated by $\Fi_2 T_\rin(E)\cap J$. 
\end{defn}

\begin{rem}\label{rem:associatedquadrated}
    Every non-homogeneous quadratic algebra $U$ has an \emph{associated quadratic algebra}
    $A:=T_\rin(E)/I $, with $I$ the ideal generated by $p_2(\Fi_2T_\rin(E)\cap J)$, where $p_2 \colon T_k(E) \to T_k(E)_2$ is projection to the second degree and $J$ the defining ideal of $U$.
\end{rem}

\begin{defn}\label{defn:Koszulalgebra}
    A \emph{Koszul algebra} is a connected graded $\rin$-algebra $A$ such that the $A$-module $\rin= A / A_{> 0}$ has a graded projective resolution
    \begin{equation}\label{eq:Koszulalgebra}
        \cdots\rightarrow P^{-2}\rightarrow P^{-1}\rightarrow P^0\to \rin \to 0
    \end{equation}
    where each $P^{-i}$ is generated as an $A$-module by its degree $i$ piece.
\end{defn}

It is a well-known fact (see \cite[Proposition 1.2.3]{beilinsonKoszulDualityPatterns1996a}) that any Koszul algebra is quadratic. It is then natural to give the following definition:

\begin{defn}\label{defn:nonhomKoszul}
    A non-homogeneous quadratic algebra is non-homogeneous Koszul if its associated quadratic algebra (as defined in Remark \ref{rem:associatedquadrated}) is Koszul. 
\end{defn}

\subsection{Linear algebra}

The goal of this section is to summarize linear algebra over a semisimple ring $\rin$, analogous to the development in \cite{beilinsonKoszulDualityPatterns1996a}. Let $E$ a finitely generated $\rin$-bimodule. The space $E^* := \Hom_{-\rin}(E,\rin)$ of right $\rin$-linear maps from $E$ to $\rin$ is a $\rin$-bimodule by $(a.f)(x)=af(x)$ and $(f.a)(x)=f(a.x)$. If $M$ is a sub-bimodule of $E$, we denote by $M^\perp$ the sub-bimodule of $E^*$ of functions that are $0$ on $M$. Similarly, define ${}^* E = \Hom_{\rin}(E,\rin)$ to be the space of left $\rin$-linear maps and ${}^{\perp} M$ the sub-bimodule of ${}^* E$ of functions that are $0$ on $M$. The bimodule structure on $^*E$ is $(a \cdot f \cdot b)(e) = f(ea)b$. 

Recall that, if $E_1,E_2$ are $\rin$-bimodules, we have a natural isomorphism $E_1^*\ot_\rin E_2^*\rightarrow(E_2\ot_{\rin}E_1)^*$, given by
\begin{equation}\label{eq:dualtensorsemisimple}
    (f\ot g)(e_2\oR e_1)=f(g(e_2).e_1).
\end{equation}
In particular, $(E\ok E)^*\cong E^*\ok E^*$. 

Similarly, ${}^* E_1 \ot_{\rin} {}^* E_1 \iso {}^*(E_1 \otimes E_2)$ is given by 
\begin{equation}\label{eq:dualtensorsemisimpleleft}
(f \otimes g)(e_2 \otimes e_1) = g\big(e_2 \cdot f(e_1)\big). 
\end{equation}
If $\operatorname{Hom}_{k}(E_1,E_2)$ is the space of left \( k \)-linear maps \( E_1 \to E_2 \) and $\operatorname{Hom}_{-k}(E_1,E_2)$ the space of right \( k \)-linear maps \( E_1 \to E_2 \) then:
\[
\operatorname{Hom}_{-k}(E_1,E_2) \cong E_2 \otimes_k \operatorname{Hom}_{-k}(E_1, k)=E_2\ok E_1^*,
\]
and
\[
\operatorname{Hom}_{k}(E_1, E_2) \cong \operatorname{Hom}_{k}(E_1, k) \otimes_k E_2={}^*E_1\ok E_2.
\]
The evaluation maps are given by:
\[
\operatorname{ev}_E : E \otimes_k E^* \to k, \quad \operatorname{ev}_E(e \otimes f) = f(e),
\]
\[
\widetilde{\operatorname{ev}}_E : {}^*E \otimes_k E \to k, \quad \widetilde{\operatorname{ev}}_E(f \otimes e) = f(e), 
\]
and the coevaluation maps are:
\[
c_E : k \to \operatorname{Hom}_{-k}(E, E)=E\ok E^*, \quad c_E(r)(e) = re,
\]
\[
\widetilde{c}_E : k \to \operatorname{Hom}_{k}(E,E)={}^*E\ok E, \quad \widetilde{c}_E(r)(e) = er.
\]
There are canonical isomorphisms
\[
\varphi \colon E \xrightarrow{\sim} {}^* (E^*), \ \varphi(e)(f) = f(e), \quad \widetilde{\varphi} \colon E \xrightarrow{\sim}  ({}^* E)^*, \ \widetilde{\varphi}(e)(f) = f(e).
\]

\begin{rem}
    If $E$ is a $\Z$-graded $\rin$-bimodule then we always take the graded dual
    \[
    E^* \coloneq \bigoplus_{i \in \Z} (E)^*_i, \quad {}^* E \coloneq \bigoplus_{i \in \Z} ({}^* E)_i,
    \]
    where $(E^*)_i = (E_{-i})^*$ and $({}^* E)_i={}^*( E_{-i})$. The isomorphisms $\varphi$, $\widetilde\varphi$ are isomorphisms of graded $k$-bimodules and the evaluation and coevaluation maps are graded morphisms of $k$-bimodules. 
\end{rem}

\begin{rem}    
    If $A$ is a graded $\rin$-algebra then $A^*$ is a graded $(k,A)$-bimodule and ${}^* A$ is a graded $(A,k)$-bimodule. Here,
    \[
    (r \cdot \phi \cdot a)(b) := r \phi(ab), \quad \forall \, \phi \in A^*, \, r \in \rin, \, a,b \in A. 
    \]
    Since we do not assume $k \subset Z(A)$, there is no natural left $A$-module structure on $A^*$. 
\end{rem}

\subsection{Graded hom and tensor}

We now explain two constructions that will be used throughout the paper. Given two (cohomologically) graded modules $M,N$ over a (cohomologically) graded $\rin$-algebra $B$, one can consider the collection of graded morphisms $M\rightarrow N$. This is naturally a graded $\Bbbk$-vector space, which we denote $\HOM_B(M,N)$. Its $i$-th piece consists of all degree $i$ morphisms of graded $B$-modules $M\to N$.  If $M$ and $N$ come equipped with degree one $k$-linear maps $d_M$ and $d_N$, there is a canonical way to define a degree one morphism on $\HOM_B(M,N)$:
\begin{equation}\label{eq:homdifferential}
(f^i)_{i \in \mathbb{Z}} \mapsto (d_N^i f^i - (-1)^{\lvert f \rvert} f^{i+1} d^i_M)_{i \in \mathbb{Z}}.
\end{equation}
For example, if $M$, $N$ are the total spaces of two complexes of $B$-modules, then $\HOM_B(M,N)$ is naturally a cochain complex. We will also consider more general cases where $d_M$ and $d_N$ do not square to zero. When $M$ and $N$ are complexes, we can also consider the graded-$\Ext$:
$$\underline{\Ext}_B^{\idot}(M,N) := \bigoplus_{n \in \Z} \Ext^{\idot}_{\grLmod{B}}(M[n],N).$$
In general, $\HOM_B(M,N)$ and $\underline{\Ext}_B^{\idot}(M,N)$ are proper subspaces of $\Hom_B(M,N)$ and ${\Ext}_B^{\idot}(M,N)$, respectively.

Similarly, if $M$ is a right graded $B$-module and $N$ is a left graded $B$-module, we can consider the graded tensor product $M\ot_B N$, which is naturally a graded $\Bbbk$-vector space with 
$$(M\ot_B N)^p=\bigoplus_{m+n=p}M^m\ot_B N^n. $$
If $M$ and $N$ come equipped with degree one $k$-linear maps $d_M$ and $d_N$, there is a canonical way to define a degree one morphism on $M\ot_B N$:

\begin{equation}\label{eq:homtensor}
d(m\ot n):=d_M(m)\ot n+(-1)^{|m|}m\ot d_N(n).
\end{equation}

\begin{rem}
    Let $E_1,E_2$ be graded $k$-modules, finitely generated in each degree. Then
    \begin{equation}\label{eq:gradedhomtensordual}
        \HOM_{k}(E_1,E_2)^n\cong \prod_{m\in\Z}{}^*(E_1)^{m}\ok (E_2)^{n-m} 
    \end{equation}
    and similarly for the right linear maps. In general, the product is not a direct sum, and so there is no graded isomorphism
    $$\HOM_{k}(E_1,E_2)\ncong {}^*E_1\ok E_2, \quad \HOM_{-k}(E_1,E_2)\ncong  E_2\ok (E_1)^*. $$
    Under a boundedness condition, the product \eqref{eq:gradedhomtensordual} is finite and so gives the following proposition.
\end{rem}

\begin{prop}\label{prop:gradedkhom}
    Let $E_1,E_2$ be graded $k$-modules, finitely generated in each degree. If $E_1$ is bounded below and $E_2$ is bounded above, or vice versa, there are graded isomorphisms
    $$\HOM_{k}(E_1,E_2)\cong {}^*E_1\ok E_2, \quad \HOM_{-k}(E_1,E_2)\cong  E_2\ok (E_1)^*. $$
\end{prop}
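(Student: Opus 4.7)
The plan is to deduce the proposition directly from the preceding remark, which identifies
\[
\HOM_{k}(E_1,E_2)^n \;\cong\; \prod_{m\in\Z}{}^*(E_1)^{m}\ok (E_2)^{n-m},
\]
while by definition of the graded tensor product
\[
({}^*E_1\ok E_2)^n \;=\; \bigoplus_{m\in\Z}{}^*(E_1)^{m}\ok (E_2)^{n-m}.
\]
Thus the only point to verify is that, under the stated boundedness hypothesis, this product is supported on finitely many indices $m$ for each fixed $n$; the two graded pieces then coincide, and assembling over all $n$ gives the first isomorphism of graded $k$-bimodules.

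The index bookkeeping goes as follows. Unpacking the graded-dual convention $({}^*E_1)^m = {}^*((E_1)^{-m})$, the $m$-th factor of the product is nonzero only when both $(E_1)^{-m}\neq 0$ and $(E_2)^{n-m}\neq 0$. If $E_1$ is bounded below, say $(E_1)^i=0$ for $i<L_1$, and $E_2$ is bounded above, say $(E_2)^j=0$ for $j>M_2$, these conditions force the two inequalities $m \leq -L_1$ and $m\geq n-M_2$, confining $m$ to a finite (possibly empty) range. Symmetrically, if $E_1$ is bounded above and $E_2$ is bounded below, one gets a finite range of the form $-M_1\leq m\leq n-L_2$. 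In either case only finitely many factors contribute, so the product collapses to the direct sum, yielding the first claimed isomorphism.

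The second isomorphism $\HOM_{-k}(E_1,E_2)\cong E_2\ok (E_1)^*$ is obtained by repeating the argument with right $k$-linear maps in place of left ones and $(-)^*$ in place of ${}^*(-)$; the boundedness criterion on the indexing set is identical. No substantive obstacle arises: the proposition is really a corollary of the remark together with the elementary index bookkeeping above, the only mild subtlety being consistent use of the graded-dual sign convention.
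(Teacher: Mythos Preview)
Your proof is correct and is exactly the approach the paper takes: the paper does not give a separate proof of this proposition, instead noting in the preceding remark that under the boundedness hypothesis the product \eqref{eq:gradedhomtensordual} is finite and hence coincides with the direct sum. Your explicit index bookkeeping is a faithful expansion of that one-line justification.
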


\subsection{Homogeneous Koszul duality}

We recall some standard results about homogeneous Koszul duality. For a more detailed exposition, see \cite{beilinsonKoszulDualityPatterns1996a}.

Recall that $\rin$ is a semisimple algebra over $\Bbbk$. Let $E$ be a finitely generated $\rin$-bimodule and $Q\subset E\ok E$ a sub-bimodule. Consider the quadratic $\rin$-algebra $A= T_\rin(E)/(Q)$. The (right) quadratic dual of $A$ is the $\rin$-algebra $A^!$ defined as
\begin{equation*}
    A^!=T_\rin(E^*)/(Q^\perp),
\end{equation*}
where $Q^\perp$ is the kernel of the restriction map $E^*\ok E^*\rightarrow Q^*$. Clearly, $A^!$ is a quadratic algebra. Similarly, we can define $^!A$ using left-duals instead of right-duals.

\begin{prop}
    If $A$ is Koszul, so are $A^!$ and $^!A$.
\end{prop}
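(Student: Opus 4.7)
The plan is to use the standard characterization of Koszulity via the exactness of the (left) Koszul complex. For a quadratic $\rin$-algebra $A = T_\rin(E)/(Q)$, one constructs the complex $K^\bullet(A)$ of free left $A$-modules with
\[
K^{-n}(A) = A \otimes_\rin {}^*\!(A^!)_n,
\]
whose differential is induced by the canonical element in $E \otimes_\rin E^* \subset A \otimes_\rin A^!$ arising from the coevaluation map $\widetilde{c}_E$. A standard fact (which should be recalled from \cite{beilinsonKoszulDualityPatterns1996a} in the preceding discussion) is that $A$ is Koszul if and only if the augmented complex $K^\bullet(A) \to \rin \to 0$ is exact. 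This is just an unpacking of Definition \ref{defn:Koszulalgebra}, once one checks that each $K^{-n}(A)$ is generated in internal degree $n$, which is immediate by construction.

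The symmetric construction applied to $A^!$ gives the candidate Koszul complex $K^\bullet(A^!)$ with terms $A^! \otimes_\rin {}^*\!((A^!)^!)_n$. The quadratic dual is an involution: $(A^!)^! \cong A$, which follows from $(Q^\perp)^\perp = Q$ under the canonical identification $\widetilde\varphi \colon E \iso ({}^*\!E)^*$ (and the compatible identification of tensor squares via \eqref{eq:dualtensorsemisimpleleft}). Hence this complex simplifies to $K^{-n}(A^!) = A^! \otimes_\rin {}^*\!A_n$, and the goal becomes to prove its exactness.

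The key step is to exhibit an isomorphism between $K^\bullet(A^!)$ and a dual of $K^\bullet(A)$. Concretely, apply the functor $\HOM_\rin(-,\rin)$ degree-wise to $K^\bullet(A)$; combining \eqref{eq:dualtensorsemisimple}, \eqref{eq:dualtensorsemisimpleleft} and the boundedness of each graded piece, one identifies the result (up to reindexing and a switch from left to right modules) with the Koszul complex of the opposite algebra of $A^!$. Applying the same argument to $-A$ in place of $A$, or equivalently running the construction on the right, yields the Koszul complex of $A^!$ itself. Since $\rin$ is semisimple, the functor $\HOM_\rin(-,\rin)$ is exact, so acyclicity of $K^\bullet(A)$ transfers to acyclicity of $K^\bullet(A^!)$, proving that $A^!$ is Koszul. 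The statement for ${}^!A$ is entirely symmetric: one runs the same argument with right duals replacing left duals throughout.

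The main obstacle I anticipate is bookkeeping rather than mathematics: one must verify that under the duality $\HOM_\rin(-,\rin)$ the Koszul differential of $K^\bullet(A)$ transports to the Koszul differential of $K^\bullet(A^!)$, and keep careful track of which side of $\rin$ acts where (since $\rin$ is non-commutative and left and right duals differ). The signs and the reindexing $K^{-n} \leftrightarrow K^{n}$ also require care. Once these conventions are fixed, the proof reduces to the tautological observation that the relation $Q \subset E \otimes_\rin E$ is dual to $Q^\perp \subset E^* \otimes_\rin E^*$ at each degree, so the complexes are literally duals of one another.
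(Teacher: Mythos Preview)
The paper's own proof is a one-line citation to \cite[Proposition~2.9.1]{beilinsonKoszulDualityPatterns1996a}; your sketch is essentially the argument found there: in each fixed internal degree the Koszul complex of $A$ is a finite complex of finitely generated $\rin$-bimodules, applying $\Hom_{\rin}(-,\rin)$ (exact since $\rin$ is semisimple) produces the corresponding graded piece of the Koszul complex of the dual algebra, and exactness transfers. So the overall strategy is correct and matches the cited reference.

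There is one concrete slip in the bookkeeping, precisely of the kind you anticipated. You write $(A^!)^!\cong A$ and justify it via $\widetilde\varphi\colon E\iso ({}^*E)^*$, but $(A^!)^!$ is built on $(E^*)^*$, not $({}^*E)^*$. The paper (and \cite{beilinsonKoszulDualityPatterns1996a}) only asserts ${}^!(A^!)=A$ and $({}^!A)^!=A$; the double right dual $(A^!)^!$ need not be canonically $A$ when $\rin$ is noncommutative. The clean fix is to compare the \emph{left} Koszul complex $\mathcal{K}(A)$ with the \emph{right} Koszul complex $\widetilde{\mathcal{K}}(A^!)$ of Remark~\ref{rem:rightkoszulcomplex}: dualising $A_{n-i}\otimes_\rin Q^{(-i)}$ gives $(Q^{(-i)})^*\otimes_\rin (A_{n-i})^*\cong (A^!)^i\otimes_\rin {}^*A_{n-i}$ via Lemma~\ref{lem:!A!} and \eqref{eq:dualtensorsemisimple}, which is exactly $\widetilde{\mathcal{K}}'(A^!)$ in degree $n$. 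Exactness of the right Koszul complex then gives Koszulity of $A^!$. The argument for ${}^!A$ is symmetric with left and right swapped throughout.
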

\begin{proof}
    This is \cite[Proposition 2.9.1]{beilinsonKoszulDualityPatterns1996a}.
\end{proof}

Clearly, $^!(A^!)=A$ and $({}^!A)^!=A$, giving a duality of Koszul algebras. We call $^!A$ and $A^!$ the left/right Koszul dual of $A$, respectively.

We now recall the construction of the Koszul complex. Let $Q^{(-1)}=E$, $Q^{(-2)}=Q$, and
$$Q^{(-i)}:=\bigcap_{j=0}^{i-2} E^{\ot j}\ok R\ok E^{\ot i-j-2}\subset E^{\ot i}, \quad i \geq 2. $$
Define a complex of $A$-modules $\mathcal{K}^\bullet(A)$ as
\[
\dots\rightarrow A\ok Q^{(-2)}\rightarrow A\ok Q^{(-1)}\rightarrow A,
\]
with differential $d(a\ok (x_1\ok\cdots \ok x_i)):=(ax_1)\ok(x_2\ok\cdots\ok x_i)$ and with $A$ in degree 0. This is a complex of graded modules, meaning that there is an ``internal grading", compatible with the differentials. To avoid confusion, we denote the cohomological grading as $\mc{K}(A)^p$ and the internal grading as $\mc{K}(A)_q$. Then  
\begin{equation}\label{eq:internalgrading}
    \mc{K}(A)_q := \bigoplus_{m \in \Z} A_{q+m} \otimes Q^{(m)} = \bigoplus_{m-n=q} A_{m} \otimes Q^{(n)}, \quad \mc{K}(A)^p_q=A_{p+q}\ok Q^{(p)}. 
\end{equation}
These conventions are fixed so that the cohomological and internal grading on $\mc{K}(A)$ match that of \eqref{eq:Koszulalgebra} i.e. $\mc{K}(A)$ is concentrated in non-positive cohomological degrees and $\mc{K}(A)^{-i}$ is generated by its part of (internal) degree $i$. We call $\mathcal{K}^\bullet(A)$ the \emph{Koszul complex} of $A$.

The following is \cite[Theorem~2.6.1]{beilinsonKoszulDualityPatterns1996a}. 

\begin{thm}\label{thm:koszulprojresolution}
Let $A$ be a quadratic $\rin$-algebra. The Koszul complex is a minimal projective resolution of $\rin$, viewed as a graded left $A$-module concentrated in degree zero, if and only if $A$ is Koszul. 
\end{thm}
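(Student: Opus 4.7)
The plan is to prove both directions, starting with some preliminary verifications that hold for any quadratic algebra $A$. First, $\mathcal{K}^\bullet(A)$ is a well-defined complex: the composition $d^2\colon A\ok Q^{(-i)}\to A\ok Q^{(-i+2)}$ sends $a\ok(x_1\ok\cdots\ok x_i)$ to $(ax_1x_2)\ok(x_3\ok\cdots\ok x_i)$, and since $Q^{(-i)}\subset R\ok E^{\ok(i-2)}$ by construction, the element $x_1\ok x_2$ lies in the space of relations $R=Q$, so $x_1 x_2=0$ in $A$. Each term $A\ok Q^{(-i)}$ is a graded projective $A$-module because $\rin$ is semisimple, hence $Q^{(-i)}$ is a summand of a free right $\rin$-module, making $A\ok Q^{(-i)}$ a summand of a graded free $A$-module. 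The differential lands in $A_{>0}\cdot \mathcal{K}^{-i+1}$, so the (potential) resolution is automatically minimal in the sense that $\rin\otimes_A d=0$. Exactness at $\mathcal{K}^0=A$ follows since $\im(d^{-1})=A\cdot E=A_{>0}$, using that $A$ is generated in degree one.

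The ``if'' direction is then immediate: if $\mathcal{K}^\bullet(A)$ is a projective resolution of $\rin$, then Definition~\ref{defn:Koszulalgebra} is verified with $P^{-i}=\mathcal{K}^{-i}=A\ok Q^{(-i)}$, because $Q^{(-i)}\subset E^{\ok i}$ forces $\mathcal{K}^{-i}$ to be generated by its internal-degree-$i$ piece.

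For the ``only if'' direction, assume $A$ is Koszul and fix a minimal graded projective resolution $P^\bullet\to \rin$; a graded Nakayama argument in the connected case yields $P^{-i}=A\ok V_i$ with $V_i$ a $\rin$-bimodule concentrated in internal degree $i$. Lift the identity on $\rin$ to a chain map $\phi\colon \mathcal{K}^\bullet\to P^\bullet$. By minimality of both complexes, $\phi$ is a quasi-isomorphism if and only if the induced map on ``generators'' $\bar\phi_i\colon Q^{(-i)}\to V_i\cong \Tor^A_i(\rin,\rin)$ is an isomorphism for every $i$.

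The main obstacle is showing that this comparison $Q^{(-i)}\iso \Tor^A_i(\rin,\rin)$ is an isomorphism. Computing with the bar resolution $B^\bullet\to \rin$, the internal-degree-$i$ part of $\rin\otimes_A B^{-i}$ is $E^{\ok i}$, and a direct analysis shows the kernel of the bar differential restricted to this summand is precisely $\bigcap_j E^{\ok j}\ok R\ok E^{\ok(i-j-2)}=Q^{(-i)}$. Koszulity enters exactly here to guarantee that $\Tor^A_i(\rin,\rin)$ is purely concentrated in internal degree $i$, so no higher-degree contribution corrects this formula. This identifies $V_i$ canonically with $Q^{(-i)}$, so $\phi$ is a quasi-isomorphism and $\mathcal{K}^\bullet(A)$ is the minimal projective resolution of $\rin$.
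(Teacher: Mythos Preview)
The paper does not prove this statement; it simply cites \cite[Theorem~2.6.1]{beilinsonKoszulDualityPatterns1996a}. Your argument is essentially the standard one and is mostly correct, but two points need attention.

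First, you have interchanged the labels of the two implications. In the biconditional ``$\mathcal{K}^\bullet(A)$ is a resolution if and only if $A$ is Koszul'', the \emph{only if} direction is the implication ``$\mathcal{K}^\bullet(A)$ a resolution $\Rightarrow$ $A$ Koszul'' (your easy direction), while the \emph{if} direction is ``$A$ Koszul $\Rightarrow$ $\mathcal{K}^\bullet(A)$ a resolution'' (your hard direction).

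Second, and more substantively, there is a gap in the final step. You correctly establish that $Q^{(-i)}\cong \Tor^A_i(\rin,\rin)$ as $\rin$-bimodules (via the bar complex) and that $V_i\cong \Tor^A_i(\rin,\rin)$ (via minimality of $P^\bullet$). But to conclude that $\phi$ is a quasi-isomorphism you need the \emph{specific} map $\bar\phi_i\colon Q^{(-i)}\to V_i$ to be an isomorphism, not merely that source and target are abstractly isomorphic. The cleanest fix is to dispense with the auxiliary minimal resolution $P^\bullet$ and work directly with the bar resolution: the inclusion $\psi\colon \mathcal{K}^\bullet(A)\hookrightarrow B^\bullet$ (which is a chain map since the inner bar-differential terms vanish on $Q^{(-i)}$) is itself a lift of the identity, and after applying $\rin\otimes_A(-)$ the induced map in homology is visibly the inclusion $Q^{(-i)}\hookrightarrow E^{\otimes i}$ onto the degree-$i$ cycles, hence an isomorphism onto $\Tor^A_i(\rin,\rin)$ by Koszulity. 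Since $\bar\psi_i$ is then an isomorphism and both $\mathcal K^{-i}$ and the corresponding syzygy are generated in degree $i$, graded Nakayama gives that $\psi$ is a quasi-isomorphism. Alternatively, if you want to keep the abstract $P^\bullet$, you should note that any two lifts are chain-homotopic and $\rin\otimes_A(-)$ preserves homotopies, so $\bar\phi_i$ agrees with the bar-computed map.
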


We can equivalently define the Koszul complex in terms of the Koszul dual. 

\begin{lem}\label{lem:!A!}
Let $A$ be a Koszul algebra. Then, for all $i \ge 0$, there are isomorphisms of $\rin$-bimodules $(A^!)^i\cong\left({Q^{(-i)}}\right)^*$ and $\left({}^! A\right)^i \cong {}^* \left({Q^{(-i)}}\right)$. 
\end{lem}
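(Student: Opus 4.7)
The plan is to prove the statement directly from the definition of the quadratic dual, reducing the claim to a pure linear algebra manipulation over the semisimple ring $k$; the Koszulity of $A$ plays no role beyond ensuring $A$ is quadratic, so both dual algebras are well defined. First, unpack the definition: since $A^! = T_k(E^*)/(Q^\perp)$, the degree $i$ component is
\[
(A^!)^i \;=\; (E^*)^{\otimes i}\big/ I_i, \qquad I_i \;=\; \sum_{j=0}^{i-2} (E^*)^{\otimes j} \otimes_k Q^\perp \otimes_k (E^*)^{\otimes i-j-2},
\]
the second factor being the degree $i$ piece of the two-sided ideal $(Q^\perp)$. The target $Q^{(-i)} = \bigcap_{j=0}^{i-2} E^{\otimes j}\otimes_k Q\otimes_k E^{\otimes i-j-2}$ is built as an intersection, so taking duals should convert the intersection into a sum and match $I_i$ with the annihilator of $Q^{(-i)}$.

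The key step is to use the iterated form of the natural isomorphism \eqref{eq:dualtensorsemisimple} to identify $(E^*)^{\otimes i} \cong (E^{\otimes i})^*$ as $k$-bimodules, and then check that under this identification
\[
(E^*)^{\otimes j}\otimes_k Q^\perp \otimes_k (E^*)^{\otimes i-j-2} \;\;\longleftrightarrow\;\; \bigl(E^{\otimes j}\otimes_k Q\otimes_k E^{\otimes i-j-2}\bigr)^\perp
\]
inside $(E^{\otimes i})^*$. For the middle factor this is just the definition of $Q^\perp$ as $\ker(E^*\otimes_k E^* \to Q^*)$; for the outer factors it follows from the fact that, for finitely generated $k$-bimodules $V, W$ (with $k$ semisimple), the annihilator of $V\otimes_k M$ inside $(V\otimes_k W)^* \cong V^*\otimes_k W^*$ equals $V^*\otimes_k M^\perp$ whenever $M\subseteq W$. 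Summing over $j$ and using $\sum_\alpha V_\alpha^\perp = \bigl(\bigcap_\alpha V_\alpha\bigr)^\perp$ (valid over a semisimple ring in each finite-rank degree, since $(-)^*$ is exact), one obtains $I_i \cong (Q^{(-i)})^\perp$ as sub-bimodules of $(E^{\otimes i})^*$.

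To conclude, apply $\Hom_{-k}(-,k)$ to the short exact sequence $0 \to Q^{(-i)} \to E^{\otimes i} \to E^{\otimes i}/Q^{(-i)} \to 0$; exactness of dualization over the semisimple ring $k$ gives $(E^{\otimes i})^*/(Q^{(-i)})^\perp \cong (Q^{(-i)})^*$, and combining with the previous step yields $(A^!)^i \cong (Q^{(-i)})^*$. The statement for ${}^!A$ is proved by the same argument with left duals throughout, replacing \eqref{eq:dualtensorsemisimple} by \eqref{eq:dualtensorsemisimpleleft}. The main obstacle is nothing conceptual but rather the careful bookkeeping needed to verify the annihilator identification of tensor factors in the semisimple (rather than field) setting; everything else is formal, and all maps are visibly $k$-bilinear so the resulting isomorphisms are automatic as $k$-bimodules.
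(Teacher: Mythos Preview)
Your proposal is correct and follows essentially the same approach as the paper's proof: express $(A^!)^i$ as $(E^*)^{\otimes i}$ modulo the degree-$i$ part of the ideal $(Q^\perp)$, identify that part with the kernel of the restriction $(E^{\otimes i})^* \to (Q^{(-i)})^*$, and conclude. The paper compresses this into two sentences, whereas you spell out the intermediate steps (the annihilator identification for each tensor factor and the passage from sum of annihilators to annihilator of intersection); note only that under the order-reversing identification \eqref{eq:dualtensorsemisimple} the individual summand $(E^*)^{\otimes j}\otimes_k Q^\perp \otimes_k (E^*)^{\otimes i-j-2}$ actually matches $(E^{\otimes i-j-2}\otimes_k Q\otimes_k E^{\otimes j})^\perp$, but the sum over $j$ is unaffected by this reindexing.
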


\begin{proof}
For all $i\geq2$, we have:
    \begin{equation*}
               (A^!)^i= ( E^*)^{\ot i}/\sum_{j=0}^{i-2} ({E}^*)^{\ot (i-2-j)}\ok {Q}^\perp \ot ( E^*)^{\ot j}.
    \end{equation*}
    Since $\sum_{j=0}^{i-2} ({E}^*)^{\ot (i-2-j)}\ok {Q}^\perp \ot ( E^*)^{\ot j}$ is the kernel of the restriction $(E^{\ot i})^*\rightarrow (Q^{(-i)})^*$ we can identify $(A^!)^i\cong({Q^{(-i)}})^*$, for all $i$. Similarly, ${}^! A^i\cong{}^*({Q^{(-i)}})$.
\end{proof}

The total space of the Koszul complex is thus isomorphic to $\mathcal{K}'(A):=A\ok {}^*(A^!)$ and to $\mathcal{K}''(A):=A\ok ({}^!A)^*$. These identifications preserve the bigradings
\begin{align*}
   \mc{K}'(A)^p=A\ok{}^*((A^!)^{-p}), \quad &\mc{K}'(A)_q=\bigoplus_{m\in\Z} A_{q+m}\ok{}^*((A^!)^{-m}), \\
\mc{K}''(A)^p=A\ok({}^!A_{-p})^*, \quad &\mc{K}''(A)_q=\bigoplus_{m\in\Z} A_{q+m}\ok({}^!A_{-m})^*,    
\end{align*}

Define the differential
\[
d':A\ok {}^* ((A^!)^i) = A \ok k \ok {}^* ((A^!)^i) \xrightarrow{c_E}A\ok E \ok E^\ast \ok {}^* ((A^!)^i) \xrightarrow{m_A \ot a_{A^!}} A\ok {}^* ((A^!)^{i-1}),
\]
where $c_E$ is the coevaluation map $k \to E \ok E^\ast$, $m_A$ multiplication in $A$ and $a_{A^!}$ the restriction to $E^* \subset A^!$ of the action map $a_{A^!} \colon A^! \ot_{\rin} {}^* (A^!) \to {}^* (A^!)$ given by $a_{A^!}(b \ot f)(x) = f(xb)$. Define also 
\[
d'':A\ok (({}^!A)^i)^* \cong \Hom_{-k}(({}^!A)^i,A)\xrightarrow{}\Hom_{-k}(({}^!A)^{i-1},A)\cong A\ok (({}^!A)^{i-1})^*
\]
where, for $f\in\Hom_{-k}(({}^!A)^i,A)$, 
\[
d''(f):(({}^!A)^{i-1})^*\xrightarrow{\widetilde{c}_E}(({}^!A)^{i-1})^*\ok {}^*E\ok E\xrightarrow{ m_{^!A}\ok1}(({}^!A)^{i})^*\ok E\xrightarrow{f\ok1}A\ok E \xrightarrow{m_A} A.
\]
Explicitly, if $c_E(1):=\sum_\alpha x_\alpha\ok\hat{x}_\alpha$ and $\widetilde{c}_E(1):=\sum_\alpha \check{x}_\alpha\ok x_\alpha$, then
$$d'(a\ok f)=\sum_\alpha ax_\alpha\ok(\hat{x}_\alpha.f), \quad d''(f)(a)=\sum_\alpha f(a\check{x}_\alpha)x_\alpha. $$

\begin{prop}\label{prop:homokoszulcomplex}
    The Koszul complex $(\mathcal{K}(A),d)$ is isomorphic, as a complex of graded $A$-modules, to both $(\mathcal{K}'(A),d')$ and $(\mathcal{K}''(A),d'')$.
\end{prop}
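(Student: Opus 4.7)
The plan is to construct explicit isomorphisms $\mathcal{K}(A) \cong \mathcal{K}'(A)$ and $\mathcal{K}(A) \cong \mathcal{K}''(A)$ of bigraded $A$-modules from Lemma \ref{lem:!A!} together with the canonical biduality isomorphisms $\varphi$ and $\widetilde\varphi$, and then to verify that these identifications intertwine the differentials $d$, $d'$, $d''$.

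On underlying bigraded modules, Lemma \ref{lem:!A!} gives $(A^!)^i \cong (Q^{(-i)})^*$ and $({}^! A)^i \cong {}^*(Q^{(-i)})$ as $k$-bimodules. Since $Q^{(-i)}$ is finitely generated over $k$, applying $\varphi$ and $\widetilde\varphi$ yields
\[
{}^*\bigl((A^!)^i\bigr) \cong Q^{(-i)} \cong \bigl(({}^! A)^i\bigr)^*.
\]
Tensoring on the left with $A$ and assembling over $i$ produces the desired isomorphisms of bigraded $A$-modules; their compatibility with the cohomological and internal gradings is immediate from \eqref{eq:internalgrading} and the fact that $Q^{(-i)}$ sits in internal degree $i$.

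The substantive step is compatibility with differentials, which I would carry out for $\mathcal{K}'(A)$; the $\mathcal{K}''(A)$ case is then entirely symmetric. Since $Q^{(-i)} = \bigcap_{j=0}^{i-2} E^{\otimes j} \otimes Q \otimes E^{\otimes(i-j-2)}$, the terms with $j \geq 1$ show that $Q^{(-i)} \subset E \ok Q^{(-(i-1))}$, so every $y \in Q^{(-i)}$ decomposes as $y = \sum_\alpha x_\alpha \otimes y_\alpha$ with $y_\alpha \in Q^{(-(i-1))}$. Under $\varphi$, $y$ corresponds to $\mathrm{ev}_y$, and by definition
\[
d'(a \otimes \mathrm{ev}_y) = \sum_\beta a x_\beta \otimes (\hat{x}_\beta \cdot \mathrm{ev}_y).
\]
The crucial identity to establish is $\hat{x}_\beta \cdot \mathrm{ev}_y = \mathrm{ev}_{y_\beta}$ in ${}^*((A^!)^{i-1})$, since then $d'(a \otimes \mathrm{ev}_y)$ becomes $\sum_\beta a x_\beta \otimes \mathrm{ev}_{y_\beta}$, which corresponds under $\varphi$ to $\sum_\beta (ax_\beta) \otimes y_\beta = d(a \otimes y)$. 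Using the formula for the action $a_{A^!}$, this reduces to the pairing identity $(g \cdot \hat{x}_\beta)(y) = g(y_\beta)$ for all $g \in (A^!)^{i-1}$. Lifting $g \cdot \hat{x}_\beta$ to $g \otimes \hat{x}_\beta \in (E^*)^{\otimes(i-1)} \ok E^*$ and applying the iterated version of \eqref{eq:dualtensorsemisimple} to view it as an element of $(E \ok E^{\otimes(i-1)})^*$, one obtains
\[
(g \otimes \hat{x}_\beta)(x_\alpha \otimes y_\alpha) = g\bigl(\hat{x}_\beta(x_\alpha) \cdot y_\alpha\bigr) = \delta_{\alpha\beta}\, g(y_\alpha),
\]
and summing over $\alpha$ gives $g(y_\beta)$ as required.

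The main technical obstacle will be tracking the dualization conventions over the noncommutative base $k$: in particular, the order-reversing nature of \eqref{eq:dualtensorsemisimple} and \eqref{eq:dualtensorsemisimpleleft} must be applied consistently when identifying $(E^*)^{\otimes i}$ with $(E^{\otimes i})^*$ and when pulling multiplication in $A^!$ or ${}^! A$ back to the tensor algebra. Once these identifications are fixed, the $d''$ case proceeds symmetrically: one writes $y = \sum_\alpha y'_\alpha \otimes x_\alpha$ with $y'_\alpha \in Q^{(-(i-1))}$, and uses $\widetilde{c}_E$, \eqref{eq:dualtensorsemisimpleleft}, and $\widetilde\varphi$ in place of their right-handed analogues. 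No genuinely new ingredient beyond Lemma \ref{lem:!A!} and the pairing formulas is required.
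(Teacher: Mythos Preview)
Your approach is essentially the paper's: both identify the underlying bigraded $A$-modules via Lemma~\ref{lem:!A!} and biduality, and then verify the differentials agree by evaluating against a general element of $(A^!)^{i-1}$ using the pairing~\eqref{eq:dualtensorsemisimple}; the paper simply works with a decomposable tensor $q_1\otimes\cdots\otimes q_i$ rather than your coevaluation expansion $y=\sum_\alpha x_\alpha\otimes y_\alpha$.

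One small slip to flag: the step $\hat{x}_\beta(x_\alpha)=\delta_{\alpha\beta}$ is not valid over a noncommutative semisimple base $k$, since the dual-basis pair for a projective module only guarantees $e=\sum_\alpha x_\alpha\hat{x}_\alpha(e)$, not the Kronecker relation. This does not damage the argument, because what you actually need is the summed identity $\sum_\beta x_\beta\otimes\bigl(\sum_\alpha\hat{x}_\beta(x_\alpha)y_\alpha\bigr)=\sum_\alpha x_\alpha\otimes y_\alpha$, which follows directly from the zig-zag relation; the paper sidesteps the issue by never splitting the sum termwise. You already anticipated exactly this as the ``main technical obstacle,'' so just drop the $\delta_{\alpha\beta}$ line and pass straight to the summed equality.
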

\begin{proof}
    Let $a\in A$, $q_1\ok\cdots\ok q_i\in Q^{(-i)}$ and $f_i\ok\cdots\ok f_2\in (A^!)^{i-1}$. Then
\begin{align*}
    (d'(a\ok q_1\ok\cdots\ok q_i))(f_i\ok\cdots\ok f_2)&=\sum_\alpha ax_\alpha\ok \hat{x}_\alpha.(q_1\ok\cdots\ok q_i)(f_i\ok\cdots\ok f_2)\\
    &=\sum_\alpha ax_\alpha\ok (q_1\ok\cdots\ok q_i)(f_i\ok\cdots\ok f_2\ok \hat{x}_\alpha)\\
    &=\sum_\alpha ax_\alpha\ok (f_i\ok\cdots\ok f_2\ok \hat{x}_\alpha)(q_1\ok\cdots\ok q_i)\\
    &=\sum_\alpha ax_\alpha\ok (f_i\ok\cdots\ok f_2)(\hat{x}_\alpha(q_1).q_2\ok\cdots\ok q_i)\\
    &=\sum_\alpha ax_\alpha\ok (\hat{x}_\alpha(q_1).q_2\ok\cdots\ok q_i)(f_i\ok\cdots\ok f_2).
\end{align*}    
Hence 
$$d'(a\ok q_1\ok\cdots\ok q_i)=\sum_\alpha ax_\alpha\ok \hat{x}_\alpha(q_1).q_2\ok\cdots\ok q_i=aq_1\ok q_2\ok\cdots\ok q_i,$$
so $(\mathcal{K}(A),d)\cong(\mathcal{K}'(A),d')$. Similarly:

\begin{align*}
    (d''(a\ok q_1\ok\cdots\ok q_i))(f_i\ok\cdots\ok f_2)&=\sum_\alpha a(q_1\ok\cdots\ok q_i)(f_i\ok\cdots\ok f_2\ok \check{x}_\alpha)x_\alpha\\
    &=\sum_\alpha a(f_i\ok\cdots\ok f_2\ok \check{x}_\alpha)(q_1\ok\cdots\ok q_i)x_\alpha\\
    &=\sum_\alpha a\check{x}_\alpha(q_1.(f_i\ok\cdots\ok f_2)(q_2\cdots\ok q_i))x_\alpha\\
    &=aq_1(q_2\cdots\ok q_i))(f_i\ok\cdots\ok f_2),
\end{align*}
hence $(\mathcal{K}(A),d)\cong(\mathcal{K}''(A),d'')$.
\end{proof}

\begin{rem}\label{rem:rightkoszulcomplex}
    In the same way, we can describe three isomorphic complexes of graded right $A$-modules that give a projective resolution of $\rin$ as a graded right $A$-module:
    $$(\widetilde{\mathcal{K}}(A),d)=Q^\bullet\ok A, \quad (\widetilde{\mathcal{K}}'(A),d')=({}^!A)^*\ok A, \quad (\widetilde{\mathcal{K}}''(A),d'')={}^*(A^!)\ok A, $$
    where,
    $$d(q_i\ok\cdots\ok q_1\ok a)=q_i\ok\cdots\ok q_2\ok q_1a, \quad d'(f\ok a)=\sum_\alpha f\check{x}_\alpha\ok x_\alpha a,$$ and $(d''(\phi))(v)=\sum_\alpha x_\alpha \phi(v\hat{x}_{\alpha}) $, for $v\in A^!$ and $\phi\in\Hom_{k}(A^!,A)={}^*(A^!)\ok A$.
\end{rem}

\begin{example}
    The standard example of a Koszul algebra is $\Sym(V)$, the symmetric algebra of a finite-dimensional vector space $V$ over a field $\rin = \Bbbk$. The Koszul resolution is
    $$
    \cdots\rightarrow\Sym(V)\ot  \Lambda^2 V\rightarrow \Sym(V) \ot V\rightarrow \Sym(V)\twoheadrightarrow k.$$
    In this case $Q= \Lambda^2 V=\{x\ot y-y\ot x \ | \ x,y \in V \}$ and $Q^\perp=\{f\ot g+g\ot f \ | \ f,g \in V^* \}$, so that $\Sym(V)^!=\Lambda(V^*)$, the exterior algebra of $V^*$.
\end{example}

The following is only used in Proposition~\ref{prop:Koszuldualpreproj}. We postpone the proof to Section~\ref{sec:PropExtproof}; see also \cite[Theorem 2.10.1]{beilinsonKoszulDualityPatterns1996a}.

\begin{prop}\label{prop:koszuldualext}
     Let $A$ be a Koszul $k$-algebra. There are isomorphisms of graded $k$-algebras 
     $$
     {}^! A \cong \Ext_{A}^{\idot}({}_A k,{}_A k)^{\mathrm{op}}, \quad A^! \cong \Ext_{A}^{\idot}(\rin_A ,\rin_A),
     $$
     where ${}_A \rin$ and $\rin_A$ mean $\rin$ considered as a left and right module, respectively.
\end{prop}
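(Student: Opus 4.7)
The plan is to compute the Ext algebra $\Ext^{\idot}_A({}_A k, {}_A k)$ by applying the functor $\HOM_A(-,k)$ to the Koszul resolution, which by Theorem \ref{thm:koszulprojresolution} is a minimal graded projective resolution of ${}_A k$ whenever $A$ is Koszul. The right-module case $\Ext^{\idot}_A(k_A,k_A) \cong A^!$ is entirely parallel, using the right Koszul resolution $\widetilde{\mc K}(A)$ from Remark \ref{rem:rightkoszulcomplex} and $\HOM_{-A}(-, k)$.

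The first step is to identify the terms. Since $\mc{K}(A)^{-i} = A \ok Q^{(-i)}$ is a free left $A$-module, a left $A$-linear map $A \ok Q^{(-i)} \to k$ is determined by its restriction to $1 \ok Q^{(-i)}$, which must be left $k$-linear. Hence
$$\HOM_A\bigl(\mc{K}(A)^{-i}, k\bigr) \cong {}^*(Q^{(-i)}) \cong ({}^!A)^i,$$
where the second isomorphism is Lemma \ref{lem:!A!}. Analogously, $\HOM_{-A}(Q^{(-i)} \ok A, k) \cong (Q^{(-i)})^* \cong (A^!)^i$ for the right-module resolution.

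The second step is to show that the induced differential on the $\HOM$-complex vanishes. This is a standard consequence of minimality: the Koszul differential sends $a \ok x_1 \ok \cdots \ok x_{i+1}$ to $a x_1 \ok x_2 \ok \cdots \ok x_{i+1}$, and since $x_1 \in E = A_1 \subset A_{>0}$ the image lies in $A_{>0} \cdot \mc{K}(A)^{-i}$. After applying $\HOM_A(-, k)$, which factors through $k = A/A_{>0}$, the induced differential vanishes. We conclude that $\Ext^i_A({}_A k, {}_A k) \cong ({}^!A)^i$ as graded $k$-bimodules, and similarly $\Ext^i_A(k_A, k_A) \cong (A^!)^i$.

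The main obstacle is the third step: verifying that the Yoneda product matches the opposite of multiplication in ${}^!A$ on the left, and multiplication in $A^!$ directly on the right. For $[\phi] \in \Ext^i$ and $[\psi] \in \Ext^j$, the product is computed by lifting $\phi : \mc{K}^{-i} \to k$ to a chain map $\widetilde\phi : \mc{K}^{\bullet - i} \to \mc{K}^{\bullet}$ and composing with $\psi$ in cohomological degree $-i-j$. The cleanest way to carry out this lifting is to exploit the alternative descriptions $\mc{K}'(A) = A \ok {}^*(A^!)$ and $\mc{K}''(A) = A \ok ({}^!A)^*$ from Proposition \ref{prop:homokoszulcomplex}, whose differentials $d'$ and $d''$ are themselves defined via the action maps of $A^!$ and ${}^!A$. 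A direct calculation then shows that the chain-map lift of $\phi$ on $\mc{K}''(A)$ is given on generators by right-multiplication by $\phi$ in ${}^!A$, so that $[\psi] \cdot [\phi]$ in the Yoneda product equals $\psi \phi$ in ${}^!A$ — yielding exactly the opposite-algebra isomorphism. The contravariance of $\HOM_A(-, k)$ accounts for the ``op''; on the right-module side the two reversals cancel, leaving $A^!$ itself.
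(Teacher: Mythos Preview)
Your argument is correct and follows the classical route: compute $\Ext$ via the Koszul resolution, observe the differential vanishes by minimality, and identify the Yoneda product by explicit chain-map lifting. The only soft spot is step~3, where you assert that the lift of $\phi$ on $\mc{K}''(A)=A\ok({}^!A)^*$ is given by right multiplication by $\phi$; this is true and standard, but you have not actually carried out the verification that this commutes with $d''$, nor spelled out the composition showing $[\psi]\cdot[\phi]=\psi\phi$. In a finished proof that calculation should appear.

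The paper takes a different, less direct route. Rather than working with the Koszul complex explicitly, it defers the proof to Section~\ref{sec:PropExtproof} and invokes Proposition~\ref{prop:rigidmodules}, which is a statement about \emph{non-homogeneous} Koszul algebras $U$: for a rigid module $\lambda$ one has $\Ext^{\idot}_U(\lambda,\lambda)=\HOM_{\rin}(\lambda,{}^!A\ok\lambda)$ with an explicit product formula $\phi_1\cdot\phi_2=(m_{{}^!A}\ot\id)\circ(\id\ot\phi_1)\circ\phi_2$. Applying this with $U=A$ and $\lambda=\rin$, the paper only needs to check that $\psi(b)(r)=rb\ot1$ satisfies $\psi(b_1b_2)=\psi(b_2)\cdot\psi(b_1)$, a one-line computation. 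Underneath, of course, Proposition~\ref{prop:rigidmodules} rests on the resolution $FG(M)=U\ok{}^*(A^!)\ok M$, which for $U=A$ is exactly the Koszul complex $\mc{K}'(A)$; so the two arguments are the same at the core, but the paper's packaging lets it avoid the explicit chain-lifting entirely, at the cost of a forward reference to the applications section. Your approach is more self-contained and is what one finds in \cite{beilinsonKoszulDualityPatterns1996a}.
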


\subsection{PBW Theorem for non-homogeneous Koszul algebras}

The following PBW criterion for non-homogeneous Koszul algebras is due to Braverman and Gaitsgory \cite{bravermanPoincareBirkhoffWitt1996}. It is not needed for the proof of our main results, but has many practical applications. For example, it gives a concrete criterion for checking if a finitely presented $\rin$-algebra is non-homogeneous Koszul; see Remark~\ref{rem:PBW}.

As in the previous subsection, let $E$ be a finitely generated $k$-bimodule. Let $R\subset \Fi_2 T_k(E)$ be a sub-$k$-bimodule such that $\Fi_1 T_k(E)\cap R=0$. Let $Q:=p_2(R)$ be its projection to the degree $2$ part and consider the non-homogeneous quadratic algebra $U:=T_k(E)/(R)$ and the quadratic algebra $A:=T_k(E)/(Q)$. We refer to $R$ and $Q$ as the spaces of relations.

\begin{rem}
    Notice that the algebra $A$ will not, in general, equal the graded quadratic algebra associated to $U$ in Remark \ref{rem:associatedquadrated}. They are equal only if  $p_2(\mb{F}_2T_\rin(E)\cap(R))=Q$, or equivalently if $\mb{F}_2T_\rin(E)\cap(R)=R$. We say that $A$ is the quadratic algebra associated to the space of relations $R$. It depends on the chosen space of relation, and is not an intrinsic property of the algebra. See also Remark~\ref{rem:PBW}.
\end{rem}

\begin{defn}
    The space of relations $R$ is of \emph{Poincaré-Birkoff-Witt (PBW)} type if the natural graded epimorphism $A \twoheadrightarrow \gr (U)$ is an isomorphism of graded algebras.
\end{defn}

Since $R \cap (\rin \oplus E)$ is zero, there are $\rin$-bilinear maps
$\alpha:Q\rightarrow E$ and $\beta:Q\rightarrow \rin$
such that
\begin{equation}\label{eq:R}
    R = \{x + \alpha(x) + \beta(x) \,| \,x \in Q\}.
\end{equation}

Consider the following conditions on $\alpha$ and $\beta$, where all the maps are considered as $(E\ok Q) \cap (Q\ok E) \to E\ok E$:
    \begin{enumerate}
        \item $\im(\alpha \ok\id-\id\ok\alpha)\subset Q$.
        \item $\alpha\circ(\alpha\ok\id-\id\ok\alpha)=\beta\ok\id-\id\ok \beta$.
        \item $\beta\circ(\alpha\ok\id-\id\ok\alpha)=0$.
    \end{enumerate}
Conditions $(1)-(2)-(3)$ can alternatively be rephrased as: 
\[
\alpha \ok\id-\id\ok\alpha \colon (Q \otimes_k E) \cap (E \otimes_k Q) \to E \otimes_k E
\]
factors through $Q \subseteq E \otimes_k E$ and the following diagram commutes
\[
\begin{tikzcd}
& (Q \otimes_k E) \cap (E \otimes_k Q) \arrow[dl,bend right = 10,"0", swap] \arrow[dr,bend left=10,"\beta \otimes E - E \otimes \beta"]   \arrow[d,"\alpha \otimes E - E \otimes \alpha"] & \\
k & \arrow[l,"\beta"] Q \arrow[r,"\alpha",swap] & E 
\end{tikzcd}
\]

\begin{thm}\label{thm:PBW}
    Let $R\subset \Fi_2 T_k(E)$ with $R\cap \Fi_1 T_k(E)=0$. We have the following chain of implications:
    \begin{enumerate}[(i)]
        \item The space $R$ is of PBW-type.
        \item $(R)\cap \Fi_2 T_k(E)=R$.
        \item The maps $\alpha,\beta$ satisfy conditions $(1)-(2)-(3)$.
    \end{enumerate}
    Moreover, if $A=T_k(E)/(Q)$ is Koszul, then $(iii)\implies (i)$, hence $(i)-(ii)-(iii)$ are equivalent.
\end{thm}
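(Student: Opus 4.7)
I follow Braverman-Gaitsgory \cite{bravermanPoincareBirkhoffWitt1996}. The implications $(i)\Rightarrow(ii)\Rightarrow(iii)$ are formal and require no Koszulity, and I would prove them first. For $(i)\Rightarrow(ii)$, take $x\in (R)\cap \Fi_2 T_k(E)$ and decompose $x=x_0+x_1+x_2$ with $x_i \in E^{\ok i}$. The PBW hypothesis forces the image of $x_2$ in $\gr(U)_2 \cong A_2 = E^{\ok 2}/Q$ to vanish, hence $x_2\in Q$, so I can subtract some $r\in R$ with $p_2(r)=x_2$ and reduce to $(R)\cap\Fi_1 T_k(E)$. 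Iterating in degrees $1$ and $0$, using $R\cap\Fi_1 T_k(E)=0$ and the PBW isomorphism in low degrees, yields $x\in R$.

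For $(ii)\Rightarrow(iii)$, I would construct a distinguished element of $(R)\cap\Fi_2 T_k(E)$. Given $x\in (Q\ok E)\cap (E\ok Q)$, write $x=\sum_i q_i \ok e_i=\sum_j e'_j\ok q'_j$ and form
\[
y := \sum_i(q_i+\alpha(q_i)+\beta(q_i))\ok e_i - \sum_j e'_j\ok (q'_j+\alpha(q'_j)+\beta(q'_j)).
\]
Each sum lies in $(R)$ and the degree-$3$ parts cancel, so $y\in (R)\cap \Fi_2 T_k(E) = R$ by (ii). Writing $y=z+\alpha(z)+\beta(z)$ as in \eqref{eq:R} and comparing the degree $2$, $1$, $0$ components of $y$ extracts conditions (1), (2), (3) respectively.

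For $(iii)\Rightarrow(i)$ under the Koszul hypothesis, I would interpolate between $A$ and $U$ via the Rees-type algebra
\[
U_t := T_k(E)[t] \big/ \big( x + t\,\alpha(x) + t^2 \beta(x) : x \in Q \big),
\]
graded with $\deg t=1=\deg E$; then $U_0=A$ and $U_1=U$, so the conclusion $A\iso \gr(U)$ reduces to flatness of $U_t$ over $k[t]$. To prove flatness, I would deform the Koszul resolution $\mathcal{K}(A)$ of Proposition \ref{prop:homokoszulcomplex}: replace its differential $d$ by $d+t\,\alpha_\sharp+t^2\beta_\sharp$ on $\mathcal{K}(A)\otimes k[t]$, where $\alpha_\sharp$ and $\beta_\sharp$ apply $\alpha$ and $\beta$ to the $Q$-factor inside $Q^{(-i)}\subset E^{\ok i}$. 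The identities (1)-(3) are precisely what make the deformed differential square to zero; Koszulity provides acyclicity at $t=0$, and a standard spectral sequence argument (filtering by powers of $t$) propagates this to acyclicity over $k[t]$, producing a flat resolution of $k$ over $U_t$ and hence flatness of $U_t$.

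The main obstacle is the last implication. Conditions (1)-(3) are only quadratic identities, whereas PBW controls $\gr(U)$ in every degree, so Koszulity is essential: it ensures the Koszul complex resolves $k$ with differentials controlled by $Q$ alone, so that (1)-(3) suffice to run the deformation. The most delicate calculation is verifying that the deformed Koszul differential squares to zero, which requires careful bookkeeping of (1)-(3) inside $E^{\ok i}$. Without Koszulity, higher-degree obstructions can appear and (iii) need not imply (i).
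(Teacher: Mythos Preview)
Your proof of (i)$\Rightarrow$(ii)$\Rightarrow$(iii) matches the paper's almost verbatim. For (iii)$\Rightarrow$(i) under Koszulity, both you and the paper follow Braverman--Gaitsgory, but through different implementations. The paper invokes graded deformation theory abstractly: conditions (1)--(3) furnish a third-order graded deformation of $A$, Koszulity kills the higher obstruction classes so this extends to a full graded deformation $\widetilde U$, and one then produces a filtered map $U\to\widetilde U$ whose associated graded composed with $A\twoheadrightarrow\gr U$ is the identity. You instead write down the Rees-type family $U_t$ directly and deform the Koszul complex by hand, using (1)--(3) to verify $d^2=0$ and a spectral sequence for exactness. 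Your version is more concrete and avoids obstruction theory; the paper's version makes more transparent \emph{why} Koszulity enters (it is precisely the vanishing of the relevant $\Ext$ groups controlling obstructions).

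Two cautions on your sketch. First, your final clause elides a step: exactness of the deformed Koszul complex does not by itself give $k[t]$-flatness of $U_t$; rather, it forces a Hilbert-series identity $\dim(U_t)_m=\dim(A[t])_m$, and combined with the a priori surjection $A[t]\twoheadrightarrow\gr_t U_t$ this is what yields PBW. Second, beware a potential circularity: your spectral sequence for the deformed complex has $E_1$-page built from $\gr_t(U_t)\otimes Q^{(-i)}$, and identifying this with the Koszul complex of $A$ requires $\gr_t(U_t)\cong A[t]$, which is the conclusion. This is resolved in Braverman--Gaitsgory by an induction on internal degree, so that at each stage only already-established isomorphisms are used.
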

\begin{proof}
    This was proved in {\cite{bravermanPoincareBirkhoffWitt1996}} for algebras over a field, but the same proof works for $\rin$ a semisimple algebra over $\Bbbk$. We sketch the proof for the reader's convenience.

    \emph{(i)$\implies$(ii)} Assume that $R$ is of PBW-type and let $I$ be the ideal generated by $R$. Let $U:=T_k(E)/I$. Then the natural surjection $p:A\rightarrow\gr(U)$ is an isomorphism of graded algebras. In particular, 
    \begin{equation*}
        p_2:A_2=\Fi_2 T_k(E)/Q \iso\gr(U)_2= \Fi_2 T_k(E) /(\Fi_1 T_k(E) +\Fi_2T_k(E)\cap I), 
    \end{equation*}
    which implies $Q\cong \Fi_1 T_k(E) +\Fi_2T_k(E)\cap I$. On the other hand, $Q=p_2(R)$, so $\Fi_2 T_k(E)\cap I=R$ modulo $\Fi_1T_k(E)$. Since $p_1:E\rightarrow \Fi_1T_k(E)/(\Fi_0T_k(E)+\Fi_1T_k(E)\cap I)$ is an isomorphism, $I\cap \Fi_1T_k(E)=0$. Thus, $\Fi_2T_k(E)\cap I=R$. 

    \emph{(ii)$\implies$(iii)} Tensoring equation \eqref{eq:R} on the left and on the right with $E$, we get the following relation for every $x\in(Q \ok E) \cap (E \ok Q)$:
    \begin{equation*}
        x+(\id\ok\alpha) (x) +(\id\ok\beta) (x)=0=x+(\alpha\ok\id) (x)+(\beta\ok\id) (x) \quad (\mod\,  I),
    \end{equation*}
    hence
    \begin{equation*}
        (\alpha\ok\id-\id\ok\alpha)(x)+(\beta\ok\id -\id\ok\beta)(x)\in I\cap F_2(E)=R.
    \end{equation*}
    Applying $p_2$ to both sides we get $(\alpha\ok\id-\id\ok\alpha)(x)\in Q$. Then, by definition of $\alpha$ and $\beta$,
    $$\alpha\circ(\alpha\ok\id-\id\ok\alpha\ok\id)(x)=(\beta\ok\id -\id\ok\beta)(x), \quad \quad \beta\circ(\alpha\ok\id-\id\ok\alpha)(x)=0. $$

    The last implication is more complicated. The idea is to use the theory of graded deformations, and it goes as follows. We want to show that, if conditions $(1)$ to $(3)$ are satisfied and if $A$ is Koszul, it is always possible to construct a graded deformation of $A$ whose fibre at $t=1$ is isomorphic to $U$ as a filtered algebra. 
    One can show that conditions $(1)$ to $(3)$ guarantee the existence of a first, second, and third degree deformation of $A$, respectively. Up to this point, the condition of $A$ Koszul is not needed. If $A$ is Koszul, then every third-degree graded deformation can be extended to a full graded deformation. 

    Let $\widetilde U$ be the fibre at $t=1$ of this graded deformation. By the theory of graded deformations, there is a graded algebra isomorphism $\gr(\widetilde{U})\cong A$. One can show there is a filtered map $\phi:{U}\rightarrow \widetilde{U}$ such that the composition
    $$\begin{tikzcd}
A \arrow[r, "p"] & \gr({U}) \arrow[r, "\gr(\phi)"] & \gr(\widetilde{U}) \arrow[r] & A
\end{tikzcd} $$
    is the identity. In particular, this implies that $p, \phi$ are isomorphisms. For more details, see \cite{bravermanPoincareBirkhoffWitt1996}.    
\end{proof}

\begin{rem}\label{rem:PBW}
        Every non-homogeneous quadratic algebra $U$ has a canonical space of relations given by $R_{\text{max}}:=J\cap \Fi_2 T_k(E)$, where $J$ is the defining ideal of $U$. This is the maximal space of relations that generates $J$. Notice that $J\cap \Fi_1 T_k(E)=0$ because the quotient map $T_k(E)/J\rightarrow U$ is an isomorphism between $\Fi_1 T_k(E)$ and $\Fi_1 U$. Since $R_{\text{max}}$ satisfies condition $(ii)$ of Theorem \ref{thm:PBW}, if $U$ is non-homogeneous Koszul then the graded $\rin$-algebra $\gr(U)$ is always isomorphic to the associated quadratic algebra of~$U$.

To check if a $\rin$-algebra is non-homogeneous Koszul, one needs to know its associated quadratic algebra. However, in practice one is often given an algebra $U$ via its presentation by generators and relations. The space of relations $R$ may be strictly smaller than $R_\text{max}$. In this situation, computing the space of relations $Q$ is straightforward, but it might not be possible to describe $R_\text{max}$ or the associated quadratic algebra $A$. Condition $(iii)$ of Theorem \ref{thm:PBW} gives a direct way to check if we have $R=R_\text{max}$.
\end{rem}

\subsection{Non-homogeneous duality} 

In this section, we explain how to construct the Koszul dual of a non-homogeneous Koszul algebra (Definition~\ref{defn:nonhomKoszul}). This was first introduced by Positselski~\cite{positselskiiNonhomogeneousQuadraticDuality1993}. 

\begin{defn}
    A $\mathbb{Z}$-graded $\rin$-algebra $\Lambda = \bigoplus_{i \in \Z} \Lambda^i$ is a \emph{curved differential graded algebra (cdga)} if there exist a $\rin$-bilinear map $d:\Lambda\rightarrow \Lambda$ of degree $1$ and an element $c\in \Lambda^2$ such that
    \begin{enumerate}[(i)]
    \item $d(a b) = d(a) b + (-1)^{|a|} a d(b)$ for $a,b \in \Lambda$ homogeneous;
        \item $d^2=[c,\cdot]$;
        \item $d(c)=0$.
    \end{enumerate}
\end{defn}

We note that the above commutator is the graded version so that $[a,b] = ab - (-1)^{\lvert a \rvert \lvert b \rvert}ba$ for homogeneous elements $a,b$.

\begin{defn}
    A morphism of cdgas $(\Lambda,d,c),(\Omega,d',c')$ is a pair $(\phi,a)$ where $\phi:\Lambda\rightarrow\Omega$ is a morphism of graded algebras and $a$ is an element of $ \Omega^1$ such that
    \begin{enumerate}
        \item $d'(\phi(x))=f(dx)+[a,f(x)]$ for all $x\in\Lambda$; and
        \item $c'=f(c)+d'a-a^2$.
    \end{enumerate}
\end{defn}

We say that a cdga is Koszul if its underlying algebra is Koszul.

\begin{thm}\label{thm:cdga}
    Let $U$ be a non-homogeneous Koszul $\rin$-algebra, with defining ideal $J$. Let $R=J\cap \Fi_2(E)$, $Q=p_2(R)$, $A=T_\rin(E)/(Q)=\gr(U)$, and $\alpha,\beta$ the maps defined in \eqref{eq:R}. Then $(A^!,\alpha^\ast,\beta)$ is a cdga over $\rin$. 
\end{thm}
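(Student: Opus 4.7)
The plan is to construct the derivation $d$ extending $\alpha^*$, to set the curvature to be $c := \beta^*(1) \in (A^!)^2$, and then to verify each of the three cdga axioms by dualizing one of the Braverman-Gaitsgory conditions $(1)$, $(2)$, $(3)$ in turn. Set $V := E^*$, so that $A^! = T_k(V)/(Q^\perp)$, and use the identification $(A^!)^2 = V \otimes_k V / Q^\perp \cong Q^*$ from Lemma~\ref{lem:!A!} to regard $\alpha^*$ as a map $V \to (A^!)^2$.

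To build $d$, the idea is to choose a $k$-bilinear lift $\tilde\alpha \colon V \to V \otimes_k V$ of $\alpha^*$, extend to a degree $1$ derivation $\tilde d$ on $T_k(V)$ by the graded Leibniz rule, and show $\tilde d$ descends to a derivation $d$ on $A^!$. Because $\tilde d$ is already a derivation, descent reduces to the inclusion $\tilde d(Q^\perp) \subseteq Q^\perp \otimes_k V + V \otimes_k Q^\perp$ in $V \otimes_k V \otimes_k V$. Taking perpendiculars under the natural pairings from \eqref{eq:dualtensorsemisimple}, and using $(Q^\perp)^\perp = Q$, this is equivalent to asking that the dual of $\tilde d$ on $V \otimes_k V$ carry $(Q \otimes_k E) \cap (E \otimes_k Q)$ into $Q$. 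Because the dual of $\tilde\alpha$ restricts to $\alpha$ on $Q$, a direct calculation identifies this dual, up to a sign from the graded Leibniz rule, with $\alpha \otimes \id - \id \otimes \alpha$; descent is then exactly condition~$(1)$. Independence of the lift is routine, since two lifts differ by a $k$-bilinear map $V \to Q^\perp$ whose derivation extension lands in the ideal generated by $Q^\perp$.

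Axiom~(i), the graded Leibniz rule, is built in. For (ii), both $d^2$ and $[c, \cdot]$ are degree $2$ derivations of $A^!$, so it suffices to compare them on $V = (A^!)^1$. Dualizing $d^2|_V \colon V \to (A^!)^3$ against $((A^!)^3)^* \cong (Q \otimes_k E) \cap (E \otimes_k Q)$ produces the composition $\alpha \circ (\alpha \otimes \id - \id \otimes \alpha)$. On the other hand, since $|c||f| = 2$ we have $[c,f] = cf - fc$; combined with the identifications $c\,f \leftrightarrow (\beta \otimes \id)^*(f)$ and $f\,c \leftrightarrow (\id \otimes \beta)^*(f)$, this gives $\beta \otimes \id - \id \otimes \beta$ as the dual of $[c, \cdot]|_V$. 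Their equality is condition~$(2)$. For (iii), the element $d(c) \in (A^!)^3$ dualizes to the map $x \mapsto \beta\bigl((\alpha \otimes \id - \id \otimes \alpha)(x)\bigr)$ on $(Q \otimes_k E) \cap (E \otimes_k Q)$, which vanishes by condition~$(3)$.

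The main technical obstacle I anticipate is the sign and bimodule bookkeeping. Because $k$ is merely a semisimple (noncommutative) ring, the right dual $E^* = \Hom_{-k}(E,k)$ and the left dual ${}^* E$ are in general distinct as $k$-bimodules, and the pairings in \eqref{eq:dualtensorsemisimple} involve order reversal of tensor factors. One must check that, with the right duals used throughout (as required since $A^! = T_k(E^*)/(Q^\perp)$) and with the Koszul signs from the graded Leibniz rule, the dual maps are genuinely the exact combinations of $\alpha, \beta$ in conditions $(1)$-$(2)$-$(3)$, with no stray reversals, bimodule twists, or sign errors.
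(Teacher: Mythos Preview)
Your proposal is correct and takes essentially the same approach as the paper: invoke Theorem~\ref{thm:PBW} to obtain conditions $(1)$--$(3)$ for $\alpha,\beta$, then dualize each in turn to get the Leibniz extension, $d^2=[c,\cdot]$, and $d(c)=0$. The paper's own proof is a three-line sketch that simply asserts ``dualizing $(1)$'' gives a well-defined derivation, ``dualizing $(2)$'' gives $d^2=[\beta,\cdot]$, and ``$(3)$ implies $d(\beta)=0$''; your version spells out the lift-and-descend argument for constructing $d$ and the reduction to degree-one generators for the remaining axioms, which is exactly the content the paper leaves implicit.
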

\begin{proof}
    By Theorem \ref{thm:PBW}, the maps $\alpha$ and $\beta$ satisfy relations $(1)-(2)-(3)$. Notice that, by definition, $(A^!)^1=E^*$ and $(A^!)^2=Q^*$. Hence, $\beta \colon Q \to \rin$ is an element in $Q^* = (A^!)^2$ and $\alpha^*:(A^!)^1\rightarrow (A^!)^2$. The cdga structure on $A^!$ then follows from dualizing the properties $(1)$-$(3)$ of Theorem \ref{thm:PBW}. From $(1)$, the map $d:=\alpha^*$ has a well defined extension to a $\rin$-bilinear degree $1$ derivation on $A^!$ by the Leibniz rule. Dualizing $(2)$, we get that $d^2=[\beta,\cdot]$. Finally, $(3)$ implies that $d(\beta)=0$.
\end{proof}

\begin{rem}
    Note that $c$ is graded central if and only if $d^2 = 0$ on $A^!$. In particular, if $U$ is non-homogeneous Koszul, then $c$ is graded central if $d = 0$ or, equivalently, if $\alpha= 0$. 
\end{rem}

Let $\mathsf{NHKA}$ be the category of non-homogeneous Koszul algebras, where morphisms are filtered morphisms of $\rin$-algebras, and $\mathsf{CDGKA}$ be the category of cdgas whose underlying algebra is Koszul, with cdga morphisms. Koszul duality can be upgraded to a contravariant functor $\mathsf{NHKA}\rightarrow\mathsf{CDGKA}$ by defining it on morphisms using the natural isomorphisms
$$\{f\in \Hom_\rin(\rin\oplus E_1,\rin\oplus E_2) \,|\, f|_{\rin}=\id \}\cong \Hom_{\rin}(E_1,E_2)\oplus E_1^*. $$

\begin{thm}\label{thm:equivalenceoffilteredcurved}
    The Koszul duality functor is an antiequivalence of categories $\mathsf{NHKA}\iso\mathsf{CDGKA}$.
\end{thm}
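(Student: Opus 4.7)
The plan is to prove the antiequivalence by constructing an explicit inverse functor $\mathsf{CDGKA} \to \mathsf{NHKA}$ and producing natural isomorphisms of the two compositions with the identity functors. On objects, given $(\Lambda, d, c) \in \mathsf{CDGKA}$, I would first form the left Koszul dual $A := {}^!\Lambda$, which is Koszul by the duality of Koszul algebras. Writing $A = T_\rin(E)/(Q)$ with $E = A^1$ and $Q \subset E \otimes_\rin E$, Lemma \ref{lem:!A!} applied to $A$ gives canonical identifications $\Lambda^1 = (A^!)^1 = E^*$ and $\Lambda^2 = (A^!)^2 = Q^*$. Dualising $d|_{\Lambda^1} \colon \Lambda^1 \to \Lambda^2$ yields a $\rin$-bilinear map $\alpha \colon Q \to E$, while $c \in \Lambda^2 = Q^*$ is itself a $\rin$-bilinear map $\beta \colon Q \to \rin$. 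Set $R := \{ x + \alpha(x) + \beta(x) \mid x \in Q \} \subset \mb{F}_2 T_\rin(E)$ and $U := T_\rin(E)/(R)$.

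The next step is to verify $U \in \mathsf{NHKA}$. The key observation is that the three cdga axioms on $(\Lambda, d, c)$ dualise precisely to the PBW conditions (1)--(3) of Theorem \ref{thm:PBW} on the pair $(\alpha, \beta)$. Specifically: the requirement that the Leibniz extension of $d|_{\Lambda^1}$ be well-defined on the quotient by the relations dualises to condition (1); evaluating $d^2 = [c, -]$ on degree-one generators dualises to condition (2); and $d(c) = 0$ dualises directly to condition (3). Because $A$ is Koszul by construction, the implication \three $\Rightarrow$ (i) of Theorem \ref{thm:PBW} applies and guarantees that $R$ is of PBW type and $\gr U \cong A$, so $U$ is non-homogeneous Koszul.

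For morphisms, a filtered $\rin$-algebra map $f \colon U_1 \to U_2$ is determined by its restriction to $\mb{F}_1 U_1 = \rin \oplus E_1$, which decomposes under the canonical isomorphism
\[
\{\, f \in \Hom_\rin(\rin \oplus E_1, \rin \oplus E_2) \mid f|_\rin = \id \,\} \cong \Hom_\rin(E_1, E_2) \oplus E_1^*
\]
into a linear part and a scalar correction. Dualising the linear part and interpreting the scalar part as an element of $\Lambda_1^1 = E_1^*$ yields a pair $(\phi, a)$, and one checks that the filtered algebra compatibility for $f$ translates exactly to the cdga morphism axioms, by the same type of dualisation argument used on objects. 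Naturality of the two compositions with the identity then follows from the biduality ${}^!(A^!) \cong A$ and $(\Lambda^!)^! \cong \Lambda$ of Koszul algebras, together with the canonical double-dual isomorphisms $\varphi, \widetilde{\varphi}$ for finitely generated $\rin$-bimodules, which also ensures that the pair $(\alpha, \beta)$ recovered from $(d, c)$ is the original one.

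The main obstacle is the careful bookkeeping for the dualisation dictionary: identifying $((E \otimes_\rin Q) \cap (Q \otimes_\rin E))^*$ with the appropriate quotient of $(E^*)^{\otimes 3}$, tracking graded signs in the cdga axioms, and checking that the various natural maps (evaluation, coevaluation, the isomorphisms of Proposition \ref{prop:gradedkhom}) behave coherently over the semisimple base $\rin$. Once this dictionary is set up, each of the four pieces -- objects, morphisms, and the two natural isomorphisms -- reduces to a routine verification using the graded Koszul duality of Section~2.4.
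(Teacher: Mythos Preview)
Your proposal is correct and is essentially the argument given in the references the paper cites (Positselski \cite{positselskiiNonhomogeneousQuadraticDuality1993,positselskiRelativeNonhomogeneousKoszul2021}); the paper itself does not prove this theorem but simply refers to those sources. Your sketch is exactly the expected one: invert Theorem~\ref{thm:cdga} by dualising the cdga axioms back to the PBW conditions (1)--(3), then invoke the implication \three$\Rightarrow$(i) of Theorem~\ref{thm:PBW} (valid since $A={}^!\Lambda$ is Koszul) to conclude $\gr U\cong A$ and hence $U\in\mathsf{NHKA}$, with the morphism-level correspondence coming from the displayed bijection $\Hom_\rin(E_1,E_2)\oplus E_1^*$.
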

\begin{proof}
    This is proved in \cite[Section 3.3]{positselskiiNonhomogeneousQuadraticDuality1993}. See also \cite[Corollary~4.20]{positselskiRelativeNonhomogeneousKoszul2021}. 
\end{proof}

\begin{remark}

While in homogeneous Koszul duality the objects on both sides are the same, namely quadratic algebras, in non-homogeneous Koszul duality the symmetry breaks down. In fact, the objects on the two sides live in very different categories. This is because non-homogeneous Koszul duality is actually a special instance of a wider phenomenon. 

The more general form of Koszul duality is between differential graded algebras and curved differential graded coalgebras. See, for example, \cite{positselskiDifferentialGradedKoszul2023}, or even more generally \cite{booth2024globalkoszulduality}. From this more general point of view, the restriction to Koszul algebras means that the Koszul dual admits a particularly nice presentation as the non-homogeneous quadratic dual.
\end{remark}

\begin{example}
    The simplest example is taking $U=A$ to be a quadratic algebra. Then $\alpha=\beta=0$ and the differential and curvature vanish, leaving just the graded algebra $A^!$. This is the usual Koszul duality.
\end{example}

\section{Motivating Examples}\label{sec:examples}

\subsection{Basic examples}

\begin{example}
    Take $\rin$ to be a field $\Bbbk$ and $E = V$ a finite-dimensional $\Bbbk$-vector space. If $V$ has a symplectic form $\omega \in \Lambda^2 V^*$ then the associated Weyl algebra is defined to be $U = T_\Bbbk V/(R)$ with 
 \[
 R = \{ x \otimes y - y \otimes x - \omega(x,y) \ | \ x,y \in V \}. 
 \]
    The space of relations $R$ is of PBW type with $\gr(U)\cong\Sym(V)$, hence the Weyl algebra is non-homogeneous Koszul. Since $\alpha=0$ and $\beta=-\omega$, the Koszul dual is $\Lambda V^*$ with zero differential and curvature $-\omega\in\Lambda^2V^*$. As above, we note that $\omega$ is central.
\end{example}
\begin{example}
    Let $\rin=\Bbbk$ and $E=\mf g$ a finite-dimensional Lie algebra. The universal enveloping algebra $U=U(\mf g)$ has PBW relations $$R=\{x\ot y-y\ot x-[x,y] \ | \ x,y \in \mf g \},$$ 
    with $\gr(U(\mf g))\cong\Sym(\mf g)$. Hence $U(\mf g)$ is non-homogeneous Koszul. We have $\beta=0$ and $\alpha(x\ot y-y\ot x)=-[x,y]$. The Koszul dual of $U$ is $\Lambda \mf g^*$, with non-trivial differential $\alpha^*$ and zero curvature. That is, $A^!$ is a differential graded algebra equal to the standard cohomological complex of the Lie algebra $\mf g$. 

    More generally, we can take $\beta:\Lambda^2\mf g\rightarrow k$ a $2$-cocycle and form the deformation $U_\beta(\mathfrak{g})$ considered by Sridharan \cite{Sridharan}, with relations
    $$R_\beta=\{x\ot y-y\ot x-[x,y]-\beta(x,y) \ | \ x,y \in \mf g \}.$$
    Here the differential on $\Lambda \mf g^*$ is the same, but it no longer squares to zero. In fact, we have non-zero curvature $c = -\beta\in\Lambda^2\mf g^*$. 
\end{example}

\subsection{Koszul duality for the Symplectic Reflection Algebra}\label{sec:symplectic}

Let $(V,\omega)$ be a finite-dimensional complex symplectic vector space. Recall that an element $s\in \mathrm{Sp}(V)$ is called a \emph{symplectic reflection} if $\rk (\id-s)=2$. Let $G\subset \mathrm{Sp}(V)$ be a finite subgroup and let $\mathcal{S} \subset G$ be the symplectic reflections in $G$. We do not need to assume that $G$ is generated by $\mathcal{S}$. For $v\in V$ and $g\in G$, we denote the action $g.v$ as $v^g$. Similarly, if $f\in V^*$ we write $f^g$ for $g.f:=f(g^{-1}\boldsymbol{\cdot})$. Let $t\in\mathbb{C}$ and let $c:\mathcal{S}\rightarrow \mathbb{C}$ be a conjugation invariant function. For any $s\in\mathcal{S}$, denote by $\omega_s$ the $2$-form that is equal to $\omega$ when restricted to $\im(1-s)$ and $0$ when restricted to $\Ker(1-s)$. The symplectic reflection algebra $H_{t,c}(G)$ is the following quotient of the skew-ring $T(V)\rtimes G$
\begin{equation*}
    H_{t,c}(G):=T(V)\rtimes G /( \mathcal{R}), 
\end{equation*}
where
\begin{equation*}
    \mathcal{R} = \Span_{\Bbbk} \left\langle x\otimes y-y\otimes x-t\omega(x,y)-\sum_{s\in\mathcal{S}}c_s\omega_s(x,y)s \, \mid \, x,y\in V\right\rangle.
\end{equation*}

Let $\rin:=\mathbb{C} G$ and $E:=V\otimes_{\C} \rin$, which is a $\rin$-bimodule with actions:
\begin{equation*}
    g.(v\otimes h)=v^g\otimes gh \quad \quad (v\otimes h).g=v\otimes hg, \quad \forall v\in v, \forall g,h\in G.
\end{equation*}
Consider the tensor algebra $T_\rin(E)$. As a vector space, this is naturally isomorphic to $T(V)\otimes \rin$. The product is given by $g\cdot v=(1\ot g)\ok(v\otimes1)=1\ok g.(v\otimes 1)=1\ok(v^g\otimes g)=v^g\cdot g$, for all $v\in V$ and $g\in G$, which implies that $T_\rin(E)\cong T(V)\rtimes G$ as $\rin$-algebras. We can thus view $H_{t,c}(G)$ as the non-homogeneous quadratic algebra $T_\rin(E)/(R)$ where $R$ is the $k$-bimodule generated by $\mathcal{R}$. The quadratic part of $R$ is 
\begin{equation*}
    Q=E\wedge E:= \Span_{\Bbbk} \langle x\otimes y\otimes g-y\otimes x\otimes g \, | \, g \in G, x,y \in V \rangle,
\end{equation*}
thus the quadratic algebra associated to $R$ is $A:=T_\rin(E)/(E\wedge E)\cong \Sym(V)\rtimes G$.

\begin{thm}[{\cite[Theorem 1.3]{etingofSymplecticReflectionAlgebras2002}}]\label{thm:PBW_EG}
    The symplectic reflection algebra is a non-homogeneous Koszul algebra with
    $\gr(H_{t,c}(G))\cong \Sym(V)\rtimes G.$
\end{thm}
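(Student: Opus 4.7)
The plan is to verify the hypotheses of the Braverman--Gaitsgory PBW criterion (Theorem~\ref{thm:PBW}) for the space of relations defining $H_{t,c}(G)$. Concretely, one must check: (a) the associated quadratic algebra $A=\Sym(V)\rtimes G$ is Koszul, and (b) the maps $\alpha,\beta$ extracted from the relations satisfy conditions (1)--(3) of Theorem~\ref{thm:PBW}. Once these are in place, Theorem~\ref{thm:PBW} will simultaneously give both non-homogeneous Koszulity and the PBW isomorphism $\gr(H_{t,c}(G))\iso \Sym(V)\rtimes G$.

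For (a), I would argue that $A=\Sym(V)\rtimes G$ is Koszul over $k=\mathbb{C}G$. The graded left $A$-module $k$ admits the resolution $A\otimes_{\mathbb{C}} \Lambda^{\bullet} V \to k$ obtained by twisting the classical Koszul resolution of $\mathbb{C}$ over $\Sym(V)$ by the $G$-action; since $G$ acts by graded algebra automorphisms fixing the generators $V$, this is a projective resolution in $\Gr{A}$ and each term is generated in the correct degree, so Definition~\ref{defn:Koszulalgebra} is satisfied. (Equivalently, one can observe that $\Sym(V)\rtimes G$ is the tensor algebra $T_{k}(E)$ modulo the quadratic ideal generated by $Q=E\wedge E$, and use that smash products with finite groups over a semisimple base preserve Koszulity.)

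For (b), the crucial observation is that the relations of $H_{t,c}(G)$ contain no linear part in $E$: writing a relation as $z+\alpha(z)+\beta(z)$ with $z=x\otimes y-y\otimes x\in Q$, we have $\alpha=0$ and
\[
\beta(x\otimes y-y\otimes x)\;=\;-t\,\omega(x,y)-\sum_{s\in\mathcal{S}}c_s\,\omega_s(x,y)\,s\;\in\;\mathbb{C}G=k.
\]
Conditions (1) and (3) of Theorem~\ref{thm:PBW} hold trivially because $\alpha=0$, and condition (2) reduces to the identity
\[
\bigl(\beta\otimes\id-\id\otimes\beta\bigr)(z)=0\quad\text{for all }z\in (Q\otimes_k E)\cap (E\otimes_k Q).
\]
This is a Jacobi-type identity in $E\otimes_k E\otimes_k E$, to be checked inside $T_k(E)\cong T(V)\rtimes G$. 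The main step will be to describe $(Q\otimes_k E)\cap (E\otimes_k Q)$: using $E=V\otimes_{\mathbb{C}} k$, it is spanned by the $k$-bimodule elements coming from antisymmetrized tensors in $V^{\otimes 3}$ multiplied on the right by group elements, so the verification reduces to an identity in $V^{\otimes 3}\otimes \mathbb{C}G$.

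The main obstacle is precisely this last verification: one must show, for all $x,y,z\in V$, that the cyclic sum
\[
\sum_{\text{cyc}}\Bigl(t\,\omega(x,y)\,z+\sum_{s\in\mathcal{S}} c_s\,\omega_s(x,y)\,s\cdot z-t\,\omega(y,z)\,x-\sum_{s\in\mathcal{S}} c_s\,\omega_s(y,z)\,s\cdot x\Bigr)
\]
vanishes in $E\otimes_k k\oplus k\otimes_k E\cong E$, when evaluated on a basis of $(Q\otimes_k E)\cap (E\otimes_k Q)$. The term with $t$ is immediate from cyclicity of $\omega$ together with the fact that $z^g=z$ modulo the $G$-action passing through. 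The genuinely non-trivial part is the cancellation of the $\omega_s$-terms: here one uses crucially that $\mathrm{rk}(1-s)=2$, so that $\omega_s$ is a rank-two form supported on $\im(1-s)$ and vanishing on $\ker(1-s)$, together with the invariance of $c$ under conjugation. This is exactly the content of the original Etingof--Ginzburg computation~\cite[Thm.~1.3]{etingofSymplecticReflectionAlgebras2002}, which we may invoke directly; alternatively, one may reproduce it by decomposing $V=\im(1-s)\oplus\ker(1-s)$ for each $s\in\mathcal{S}$ and checking the identity on each summand. Once this Jacobi identity is established, Theorem~\ref{thm:PBW} applies and delivers both claims.
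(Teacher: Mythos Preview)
Your proposal is correct and follows essentially the same approach as the paper: verify that $A=\Sym(V)\rtimes G$ is Koszul over $k=\mathbb{C}G$, then apply Theorem~\ref{thm:PBW} by checking condition~(iii), citing \cite[Theorem~1.3]{etingofSymplecticReflectionAlgebras2002} for the Jacobi-type identity. Your write-up simply unpacks in more detail what that verification entails (in particular that $\alpha=0$ makes conditions~(1) and~(3) trivial and reduces~(2) to the identity $\beta\otimes\id=\id\otimes\beta$ on $(Q\otimes_k E)\cap(E\otimes_k Q)$), whereas the paper's proof is a two-line citation.
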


\begin{proof}
    The algebra $A=\Sym(V)\rtimes G$ is Koszul. It suffices then to prove that $R$ satisfies $(iii)$ of Theorem \ref{thm:PBW}, which was proved in \cite[Theorem 1.3]{etingofSymplecticReflectionAlgebras2002}. 
\end{proof}

Consider now the $\rin$-bimodule $E^*$. As a vector space, $E^*\cong \rin\otimes V^* $, because any right $k$-linear map from $E\cong V\ot k$ is uniquely determined by its value on $V$. Explicitly, 
\begin{equation*}
    (g\otimes f)(v\ot 1):=gf(v) \quad \quad g\in G, f\in V^*, v\in V.
\end{equation*}
Under this isomorphism, the canonical $\rin$-bimodule structure on $E^*\cong k\ot V^*$ becomes
\begin{equation*}
    (a.(g\otimes f))(v\otimes1)=agf(v)=(ag\otimes f)(v\otimes 1), 
\end{equation*}
\begin{equation*}
    ((g\otimes f).a)(v\otimes1)=gf(a.v\otimes1)=gf(v^a\otimes a)=(ga)f(v^a)=(ga\otimes f^{a^{-1}})(v\otimes1).
\end{equation*}
In particular, the product in $T_\rin(E^*)$ is given by
$$a\cdot f=a\ok (1\ot f)=1\ok a.(1\ot f)=a \ot f=(a\ot f)\ok 1= (1\ot f^a)\ok a=f^a\cdot a. 
$$
This implies that $T_\rin({E}^*)\cong G\ltimes T(V^*)$, as $\rin$-algebras.

\begin{rem}
    Under this identification, one should write an element in $E^*$ as a linear combination of elements in $(V^{\otimes n})^*$ with coefficients in $\rin$, \emph{with the coefficients on the left}.
\end{rem}

\begin{prop}\label{prop:KosuzldualSRA}
    The Koszul dual to the symplectic reflection algebra $H_{t,c}$ is 
    \begin{equation*}
        A^!:= G \ltimes \left( \bigwedge V^* \right),
    \end{equation*}
    whose cdga structure has trivial differential and curvature $c=-\kappa\in (E\wedge E)^*$ defined as
    \begin{equation*}
        \kappa:=t\omega+\sum_{s\in \mathcal{S}}c_s s\omega_s.
    \end{equation*}
\end{prop}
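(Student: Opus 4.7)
The plan is to apply Theorem~\ref{thm:cdga} to the given presentation of $H_{t,c}(G)$. The associated graded $A = \Sym(V)\rtimes G$ is Koszul by Theorem~\ref{thm:PBW_EG}, so we only need to read off the maps $\alpha\colon Q\to E$ and $\beta\colon Q\to k$ from the relations $R$ and then identify the underlying algebra $A^!$. Since the defining relation
\[
x\otimes y - y\otimes x - t\omega(x,y) - \sum_{s\in\mathcal{S}} c_s\omega_s(x,y)\, s
\]
has no linear part in $E$, we obtain $\alpha = 0$, and therefore $d=\alpha^\ast = 0$ on $A^!$. The scalar part supplies the curvature on generators via
\[
\beta\bigl((x\otimes 1)\ok(y\otimes 1) - (y\otimes 1)\ok(x\otimes 1)\bigr) = -t\omega(x,y) - \sum_{s\in\mathcal{S}} c_s\omega_s(x,y)\, s.
\]

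Next I would pin down $A^! = T_k(E^\ast)/(Q^\perp)$ as an algebra. From the excerpt, $T_k(E^\ast)\cong G\ltimes T(V^\ast)$, so the task reduces to describing $Q^\perp\subseteq E^\ast\ok E^\ast$. Writing a general degree-two element as $g\otimes h_1\otimes h_2\in k\otimes V^\ast\otimes V^\ast$, a direct calculation with the pairing \eqref{eq:dualtensorsemisimple} and the bimodule structure on $E^\ast$ yields
\[
(g\otimes h_1\otimes h_2)\bigl((v\otimes 1)\ok(w\otimes 1)\bigr) = h_2(v)\,h_1(w)\,g,
\]
the $G$-twist cancelling via the identity $h^g(v^g) = h(v)$. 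Evaluating on the antisymmetrisation $(v\otimes 1)\ok(w\otimes 1) - (w\otimes 1)\ok(v\otimes 1)$ and collecting by group element shows that an element lies in $Q^\perp$ if and only if the $V^\ast\otimes V^\ast$-coefficient of each $g\in G$ is a symmetric tensor. Hence $Q^\perp = k\otimes \Sym^2(V^\ast)$, and since $G$ preserves $\Sym^2 V^\ast$ inside $V^\ast\otimes V^\ast$, the quotient is
\[
A^! \cong G\ltimes\bigl(T(V^\ast)/(\Sym^2 V^\ast)\bigr) = G\ltimes \bigwedge V^\ast,
\]
so $(A^!)^2 = k\otimes \Lambda^2 V^\ast$.

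Finally, I would match $\beta$ against $-\kappa$. Under the identification $Q^\ast \cong k\otimes\Lambda^2 V^\ast$ just established, an element $g\otimes\omega$ pairs with $(v\otimes 1)\ok(w\otimes 1) - (w\otimes 1)\ok(v\otimes 1)$ to return (a fixed nonzero multiple of) $\omega(v,w)\, g$. Expanding the formula for $\beta$ above term by term then directly identifies it with $-t\,(1\otimes\omega) - \sum_{s\in\mathcal{S}} c_s\,(s\otimes\omega_s) = -\kappa$, as claimed.

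The only real technical difficulty is the pairing computation for $Q^\perp$. One must keep careful track of left versus right $k$-linearity, of the smash-product relation $fg = g\cdot f^{g^{-1}}$ inside $G\ltimes T(V^\ast)$, and of the twist in the $k$-bimodule structure on $E^\ast$. Once the clean formula $(g\otimes h_1\otimes h_2)\bigl((v\otimes 1)\ok(w\otimes 1)\bigr) = h_2(v)h_1(w)g$ is in hand, the identification of $Q^\perp$, and hence of $A^!$, follows at once.
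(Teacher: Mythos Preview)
Your proposal is correct and follows essentially the same approach as the paper's proof: both apply Theorem~\ref{thm:cdga}, read off $\alpha=0$ and $\beta=-\kappa$ from the absence of a linear part in the relations, and identify $Q^\perp$ with $G\ltimes \Sym^2(V^*)$ by using right $k$-linearity to reduce the computation to elements of the form $(v\otimes 1)\ok(w\otimes 1)-(w\otimes 1)\ok(v\otimes 1)$. Your write-up is in fact more detailed than the paper's, which simply asserts $Q^\perp=G\ltimes\Span_\C\langle f\ot g+g\ot f\rangle$ without spelling out the pairing.
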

\begin{proof}
    Let $f\in (E\ok E)^*$, and suppose $f(E\wedge E)=0$. Since $f$ is right $\rin$-linear, this means that $f(x\otimes y\otimes 1-y\otimes x\otimes 1)=0$, for all $x,y\in V$. Thus $Q^\perp=G\ltimes\Span_\C \langle f\ot g+g\ot f \ | \ f,g \in V^*\rangle $. Hence $$A^!=G\ltimes T(V^*)/({Q}^\perp )\cong G\ltimes\left( \bigwedge V^* \right) .$$ 
    The cdga structure follows from Theorem \ref{thm:cdga} since, in our case, $\alpha=0$ and $\beta=-\kappa$. 
\end{proof}

\subsection{Graded Hecke algebras}

We can generalize the example of symplectic reflection algebras by considering \textit{graded Hecke algebras}, as introduced by Drinfeld \cite{DrinfeldGradedHecke}. Namely, given a finite-dimensional $\Bbbk$-vector space $V$ and finite group $G \subset GL(V)$ such that $\Bbbk G$ is semisimple, we pick $a_g \colon V \times V \to \Bbbk$ skew-symmetric, set $\rin = \Bbbk G$ and $E = V \ot_{\Bbbk} \rin$ as above. Then $U$ is the non-homogeneous quadratic algebra $T_{\rin} E / ( R)$, where $R$ is the $\rin$-sub-bimodule of $E \ot_{\rin} E$ generated by 
\[
\Span_{\Bbbk} \left\langle x\otimes y-y\otimes x- \sum_{g \in G} a_g(x,y) g \, |\, x,y\in V\right\rangle.
\]
As in the case of symplectic reflection algebras, the homogeneous quadratic algebra associated to $R$ is $G \ltimes \Sym(V)$, which is Koszul. Drinfeld \cite{DrinfeldGradedHecke} gave a criterion for when $U$ satisfies the PBW property; see \cite[Theorem~1.9]{RamShepler} for a more complete statement and full proof. We note that the PBW property forces $a_g = 0$ for all $g$ such that $\mathrm{codim}_V V^g \neq 2$. Just as in Proposition~\ref{prop:KosuzldualSRA}, when the PBW property holds, and thus $U$ is non-homogeneous Koszul, the Koszul dual equals $\left( \bigwedge V^* \right) \rtimes G$ as an algebra and $c = -\sum_{g \in G} g a_g$. The differential is trivial. 

In \cite{LusztigGradedHecke}, Lusztig introduced a graded version of the affine Hecke algebra associated to any Weyl group $G$ (often called the degenerate affine Hecke algebra in the literature). The usual presentation of these algebras does not realise them as non-homogeneous quadratic algebras. However, it is shown in \cite[Theorem~3.5]{RamShepler} that they are isomorphic to graded Hecke algebras in the above sense, where $V$ is taken to be the reflection representation for $G$. Therefore, it is possible to apply Koszul duality to understand the derived category of modules over these graded Hecke algebras. 

In a series of papers, including \cite{SW1,SW2,SW3,SW4,SW5} and most comprehensively \cite{sheplerDeformationTheoryHopf2025}, Shepler, Witherspoon and coauthors have generalized Drinfeld's deformations of skew group rings to PBW deformations of smash products $S \# H$, where $S$ is a Koszul algebra acted on by a Hopf algebra $H$. Assuming that $H$ is finite-dimensional and semisimple, we expect that their construction gives rise to a very broad class of examples of non-homogeneous Koszul algebras. It would be interesting to compute the Koszul dual in this generality. 

\subsection{The preprojective algebra}

Let $\Qu$ be a finite connected quiver whose underlying graph is not of finite ADE type. If $a \in \Qu_1$ is an arrow, then $t(a), h(a) \in \Qu_0$ denote the tail and head of $a$, respectively. We compose arrows as functions. Thus, $a b = 0$ unless $h(b) = t(a)$. Let $\overline{\Qu} = \Qu \cup \Qu^{\op}$ be the doubled quiver. For each $a \in \Qu_1$, there is a (unique) opposite arrow $a^* \in \Qu_1^{\op}$. Let $\rin = \bigoplus_{i \in \Qu_0} \Bbbk e_i$ be the vertex subalgebra of the path algebra of $\overline{\Qu}$ with $e_i e_j = \delta_{i,j} e_i$ and 
\[
E = \bigoplus_{a \in \overline{\Qu}} \Bbbk a = \left( \bigoplus_{a \in \Qu_1} \Bbbk a \right) \oplus \left( \bigoplus_{a^* \in \Qu_1^{\op}} \Bbbk a^* \right) = D \oplus D^*,
\]
where $E,D,D^*$ are $\rin$-bimodules. Let $\mathcal{R}$ be the one-dimensional $\Bbbk$-vector space spanned by 
\[
\sum_{a \in \Qu_1} [a^*,a] - \sum_{i \in \Qu_0} \lambda_i e_i. 
\]
Then the sub-$k$-bimodule of $E \otimes_k E$ generated by $\mathcal{R}$ is 
\[R:=
\Span_{\Bbbk} \left\langle \sum_{a \in \Qu_1, t(a) = i}\!\!\!\!\!\!\! a^* a -\!\!\!\! \sum_{a \in \Qu_1, h(a) = i}\!\!\!\!\!\!\! a a^* - \lambda_i e_i \ | \, i \in \Qu_0 \right\rangle
\]
and the (deformed) preprojective algebra is defined to be 
\[
\Pi^{\lambda}(\Qu) = T_\rin E / (R). 
\]
The undeformed preprojective algebra is $\N$-graded, with $\deg e_i = 0$ and $\deg a = \deg a^* = 1$. 

\begin{prop}
   Assume that $\Qu$ is not of finite ADE type. The (deformed) preprojective algebra is non-homogeneous Koszul and $\gr(\Pi^\lambda(\Qu))=\Pi^0(\Qu)$.
\end{prop}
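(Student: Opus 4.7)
My plan is to apply the PBW criterion of Theorem \ref{thm:PBW} to the space of relations $R$ defining $\Pi^\lambda(\Qu)$. By construction, $Q = p_2(R)$ is spanned by the commutator-type relations $q_i := \sum_{t(a) = i} a^* a - \sum_{h(a) = i} a a^*$, so the associated quadratic algebra is $T_\rin E/(Q) = \Pi^0(\Qu)$. For $\Qu$ connected and not of finite ADE type, $\Pi^0(\Qu)$ is Koszul; this is a well-known result due essentially to Martinez-Villa. Under this Koszulity hypothesis, Theorem \ref{thm:PBW} reduces the problem to checking condition (iii); this will simultaneously yield the PBW isomorphism $\gr \Pi^\lambda(\Qu) \cong \Pi^0(\Qu)$ and the non-homogeneous Koszul property.

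The defining relations contain no linear term, so the map $\alpha \colon Q \to E$ is identically zero, while $\beta \colon Q \to \rin$ is determined on generators by $\beta(q_i) = -\lambda_i e_i$. With $\alpha = 0$, conditions (1) and (3) of Theorem \ref{thm:PBW} are automatic, and condition (2) reduces to verifying the identity $(\beta \otimes \mathrm{id})(x) = (\mathrm{id} \otimes \beta)(x)$ for every $x \in (Q \otimes_\rin E) \cap (E \otimes_\rin Q)$.

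The crux is therefore to analyse this intersection. I would parametrize a general element of $Q \otimes_\rin E$ as $x = \sum_b c_b\, q_{h(b)} \otimes b$, with $b$ running over $\overline{\Qu}_1$, and similarly an element of $E \otimes_\rin Q$ as $y = \sum_b d_b\, b \otimes q_{t(b)}$. Expanding both in the $\Bbbk$-basis of $E^{\otimes 3}$ indexed by composable triples and matching coefficients, a routine bookkeeping shows that $c_b$ can be nonzero only when $\bar b$ is the sole arrow of $\overline{\Qu}$ starting at $h(b)$, and symmetrically $d_b$ can be nonzero only when $b$ itself is the sole arrow of $\overline{\Qu}$ starting at $t(b)$. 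Both requirements together force $b$ to be an edge of $\Qu$ joining two leaves of the underlying graph, which in a connected graph only occurs for the single-edge graph of type $A_2$. Since $\Qu$ is not Dynkin no such arrow exists, hence $(Q \otimes_\rin E) \cap (E \otimes_\rin Q) = 0$, condition (2) is vacuous, and Theorem \ref{thm:PBW} yields the claim.

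The main obstacle is the combinatorial vanishing of this intersection; the argument is elementary but requires care with signs coming from $\epsilon(\bar a) = -\epsilon(a)$ and with the bimodule conditions $h(b) = t(b_2)$ that pin down which triples contribute. A less self-contained alternative would be to invoke Crawley-Boevey's direct proof of the PBW property for $\Pi^\lambda(\Qu)$ in \cite{crawley-boeveyDeformedPreprojectiveAlgebras2022}, combined with the Koszulity of $\Pi^0(\Qu)$.
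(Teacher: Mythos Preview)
Your proof is correct and takes a more self-contained route than the paper. The paper simply cites two external facts: Koszulity of $\Pi^0(\Qu)$ from \cite{etingofKoszulityHilbertSeries2007} and the PBW isomorphism $\gr\Pi^\lambda(\Qu)\cong\Pi^0(\Qu)$ from \cite{crawley-boeveyDeformedPreprojectiveAlgebras2022}. You instead verify condition (iii) of Theorem~\ref{thm:PBW} directly, reducing to the vanishing of $(Q\otimes_\rin E)\cap(E\otimes_\rin Q)$ and establishing this by an explicit combinatorial calculation. Your argument is sound: writing $q_i=\sum_{t(c)=i}\epsilon(c)\,\bar c\otimes c$, comparing coefficients in the monomial basis of $E^{\otimes 3}$ yields $c_w=-d_w$ together with the constraint that $c_w\neq 0$ forces both $h(w)$ and $t(w)$ to have valency one in the underlying graph of $\overline{\Qu}$, which is impossible in a connected non-$A_2$ quiver. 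This vanishing is in fact equivalent, via Lemma~\ref{lem:!A!}, to the statement $(A^!)^3=0$ established later in Proposition~\ref{prop:Koszuldualpreproj}, so your computation gives an alternative, elementary proof of that degree bound as well. The trade-off: the paper's proof is a two-line citation, while yours avoids the dependence on \cite{crawley-boeveyDeformedPreprojectiveAlgebras2022} at the cost of a short combinatorial check.
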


\begin{proof}
    The quadratic algebra associated to the space of relations $R$ is the undeformed preprojective algebra $\Pi^0(\Qu)$. This is known to be Koszul, for $\Bbbk$ a field of any characteristic, when $\Qu$ is not of ADE type; see \cite{etingofKoszulityHilbertSeries2007} and the references therein. The fact that $\Pi^0(\Qu) \cong \gr(\Pi^\lambda(\Qu))$ is explained in \cite[Lemma~2,3]{crawley-boeveyDeformedPreprojectiveAlgebras2022}.
\end{proof}

In order to describe the Koszul dual of $\Pi^0(\Qu)$ we first consider $E^*$. It has a $\Bbbk$-basis $\{ \psi_a \, | \, a \in \overline{\Qu}_1 \}$, with $\psi_a(b) = e_{t(a)}$ if $b = a$ and $\psi_a(b) = 0$ otherwise. Recall that the $\rin$-bimodule structure on $E^*$ is
\[
(e_i \cdot \psi)(v) = e_i \psi(v), \quad (\psi \cdot e_j)(v) = \psi(e_j v), \quad \forall \, \psi \in E^*, v \in E, i, j \in \Qu_0. 
\]
In particular,
$$e_i \cdot \psi_a = \psi_{a e_i}, \quad \quad \psi_a \cdot e_i = \psi_{e_i a}, \quad \forall a\in\overline{\Qu}_1, \, i\in\Qu_0.$$ 
Recall formula \eqref{eq:dualtensorsemisimple} for the identification $E^*\ok E^*=(E\ok E)^*$:
\begin{align*}
    (\psi_a\ot\psi_b)(c\ot d)=\psi_b(\psi_a(c).d)=\psi_b(\delta_{a,c}e_{t(a)}d).
\end{align*}
So, if the path $ab$ is non-zero, $(\psi_a\ot\psi_b)(c\ot d)=\delta_{a,c}\delta_{b,d}e_{t(b)}$. For every non-zero path $\pi=a_n\ot\cdots\ot a_1$, we have an element $\psi_\pi$ of $(T_\rin E)^*$ given by $\psi_\pi:=\psi_{a_n}\ot\cdots\ot\psi_{a_1}$, such that $\psi_\pi(x)=e_{t(\pi)}$ if $\pi=x$, and $\psi_\pi(x)=0$ otherwise.

\begin{prop}\label{prop:Koszuldualpreproj}
    The Koszul dual $\Pi^0(\Qu)^!$ is the quotient of $T_\rin E^*$ by the quadratic relations:
    \begin{enumerate}
        \item[(R1)] $\psi_a \ot \psi_b, \psi_{a^*} \ot \psi_{b^*}$ for $a,b \in \Qu_1$;
        \item[(R2)] $\psi_a \ot \psi_{b^*},\psi_{a^*} \ot \psi_b$ for $a , b \in \Qu_1$, $a \neq b$;
        \item[(R3)] $\psi_a \ot \psi_{a^*} - \psi_{b} \ot \psi_{b^*}$ for $a,b \in \Qu_1$, $t(a) = t(b)$;
        \item[(R4)] $\psi_{a^*} \ot \psi_{a} - \psi_{b^*} \ot \psi_{b}$ for $a,b \in \Qu_1$, $h(a) = h(b)$;
        \item[(R5)] $\psi_a \ot \psi_{a^*} + \psi_{b^*} \ot \psi_{b}$ for $a,b \in \Qu_1$, $t(a) = h(b)$.
    \end{enumerate}
    As $\rin$-bimodules, $(A^!)^0 = \rin, (A^!)^1 = E^*_k, (A^!)^2 = \rin$ and $(A^!)^i = 0$ otherwise. 
\end{prop}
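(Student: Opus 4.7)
The plan is to compute $Q = p_2(R) \subset E \otimes_k E$ and its annihilator $Q^\perp \subset E^* \otimes_k E^*$ directly, and then use Proposition~\ref{prop:koszuldualext} to handle the higher graded pieces. Since $e_i \cdot r \cdot e_j = 0$ for $i \neq j$, the $k$-bimodule $Q$ is spanned by
\[
q_i := e_i \cdot r \cdot e_i = \sum_{a \in \Qu_1,\, t(a) = i} a^* \otimes a - \sum_{a \in \Qu_1,\, h(a) = i} a \otimes a^*, \qquad i \in \Qu_0,
\]
each lying in $e_i (E \otimes_k E) e_i$. These elements are linearly independent, so $Q \cong k$ as $k$-bimodules via $q_i \mapsto e_i$. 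Combined with Lemma~\ref{lem:!A!}, this will yield $(A^!)^2 = Q^* \cong k^* \cong k$ as $k$-bimodules once the quadratic presentation is established.

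Next I would apply formula~\eqref{eq:dualtensorsemisimple} to the basis elements $\psi_x \otimes \psi_y$ of $E^* \otimes_k E^*$ and pair them against each $q_i$. A direct calculation shows $(\psi_x \otimes \psi_y)(q_i) = 0$ unless $\{x,y\} = \{a, a^*\}$ for some $a \in \Qu_1$, with non-trivial pairings
\[
(\psi_a \otimes \psi_{a^*})(q_i) = \delta_{t(a), i}\, e_i, \qquad (\psi_{a^*} \otimes \psi_a)(q_i) = -\delta_{h(a), i}\, e_i \quad (a \in \Qu_1).
\]
Relations (R1) and (R2) are off-diagonal and so clearly lie in $Q^\perp$, while (R3)--(R5) are verified to pair to zero by the constraints $t(a) = t(b)$, $h(a) = h(b)$, and $t(a) = h(b)$, respectively. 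For the reverse inclusion, I would argue component-wise within each $e_i (E^* \otimes_k E^*) e_i$: the diagonal elements all map into the one-dimensional $\Bbbk q_i^* \subset Q^*$, so the kernel within the diagonal part has codimension one and is exactly spanned by (R3)--(R4) among elements of each fixed sign, together with a single instance of (R5) linking the two signs; meanwhile all off-diagonal basis elements (exhausted by (R1)--(R2)) lie in $Q^\perp$ automatically. A dimension count then confirms that (R1)--(R5) span $Q^\perp$.

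For the last claim, $(A^!)^0 = k$ and $(A^!)^1 = E^*$ are immediate from the definition of the quadratic dual, and $(A^!)^2 \cong k$ was computed above. For $i \geq 3$, Proposition~\ref{prop:koszuldualext} identifies $(A^!)^i$ with $\Ext_A^i(k_A, k_A)$, and since $\Pi^0(\Qu)$ has global dimension two for non-Dynkin $\Qu$ (a classical result; see, for example, \cite{crawley-boeveyDeformedPreprojectiveAlgebras2022} and references therein), these higher $\Ext$-groups vanish. The main obstacle is the middle step: formula~\eqref{eq:dualtensorsemisimple} reverses the order of the tensor factors, so one must carefully reconcile the tensor-order convention with the arrow-composition convention from the path algebra, both to isolate the correct non-trivial pairings and to get the sign in (R5) right; once this bookkeeping is set up, the remainder reduces to a routine bilinear dimension count.
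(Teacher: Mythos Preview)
Your argument is correct, but it is organized differently from the paper's. You establish directly that (R1)--(R5) span $Q^\perp$ by computing the pairing with the basis $\{q_i\}$ and doing a codimension-one count inside each $e_i(E^*\otimes_k E^*)e_i$; from this, $(A^!)^2 \cong Q^* \cong k$ drops out for free, and you then invoke the known global dimension~$2$ of $\Pi^0(\Qu)$ together with Proposition~\ref{prop:koszuldualext} to kill $(A^!)^i$ for $i\ge 3$. The paper instead only checks that (R1)--(R5) lie in $Q^\perp$, proves $(A^!)^i=0$ for $i\ge 3$ by a self-contained combinatorial manipulation of degree-three monomials using (R1)--(R5), and then pins down $\dim (A^!)^2 = |\Qu_0|$ via the $\Ext$ computation with Crawley--Boevey's resolution; the equality $\mathrm{span}(\text{R1--R5})=Q^\perp$ is then deduced a posteriori from the dimension match. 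In effect you and the paper swap which piece is handled by hand and which by an $\Ext$ appeal: your route makes the presentation and degree-two part elementary at the cost of importing the global-dimension result, while the paper's combinatorics in degree~$\ge 3$ avoid that import but need the resolution for degree~$2$. One small point of care in your write-up: your ``diagonal/off-diagonal'' dichotomy should be read as the decomposition into the pieces $e_i(E^*\otimes_k E^*)e_j$, not as the dichotomy $\{x,y\}=\{a,a^*\}$ versus otherwise, since some (R1)--(R2) elements also live in the diagonal blocks; with that reading your codimension count goes through as stated.
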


\begin{proof}
    It is easy to check that all of the above relations hold. Moreover, $(A^!)^0 = \rin, (A^!)^1 = E^*_k$ are immediate since the relations are all quadratic. If $\psi_{u} \ot \psi_{v} \ot \psi_{w}$ is a monomial of degree three, then, without loss of generality, we may assume that at least two of the three arrows $u,v,w$ belong to $\Qu_1$. Then applying (R2)--(R5), we may reorder $u,v,w$ so that $u,v \in \Qu_1$. Then (R1) implies that $\psi_{u} \ot \psi_{v} \ot \psi_{w} = 0$. We deduce that $(A^!)^i = 0$ for $i > 2$. 
    
    Assume that $\Qu$ is oriented such that there is at least one arrow $a(i)$ with $t(a(i)) = i$ for every $i \in \Qu_0$. Then it is clear from the relations that $A_2^!$ is spanned by all $\psi_{a(i)} \ot \psi_{a(i)^*}$ for $i \in \Qu_0$. Note that 
    \[
    e_j(\psi_{a} \ot \psi_{a^*} ) e_i = \psi_{ae_j} \ot \psi_{e_ia^*} = \psi_{a} \ot \psi_{a^*}
    \]
    if $i = j = t(a)$ and is zero otherwise. Therefore, it suffices to argue that $\dim_{\Bbbk} (A^!)^2 = |\Qu_0|$. By Proposition \ref{prop:koszuldualext}, it is sufficient to compute $\Ext^2_\Pi(k,k)$. We can compute $\Ext^{\idot}_{\Pi}(\rin,\rin)$ explicitly, using the standard resolution of the diagonal for $\Pi$ (see \cite[Theorem 2.7]{crawley-boeveyDeformedPreprojectiveAlgebras2022} and Remark \ref{rem:freeresolution}). We get
    $$ 0\rightarrow \Hom_\Pi(P_0,k)\rightarrow\Hom_\Pi(P_1,k)\rightarrow\Hom_\Pi(P_2,k)\rightarrow 0,$$
    where the maps are all zero, $P_0=P_2=\bigoplus_{i\in\Qu_0}\Pi e_i$, and $P_1=\bigoplus_{a\in\overline{\Qu}_1}\Pi e_{h(a)}$.
    It follows that 
    \[
    (A^!)^2 = \Hom_{\Pi}\left(\bigoplus_{i \in \Qu_0} \Pi e_i, \rin\right) \cong \rin.
    \]
    Here we have used that $\Qu$ is not of finite type. 
\end{proof}

As in the proof of Proposition~\ref{prop:Koszuldualpreproj}, we assume (without loss of generality) that $\Qu$ is oriented such that there is at least one arrow $a(i)$ with $t(a(i)) = i$ for every $i \in \Qu_0$. Then
\[
\omega_i:= \psi_{a(i)^*} \otimes \psi_{a(i)} \in E^*\ok E^*.
\]


\begin{prop}
    The Koszul dual $\Pi^\lambda(\Qu)^!$ is a cdga with differential equal to $0$ and curvature
    $$c=-\sum_{i\in\Qu_0}\lambda_i\omega_i.$$
\end{prop}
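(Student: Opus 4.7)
The plan is to apply Theorem~\ref{thm:cdga} to the concrete presentation of $\Pi^\lambda(\Qu)$. Since the underlying algebra of the Koszul dual has already been described in Proposition~\ref{prop:Koszuldualpreproj}, the task reduces to extracting the maps $\alpha, \beta$ of equation~\eqref{eq:R} from the defining relations and then dualising in the correct $k$-bimodule sense.

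Reading off $\alpha$ and $\beta$ is immediate. For each $i \in \Qu_0$, the generator of the space of relations can be written as
\[
r_i = x_i + \beta(x_i), \quad x_i := \sum_{a\in\Qu_1,\, t(a)=i} a^* a - \sum_{a\in\Qu_1,\, h(a)=i} a a^*, \quad \beta(x_i) = -\lambda_i e_i,
\]
so that $\alpha = 0$ and $\beta \colon Q \to k$ is the $k$-bilinear map determined by $x_i \mapsto -\lambda_i e_i$. Theorem~\ref{thm:cdga} then gives the differential $d = \alpha^* = 0$ at once.

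It remains to identify the element $\beta \in Q^* = (A^!)^2$ with the proposed formula. I would compute $\omega_i(x_j)$ directly from~\eqref{eq:dualtensorsemisimple}. The first factor $\psi_{a(i)^*}$ vanishes on every arrow $b \in \Qu_1$, forcing $\omega_i(b \otimes b^*) = 0$; on the other hand, a short check using $\psi_{a(i)^*}(a(i)^*) = e_{h(a(i))}$ and $\psi_{a(i)}(a(i)) = e_i$ yields $\omega_i(b^* \otimes b) = \delta_{b, a(i)}\, e_i$. Therefore $\omega_i(x_j) = \delta_{i,j}\, e_i$, which gives
\[
\Bigl(-\sum_{i \in \Qu_0} \lambda_i\, \omega_i\Bigr)(x_j) = -\lambda_j\, e_j = \beta(x_j),
\]
as required. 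The $k$-bimodule placement is consistent, since $\omega_i \in e_i\, (A^!)^2\, e_i$ and $\beta(x_i) \in e_i\, k\, e_i = \Bbbk e_i$.

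The main thing to be careful of is the convention in~\eqref{eq:dualtensorsemisimple} for the identification $E^* \otimes_k E^* \iso (E \otimes_k E)^*$, which dictates the non-obvious ordering $\psi_{a(i)^*} \otimes \psi_{a(i)}$ (with the opposite arrow listed first) in the definition of $\omega_i$; this is precisely what makes $\omega_i$ pair nontrivially with the term $a^* a$ appearing in $x_i$. Beyond this bookkeeping, the verification is a one-line pairing computation.
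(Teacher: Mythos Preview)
Your proposal is correct and follows essentially the same approach as the paper: read off $\alpha=0$ and $\beta(x_i)=-\lambda_i e_i$ from the relations, then verify $\beta=-\sum_i\lambda_i\omega_i$ by checking $\omega_i(x_j)=\delta_{i,j}e_i$ on the basis of $Q$ and invoke Theorem~\ref{thm:cdga}. The only difference is that you spell out the pairing computation in slightly more detail than the paper does.
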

\begin{proof}
In this case, $\alpha=0$ and we claim that $\beta= -\sum_{i\in\Qu_0}\lambda_i\omega_i$. The $\rin$-bimodule $Q \subset E \otimes_{\rin} E$ has $\Bbbk$-basis spanned by 
\[
x_i := \sum_{a \in \Qu_1, t(a) = i} a^* \otimes a - \sum_{a \in \Qu_1, h(a) = i} a \otimes a^* \quad \textrm{for $i \in \Qu_0$}.
\]
The map $\beta \colon Q \to \rin$ sends $x_i$ to $-\lambda_i e_i$. Since $\omega_i(x_j) = \delta_{i,j} e_i$, we have $\beta = -\sum_{i\in\Qu_0}\lambda_i\omega_i$. Hence the claim follows from Theorem \ref{thm:cdga}.   
\end{proof}

\section{Derived Equivalences}\label{sec:equivalences}

A key feature of graded Koszul duality is an equivalence of derived categories. However, the usual construction of a derived category no longer works for curved dg-algebras. Positselski introduced a replacement known as the coderived category and proved that, for non-homogeneous Koszul algebras $U$, there exists an exact equivalence of triangulated categories in very great generality. Under the hypothesis that $A$ is of finite global dimension, this coderived category admits a simple description in terms of the homotopy category of injectives. 

In this section, we prove an equivalence between the derived category of $U$ and a Verdier localisation of the homotopy category of injectives of the Koszul dual. The construction is explicit, and we can completely characterize the class of objects killed by the Verdier localization. This works without any further assumption on $U$, except Koszulness. Under the hypothesis that $A$ has finite global dimension, we prove that the localisation is trivial, recovering the result by Positselski. Our proof is more similar in spirit to \cite{floystadKoszulDualityEquivalences2005}.

\subsection{The category of curved dg-modules}\label{sec:cdga}

Throughout this section, let $\Lambda = (\Lambda,d,c)$ denote a cdga with $\Lambda^0 = k$ and $\Lambda^i = 0$ for $i < 0$; we call such a cdga \textit{connected graded}. We say that the \coconnective (curved) dg-algebra $\Lambda$ is \textit{bounded} if, in addition, there exists $\ell \ge 0$ such that $\Lambda^i = 0$ for all $i > \ell$. Let $\grLmod{\Lambda}$ denote the abelian category of graded left $\Lambda$ modules. If $a \in \Lambda^i$ then we write $|a| = i$ for the degree of $a$. 

\begin{defn}
A curved $\Lambda$ module is a (cohomologically) graded left $\Lambda$-module $M = \bigoplus_{i \in \Z} M^i$ with a degree one $k$-linear map $d_M: M\to M$ such that
\begin{itemize}
    \item $d_M^2(m) = cm\quad $ for all $m \in M$.
    \item $d_M(am) = d(a)m + (-1)^{\lvert a \rvert} a d_M(m)\quad $ for all $ a\in \Lambda$ and $m \in M$.
\end{itemize}
A morphism $f: M \to N$ of curved $\Lambda$-modules is a degree 0 map of graded $\Lambda$-modules such that $fd_M = d_Nf$. Let $C(\Lambda)$ denote the category of curved $\Lambda$ modules. 
\end{defn}

\begin{rem}
    Let $M$ be a right $\Lambda$-module. There is an analogous notion of right cdg-module. The definition is the same, except that $d^2_M(m)=-mc$.
\end{rem}

Note that $C(\Lambda)$ is an Abelian category with degreewise kernels and cokernels. Let $\Lambda^\#$ denote the underlying graded algebra of $\Lambda$ and $M^\#$ the underlying graded $\Lambda^\#$ module of a curved $\Lambda$ module~$M$. 

\begin{defn}
A morphism $f: M \to N$ in $C(\Lambda)$ is null homotopic if there is a degree $-1$ map $s: M \to N$ of graded $\Lambda$-modules such that $f^i = d_N^{i-1}s^i + s^{i+1} d^i_M$ for all $i \in \mathbb{Z}$. The homotopy category $K(\Lambda)$ of $\Lambda$ is the quotient of $C(\Lambda)$ by the ideal of null-homotopic morphisms. 
\end{defn}

\begin{rem} \label{rem:dgenhancement}
    The standard mapping cone construction makes $K(\Lambda)$ a triangulated category. Note that $H^i(\HOM_\Lambda(M,N)) = \Hom_{K(\Lambda)}(M,N[i])$. In particular, $H^0(\HOM_\Lambda(M,N)) = \Hom_{K(\Lambda)}(M,N)$ and so this provides a dg-enhancement of $K(\Lambda)$. 
\end{rem}

We let $C(\Inj \Lambda)$ denote the full subcategory of $C(\Lambda)$ consisting of modules $M$ such that $M^\#$ is injective as a graded $\Lambda^\#$ module. Similarly we let $K(\Inj \Lambda)$ denote the full subcategory of $K(\Lambda)$ 
consisting of objects that are isomorphic, in $K(\Lambda)$, to such modules. One can easily show that $C(\Lambda)$ admits enough injectives in the sense that any $M \in C(\Lambda)$ can be embedded (as a curved module) in some $I \in C(\Inj \Lambda)$. 

\begin{lem}\label{lem:Injtriangulated}
    The category $K(\Inj \Lambda)$ is a triangulated thick subcategory of $K(\Lambda)$. 
\end{lem}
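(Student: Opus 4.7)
My plan is to verify three closure properties: closure under the shift functor, closure under mapping cones, and closure under direct summands.

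Closure under shifts will be immediate: the shift functor $[1]$ on the abelian category $\grLmod{\Lambda^\#}$ is an exact autoequivalence, and therefore sends graded injectives to graded injectives. Hence if $M \in C(\Inj \Lambda)$ then $M[1]^\# = M^\#[1]$ is still injective as a graded $\Lambda^\#$-module.

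For closure under mapping cones, given a morphism $f \colon M \to N$ in $K(\Lambda)$ with $M, N \in K(\Inj \Lambda)$, I would first choose representatives in $C(\Lambda)$ whose underlying graded modules are injective. The underlying graded module of the mapping cone is $M^\#[1] \oplus N^\#$, which is a finite direct sum of injective graded modules and therefore injective; hence $\mathrm{Cone}(f) \in K(\Inj \Lambda)$.

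The nontrivial part is closure under direct summands. Suppose $I \in K(\Inj \Lambda)$ and $I \cong X \oplus Y$ in $K(\Lambda)$; the task is to produce, for $X$, a curved module $\tilde X \in C(\Inj \Lambda)$ with $\tilde X \cong X$ in $K(\Lambda)$. My plan is to exploit the fact that $C(\Lambda)$ has enough injectives of the required form: any curved module embeds into a curved module whose underlying graded module is injective (this can be proved by a small-object argument, or by hand using the cofree construction adjoint to the forgetful functor $C(\Lambda) \to \grLmod{\Lambda^\#}$ applied to an injective hull in the graded category). Combined with the retraction data, namely morphisms $s \colon X \to I$ and $p \colon I \to X$ in $K(\Lambda)$ with $p \circ s = \mathrm{id}_X$, this exhibits $X$ as a homotopy retract of the injective object $I$; using the dg-enhancement of $K(\Lambda)$ recorded in Remark~\ref{rem:dgenhancement} (so that the homotopy idempotent $s \circ p$ lifts to an actual idempotent up to coherent homotopy), one splits this idempotent to obtain the desired $\tilde X \in C(\Inj \Lambda)$.

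The main obstacle will be step three: idempotents in an arbitrary triangulated category need not split, and the decomposition $I \cong X \oplus Y$ lives in $K(\Lambda)$ rather than in $C(\Lambda)$, so it does not directly induce a decomposition of $I^\#$ as a graded module. The fix is the dg-enhancement combined with enough injectives in $C(\Lambda)$; once these are in place the argument is standard, but without them the naive approach of ``pass to underlying graded modules and take the summand'' fails.
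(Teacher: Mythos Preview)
Your treatment of closure under shifts and under mapping cones is correct and matches the paper exactly: the cone of a map $f\colon M \to N$ between objects of $C(\Inj \Lambda)$ has underlying graded module $M^\#[1] \oplus N^\#$, which is injective.

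For closure under summands, the paper simply asserts this is ``clear'' and gives no argument. You are right to flag it as the nontrivial step, and your diagnosis that one cannot just pass to underlying graded modules is correct. However, your proposed fix does not close the gap. There are two issues. First, a homotopy idempotent $e = sp$ on $I$ need not lift to a strict (or even homotopy-coherent) idempotent in the dg-enhancement; there are genuine obstructions to such rectification. Second, even granting such a lift, the standard way to split it (the telescope $\operatorname{hocolim}(I \xrightarrow{e} I \xrightarrow{e} \cdots)$) produces an object whose underlying graded module is built from a countable direct sum $\bigoplus_{\mathbb{N}} I^\#$, and countable sums of injectives are injective only under a Noetherian-type hypothesis on $\Lambda^\#$, which is not assumed in this section. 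Your invocation of ``enough injectives'' does not bridge this either: embedding $X$ into some $J \in C(\Inj \Lambda)$ gives a monomorphism in $C(\Lambda)$, not a homotopy equivalence.

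In summary: on the triangulated-subcategory part you and the paper agree; on thickness the paper glosses over the difficulty while you attempt an argument that is more honest but still incomplete. For the paper's purposes this is harmless, since only the triangulated structure of $K(\Inj A^!)$ is actually used to form the Verdier quotient $K(\Inj A^!)/\mathcal{N}$; closure under summands plays no role in the subsequent arguments.
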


\begin{proof}
The category $K(\Inj \Lambda)$ is clearly closed under shifts and summands. Given a triangle 
\[
I \to M \to J \stackrel{+}{\longrightarrow}
\]
with $I$ and $J$ in $K(\Inj \Lambda)$, we can assume that $M$ is the mapping cone of a map $J[-1] \to I$. In this case, $M^\# \simeq I^{\#} \oplus J^{\#}$. 
\end{proof}

We will use the following socle-like construction. For $M\in C(\Lambda)$, let
\[
S(M) = \HOM_{\Lambda}(\rin,M) = \{ m \in M \,  | \,  am = 0 \text{ for all $a \in \Lambda$ with } \lvert a \rvert > 0 \} \subseteq M.
\]
Note that $S(M)$ is a curved $\Lambda$ submodule of $M$ with $d^2 |_{S(M)} = 0$.

\begin{lem}
    $S$ induces a functor $K(\Inj \Lambda) \to D(k)$, which we denote by the same letter.  
\end{lem}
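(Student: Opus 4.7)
The plan is to construct the functor in three stages: first as a functor $C(\Lambda) \to C(k)$ into ordinary complexes of $k$-modules, then show it descends to the homotopy categories $K(\Lambda) \to K(k)$, and finally compose with the localisation $K(k) \to D(k)$ before restricting to the full subcategory $K(\Inj \Lambda) \subseteq K(\Lambda)$.

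First I would verify that for any $M \in C(\Lambda)$, the differential $d_M$ restricts to a $k$-linear endomorphism of $S(M)$ that genuinely squares to zero. For $m \in S(M)$ and $a \in \Lambda$ with $|a| > 0$, the Leibniz rule gives $a d_M(m) = (-1)^{|a|}\bigl(d_M(am) - d(a)m\bigr) = 0$, because both $am = 0$ (as $m \in S(M)$) and $d(a) \in \Lambda^{|a|+1}$ also has strictly positive degree, so $d(a) m = 0$ as well. Thus $d_M(S(M)) \subseteq S(M)$. Moreover, on $S(M)$ we have $d_M^2(m) = c m = 0$, since $c \in \Lambda^2$. Hence $(S(M), d_M|_{S(M)})$ is a legitimate object of $C(k)$.

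Second, functoriality on $C(\Lambda) \to C(k)$ is routine: any morphism $f \colon M \to N$ in $C(\Lambda)$ is $\Lambda$-linear of degree $0$ and commutes with the differentials, so $f$ carries $S(M)$ into $S(N)$ and the restriction is a chain map. To descend to homotopy categories, I would check that if $f \sim 0$ via a degree $-1$ map $s \colon M \to N$ of graded $\Lambda$-modules satisfying $f^i = d_N^{i-1} s^i + s^{i+1} d_M^i$, then $s(S(M)) \subseteq S(N)$. Indeed, since $s$ is $\Lambda^{\#}$-linear (up to the standard sign), for $m \in S(M)$ and $a \in \Lambda^{>0}$ we have $a \cdot s(m) = \pm s(am) = 0$. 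Hence $s|_{S(M)}$ is an honest null-homotopy of $S(f)$ in $C(k)$, showing that $S$ factors through $K(\Lambda) \to K(k)$.

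Finally, composing with the canonical functor $K(k) \to D(k)$ and restricting along the inclusion $K(\Inj \Lambda) \hookrightarrow K(\Lambda)$ (which is well-defined by Lemma~\ref{lem:Injtriangulated}) yields the desired functor $S \colon K(\Inj \Lambda) \to D(k)$. There is essentially no obstacle here beyond the two sign/degree checks above; the crux of the argument is the observation that $\Lambda^{>0}$ is closed under the differential, which is what forces both $d_M$ and any null-homotopy to preserve the subspace $S(M)$.
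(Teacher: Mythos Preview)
Your proof is correct and in fact establishes something slightly stronger than the paper does: you show that $S$ already descends to a functor $K(\Lambda)\to K(k)$ before composing with the localisation to $D(k)$, by verifying directly that any null-homotopy $s$ of graded $\Lambda$-modules restricts to a null-homotopy on $S(M)$.

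The paper takes a different route. It uses the identification $S(M)=\HOM_\Lambda(\rin,M)$ together with the dg-enhancement property $H^n(\HOM_\Lambda(\rin,M))=\Hom_{K(\Lambda)}(\rin,M[n])$ (Remark~\ref{rem:dgenhancement}). If $f$ is null-homotopic, post-composition with $f$ is zero on $\Hom_{K(\Lambda)}(\rin,-)$, so $S(f)$ induces zero on all cohomology groups; since $\rin$ is semisimple, this forces $S(f)=0$ in $D(k)$. Your argument avoids this last appeal to semisimplicity of $\rin$ at the cost of the explicit sign/degree check that $s$ preserves the $\Lambda^{>0}$-annihilated submodule. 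The paper's approach, on the other hand, packages the verification into the general framework of internal Hom complexes and is perhaps more conceptual, but yours is more self-contained.
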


\begin{proof}
We need to check that null homotopic maps are sent to zero. This follows from the isomorphisms $H^n (S(-)) = H^n(\underline{\Hom}_{\Lambda}(\rin,-)) = \Hom_{K(\Lambda)}(\rin, (-)[n])$. 
\end{proof}

\begin{remark}
    The category $K(\Inj \Lambda)$ can be very degenerate. For instance, if $\Lambda = \Bbbk[c] / (c^n)$ where $d = 0$, $c$ is the curvature and $n \ge 2$, then every injective object in $C(\Lambda)$ is of the form $\Lambda \oplus \Lambda[1]$, where $d(a,b) = (cb,a)$. Then Proposition~\ref{prop:injjacyclic} below shows that $I = 0$ in $K(\Inj \Lambda)$ for all $I$. This agrees with \cite[Proposition~3.2]{KLPvanishing} under the identification of $K(\Inj \Lambda)$ with the coderived category of $(\Lambda,d,c)$.  
\end{remark}

\subsection{The Adjoint Functors} 

In this section, we define the two functors $F,G$ and prove that they are an adjoint pair that descend to the level of homotopy categories. The results in this section already appeared in \cite{floystadKoszulDualityEquivalences2005} in the case $\rin$ is a field.

Let $U$ be a non-homogeneous Koszul $k$-algebra and $A$ the associated quadratic algebra, so that $A^!$ is a cdga by Theorem~\ref{thm:cdga}. 

\begin{defn}
Define the graded $U\!-\!A^!$ bimodule $T := U \otimes_k A^!$ with $T^i = U \otimes_k (A^!)^i$ and differential $d=d_\ot+1\ot d_{A^!}$, where
\[
d_\ot:U\ok A^!=U\ok k \ok A^!\xrightarrow{c_E}U\ok E \ok E^\ast \ok A^!\xrightarrow{m_U\ot m_{A^!}} U\ok A^!,
\]
$c_E$ is the coevaluation map $k \to E \ok E^\ast$ and $m_U$ and $m_{A^!}$ denote multiplication in $U$ and $A^!$ respectively. If $c_E(1) = \sum_{\alpha} x_{\alpha} \otimes \hat{x}_{\alpha}$ then 
\[
d_{\otimes}(u \otimes b) = \sum_{\alpha} u x_{\alpha} \otimes \hat{x}_{\alpha} b.
\]
\end{defn}

\begin{prop}\label{prop:diffsquareto0}
    The differential $d$ defined above makes $T$ into a {right} cdg-module for $A^!$.
\end{prop}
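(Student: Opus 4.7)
The plan is to verify the two defining properties of a right cdg-module over $(A^!, d_{A^!}, c)$: the graded Leibniz rule $d(mb) = d(m)b + (-1)^{|m|}\, m\, d_{A^!}(b)$, and the curvature identity $d^2(m) = -mc$, where $c = \beta \in (A^!)^2$ by Theorem~\ref{thm:cdga}. The Leibniz rule is immediate: by construction $d_\otimes$ acts only on the $U$ tensor factor, hence is right $A^!$-linear, while $1\otimes d_{A^!}$ inherits the graded Leibniz rule from $d_{A^!}$ with the correct sign $(-1)^{|u\otimes a|} = (-1)^{|a|}$. The substance of the proof lies in verifying the curvature identity.

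For that, I would expand
\[
d^2 = d_\otimes^2 + \bigl(d_\otimes\circ(1\otimes d_{A^!}) + (1\otimes d_{A^!})\circ d_\otimes\bigr) + (1\otimes d_{A^!})^2
\]
and treat each summand separately. The third summand equals $u \otimes [c,a] = u\otimes(ca - ac)$ because $d_{A^!}^2 = [c,\cdot]$ and $|c| = 2$. A short Leibniz computation on $A^!$ shows the cross terms collapse to $\sum_\alpha u x_\alpha \otimes d_{A^!}(\hat{x}_\alpha)\, a = \sum_\alpha u x_\alpha \otimes \alpha^*(\hat{x}_\alpha)\, a$, using that $d_{A^!} = \alpha^*$ on $(A^!)^1 = E^*$.

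The heart of the argument is identifying $d_\otimes^2(u \otimes a) = \sum_{\alpha,\beta} u x_\alpha x_\beta \otimes \hat{x}_\beta \hat{x}_\alpha\, a$ invariantly. The key observation is that, via the identification \eqref{eq:dualtensorsemisimple}, the element $\sum_{\alpha,\beta} x_\alpha \otimes x_\beta \otimes \hat{x}_\beta \otimes \hat{x}_\alpha$ is the canonical diagonal in $E \ok E \ok (E \ok E)^*$ representing the identity of $E \ok E$. Multiplication in $A^!$ factors through the restriction $(E \ok E)^* \twoheadrightarrow Q^* = A^!_2$, so under the canonical isomorphism $U \ok A^!_2 \cong \Hom_{-k}(Q, U)$, the element $\sum_{\alpha,\beta} x_\alpha x_\beta \otimes \hat{x}_\beta \hat{x}_\alpha$ corresponds to the restriction to $Q$ of the natural map $E \ok E \to U$. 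By the defining relations \eqref{eq:R}, this restriction equals $-\alpha - \beta \colon Q \to E \oplus k \hookrightarrow U$, which I translate back to $\sum_{\alpha,\beta} x_\alpha x_\beta \otimes \hat{x}_\beta \hat{x}_\alpha = -\sum_\gamma x_\gamma \otimes \alpha^*(\hat{x}_\gamma) - 1 \otimes \beta$ in $U \ok A^!_2$.

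Assembling the three summands, the $\alpha^*$-contributions from $d_\otimes^2$ cancel the cross terms, and $-u \otimes \beta a$ cancels the $+u \otimes ca$ piece of $(1\otimes d_{A^!})^2$ (using $c = \beta$), leaving $d^2(u \otimes a) = -u \otimes ac = -(u \otimes a)\cdot c$, as required. The main obstacle will be the naturality step in the third paragraph: tracking the right-$k$-linear conventions, in particular the transposition appearing in \eqref{eq:dualtensorsemisimple}, over the noncommutative semisimple base $k$ requires some care, but once that identification is pinned down the remainder is a mechanical bookkeeping of signs.
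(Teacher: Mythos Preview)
Your proposal is correct and follows essentially the same route as the paper: both decompose $d^2$ into $d_\otimes^2$, the cross terms, and $(1\otimes d_{A^!})^2$, reduce the cross terms via the Leibniz rule to $\sum_\alpha u x_\alpha \otimes d_{A^!}(\hat{x}_\alpha)\,a$, and identify $d_\otimes^2$ by recognizing that the relevant element of $U\ok Q^*$ corresponds (under $U\ok Q^*\cong\Hom_{-k}(Q,U)$) to the composite $Q\hookrightarrow E\ok E\to U$, which equals $-\alpha-\beta$. The only difference is packaging: the paper invokes the abstract monoidal lemmas of Appendix~\ref{appendix} (Lemma~\ref{lemma:adjunctioncoev} and Proposition~\ref{prop:monoidalnonsense}) to justify this identification, whereas you write out the $\Hom_{-k}$ isomorphism explicitly; the underlying computation is identical.
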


\begin{proof}

We only need to check that $d^2(u\ot a)=-u\ot ac$. By definition, 
$$d^2(u\ot a)=d^2_\ot(u\ot a)+d_\ot(u\ot d_{A^!}(a))+(1\ot d_{A^!})d_\ot(u\ot a)+u\ot d^2_{A^!}(a).$$
By Leibniz rule, $d_\ot(u\ot d_{A^!}(a))+(1\ot d_{A^!})d_\ot(u\ot a)=f$, where
$$f:U\ok A^!\xrightarrow{1\ot c_E\ot 1}U\ok E\ok E^*\ok A^!\xrightarrow{1\ot1\ot d_{A^!}\ot1}U\ok E\ok E^*\ok A^!\xrightarrow{m_u\ot m_{A^!}}U\ok A^!.$$
Since $d^2_{A^!}(a)=[c,a]$, it is sufficient to prove that $(d^2_\ot+f)u\ot a=-u\ot ca$. By associativity and the first part of Proposition \ref{prop:monoidalnonsense},  it is sufficient to show that
\begin{equation}\label{eq:squaredifferential}
    d^2_\ot:k\xrightarrow{c_{E\ot E}}E\ok E\ok E^*\ok E^*\xrightarrow{m_U\ok m_{A^!}}U\ok Q^*
\end{equation}
is equal to $1\mapsto -(1\ot d_{A^!})c_E(1)-1\ok c$.
Consider 
\begin{equation}\label{eq:inclusionprojection}
    Q\xhookrightarrow{i} E\ok E\xrightarrow{p} U_{\le1}\subset U, 
\end{equation}
which is equal to $-\alpha-\beta$. Notice that $m_U=p$ on $E\ok E$ and $m_{A^!}=i^*$ on $E^*\ok E^*$. 
So \eqref{eq:inclusionprojection} is the adjunct of \eqref{eq:squaredifferential} by Lemma \ref{lemma:adjunctioncoev}. Hence, \eqref{eq:squaredifferential} is equal to
$$k\xrightarrow{c_{Q}}Q\ok Q^*\xrightarrow{(-\alpha-\beta)\ot Q^*}(k\oplus E)\ok Q^*\subset U\ok Q^*. $$
By dualizing the second diagram in Proposition \ref{prop:monoidalnonsense}, this is equal to
$$k\xrightarrow{c_{U_{\le1}}}U_{\le1}\ok U_{\le1}^*\xrightarrow{U_{\le1}\ot(-\alpha^*-\beta^*)}U_{\le1}\ok Q^*\subset U\ok Q^*,$$
which is equal to $1\mapsto -(1\ot \alpha^*)c_E(1)-1\ok \beta^*(1)=-(1\ot d_{A^!})c_E(1)-1\ok c$.
\end{proof}

Denote by $C(U)$ the category of complexes of $U$-modules. For $M\in C(U)$ and $N\in C(A^!)$, define
\begin{equation}\label{eq:F-Gdef}
    F(N):=T\ot_{A^!}N\cong U\ok N, \quad G(M):=\HOM_U(T,M)\cong\HOM_{k}(A^!,M).
\end{equation}
Clearly, $F(N)$ and $G(M)$ are graded $U$ and $A^!$ modules respectively with canonical degree one endomorphisms:
\begin{equation}
    F(N)^n=\bigoplus_{i+j=n}T^i\ot_{A^!} N^j \cong U\ok N^n, \quad d(t\ot n)=d_T(t)\ot n+(-1)^{|t|}t\ot d_N(n),
\end{equation}
\begin{equation}        G(M)^n=\prod_{i\geq0}\Hom_U(T^i,M^{n+i}) \cong \prod_{i\geq0}\Hom_\rin((A^!)^i,M^{n+i}), \quad d(f)=d_Mf-(-1)^{|f|}fd_{T}.
\end{equation}

\begin{rem}\label{rem:otherdifferentialF}
    Identifying $F(N)=T\ot_{A^!}N\cong U\ok N$, the differential becomes 
    $$d(u\ok n)=\sum_\alpha ux_\alpha\ok\hat{x}_\alpha n+u\ok d_N(n).$$
\end{rem}

\begin{prop}
Formula \eqref{eq:F-Gdef} defines a pair of adjoint functors
\[
\begin{tikzcd}
F  \colon C(A^!) \arrow[r,shift left] & \arrow[l,shift left] C(U) \colon G ,
\end{tikzcd}
\]
that descends to the homotopy categories
\[
\begin{tikzcd}
F:  K(A^!) \arrow[r,shift left] & \arrow[l,shift left] K(U): G
\end{tikzcd}
\]
\end{prop}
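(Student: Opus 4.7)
The plan is to verify the three parts of the statement in order: (a) $F$ and $G$ take values in the claimed categories; (b) they form an adjoint pair at the cdg level; (c) the adjunction passes to the homotopy categories. Throughout I will use the dg-enhancement recorded in Remark~\ref{rem:dgenhancement}, so as to reduce (c) to a degree-zero cohomology computation.

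For (a), the $U$-module structure on $F(N) = T \otimes_{A^!} N$ comes from the left $U$-action on $T$, and the $A^!$-module structure on $G(M) = \HOM_U(T,M)$ comes from the right $A^!$-action on $T$ via $(a\cdot f)(t) = f(ta)$. A direct Leibniz-rule computation, using Proposition~\ref{prop:diffsquareto0} (which gives $d_T^2(u\otimes a) = -u\otimes ac$) and the curved module identity $d_N^2 = c\cdot$, yields
\[
d_{F(N)}^2(t\otimes n) \;=\; d_T^2(t)\otimes n + t\otimes d_N^2(n) \;=\; -tc\otimes n + t\otimes cn \;=\; 0,
\]
using that $c \in A^!$ moves across $\otimes_{A^!}$. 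Thus $F(N)$ is genuinely a complex of $U$-modules. Similarly, for $f \in G(M)$,
\[
d^2(f) \;=\; d_M^2 f - f d_T^2 \;=\; -f d_T^2,
\]
and $(-f d_T^2)(u\otimes a) = f(u\otimes ac) = (c\cdot f)(u\otimes a)$, so $d_{G(M)}^2 = c\cdot$ as required for a curved $A^!$-module.

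For (b), the classical tensor-hom adjunction for graded bimodules gives a natural isomorphism
\[
\varphi \colon \HOM_U\bigl(T\otimes_{A^!} N,\, M\bigr) \;\iso\; \HOM_{A^!}\bigl(N,\, \HOM_U(T,M)\bigr),
\]
by $\varphi(f)(n)(t) = f(t\otimes n)$. I will check that $\varphi$ intertwines the differentials of the two Hom complexes defined by \eqref{eq:homdifferential}, and hence is an isomorphism of complexes of $\Bbbk$-vector spaces; this is a sign-tracking computation using the Leibniz rule on $F(N)$ (cf.\ Remark~\ref{rem:otherdifferentialF}) and on $G(M)$. Taking $0$-cocycles of $\varphi$ then produces the desired bijection between morphisms in $C(U)$ and $C(A^!)$, proving adjointness at the level of curved dg-modules.

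For (c), the fact that $\varphi$ is an isomorphism of Hom complexes implies it induces an isomorphism in all cohomological degrees. By Remark~\ref{rem:dgenhancement}, $H^0$ of the Hom complex computes Hom in the homotopy category on either side, so
\[
\Hom_{K(U)}\bigl(F(N), M\bigr) \;=\; H^0 \HOM_U(F(N),M) \;\iso\; H^0 \HOM_{A^!}(N, G(M)) \;=\; \Hom_{K(A^!)}\bigl(N, G(M)\bigr),
\]
naturally in $M$ and $N$. Equivalently, a null-homotopy for $f \colon F(N)\to M$ (a degree $-1$ graded morphism $s$ with $f = ds$) transports under $\varphi$ to a null-homotopy for the adjoint morphism, which is the only content needed to show that the functors $F,G$ are well-defined on $K$ and that $(F,G)$ remains an adjoint pair. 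The main bookkeeping obstacle is keeping the Koszul signs consistent across $\varphi$ when both $d_T$ involves a non-central curvature and $d_N$, $d_M$ act on opposite sides of the isomorphism; once this is handled correctly, everything else is formal.
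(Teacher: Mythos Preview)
Your proposal is correct and follows essentially the same approach as the paper: verify that $d^2$ vanishes on $F(N)$ and equals $c\cdot$ on $G(M)$ via Proposition~\ref{prop:diffsquareto0}, then use the graded tensor-hom adjunction $\HOM_U(F(N),M)\cong\HOM_{A^!}(N,G(M))$ and take degree-zero cocycles (resp.\ cohomology) for the adjunction on $C$ (resp.\ $K$). The only cosmetic difference is that the paper cites a reference for the graded tensor-hom adjunction rather than verifying directly that $\varphi$ intertwines the differentials.
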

    
\begin{proof}
We first need to check that the codomains are correct. Let $N\in C(A^!)$, $t\in T$, and $n\in N$. Then, by Proposition \ref{prop:diffsquareto0}, 
\begin{align*}
    d^2(t\ot n)&=d^2_T(t)\ot n+(-1)^{|t|+1}d_T(t)\ot d_N(n)+(-1)^{|t|}d_T(t)\ot d_N(n)+t\ot d^2_N\\
    &=-tc\ot n+t\ot cn=0, 
\end{align*}
so $F(N)\in C(U)$. Now let $M\in C(U)$, $f\in\HOM_U(T,M)$, and $t\in T$. Since the differential on $M$ squares to zero, 
\begin{align*}
    (d^2f)(t)&=-(-1)^{|f|+1}d_Mf(d_T(t))-(-1)^{|f|}d_Mf(d_T(t))-f(d^2_T(t))\\
    &=f(tc)=(c.f)(t),
\end{align*}
so $G(M)\in C(A^!)$. By the graded version \cite[Proposition 2.4.9]{nastasescuMethodsGradedRings2004} of the tensor-hom adjunction,
$$\HOM_U(F(N),M)=\HOM_{A^!}(N,G(M)). $$
Taking degree zero-cocycles on both sides, we get the first adjunction. Considering degree zero-cohomology on both sides, we get the adjunction at the level of homotopy categories.
\end{proof}

\subsection{Full-faithfulness}

In this section we show that the functor $G$ is fully-faithful (as a functor from the derived category of $U$), giving the first half of the proof of Theorem \ref{thm:mainequivalence}. A similar result was proved by Fl{\o}ystad in \cite{floystadKoszulDualityEquivalences2005} in the special case $c=0$. Our proof generalizes the argument given there.

Recall $U = \bigcup_{i \ge 0} \Fi_i U$ is a filtered algebra, with associated graded isomorphic to $A$ by \Cref{thm:PBW}. This filtration induces a filtration $F = \underset{{i \to \infty}}{\colim} \, F_i$ on the functor $F$, where 
\begin{equation}\label{eq:Ffiltration}
    F_i(N) = \cdots \to \Fi_{i+p} U \otimes_{\rin} N^p \to \Fi_{i+p+1} U\otimes_{\rin} N^{p+1} \to \cdots .
\end{equation}

\begin{lem}\label{lem:FGquasiM}
       Let $M$ be a $U$-module, thought of as an object of $C(U)$ concentrated in degree zero. Then the counit of the adjunction $FG(M) \to M$ is a quasi-isomorphism.
\end{lem}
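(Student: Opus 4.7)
The plan is to use the filtration $F_\bullet$ from \eqref{eq:Ffiltration} applied to $FG(M)$ and run a spectral sequence whose associated graded will be identified with the Koszul complex of $A$ tensored with $M$.

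Since $M$ is concentrated in cohomological degree zero, $G(M)$ is concentrated in non-positive degrees, with $G(M)^{-j} \cong \Hom_k((A^!)^j, M)$. Each $(A^!)^j$ is finitely generated as a left $k$-module (since $E$ is, and $A^!$ is a quotient of $T_k(E^*)$), and $k$ is semisimple, so $\Hom_k((A^!)^j, M) \cong {}^*((A^!)^j) \otimes_k M \cong Q^{(-j)} \otimes_k M$ by Lemma~\ref{lem:!A!}. Under this identification, the left $A^!$-action on $G(M)$ corresponds to the natural left $A^!$-action on ${}^*(A^!)$ tensored with $\id_M$.

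Degreewise, I compute $(\gr^i FG(M))^{-j} = (\Fi_{i-j}U / \Fi_{i-j-1}U) \otimes_k G(M)^{-j} \cong A^{i-j} \otimes_k Q^{(-j)} \otimes_k M$, which is precisely the internal-degree-$i$ part of $\mc{K}'(A) \otimes_k M$. The key technical check is that the differential on $FG(M)$ reduces to the Koszul differential on the associated graded: writing $d_{FG(M)}(u \otimes g) = \sum_\alpha ux_\alpha \otimes \hat{x}_\alpha g + u \otimes d_G(g)$ (cf.\ Remark~\ref{rem:otherdifferentialF}), the first summand preserves the filtration degree $i$, while the second summand lies in $F_{i-1}$, since an element $u \in \Fi_{i-j}U$ placed at position $-j+1$ corresponds to filtration index $i-1$, not $i$. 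Consequently, under the identification above, the differential on $\gr^\bullet FG(M)$ is exactly $d' \otimes \id_M$.

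By Theorem~\ref{thm:koszulprojresolution}, $\mc{K}'(A) \to k$ is a quasi-isomorphism of complexes of left $A$-modules, and the image $k$ is concentrated in internal degree $0$. Since $k$ is semisimple, tensoring over $k$ with $M$ is exact, so $\mc{K}'(A) \otimes_k M \to M$ is a quasi-isomorphism, and $H^n(\gr^i FG(M)) = \delta_{n,0}\delta_{i,0}\, M$. The filtration $F_\bullet FG(M)$ is exhaustive and bounded below in each cohomological degree ($F_i FG(M)^{-j} = 0$ for $i < j$), so the associated spectral sequence converges strongly to $H^*(FG(M))$. Its $E_1$-page is concentrated at a single spot with value $M$, forcing $H^n(FG(M)) = \delta_{n,0}\, M$. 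A direct inspection shows that on the surviving associated-graded piece $(\gr^0 FG(M))^0 = k \otimes_k M$, the counit restricts to the identity $M \to M$, so it is a quasi-isomorphism. The main obstacle will be the filtration-degree bookkeeping: carefully identifying all terms in the differential of $FG(M)$, verifying the relative filtration degrees of each summand, and matching the surviving term on the associated graded with the formula for $d'$ on $\mc{K}'(A)$ under the non-trivial identification $G(M)^{-j} \cong {}^*((A^!)^j) \otimes_k M$.
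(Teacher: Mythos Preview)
Your proposal is correct and follows essentially the same route as the paper: filter $FG(M)$ by the $F_i$ of \eqref{eq:Ffiltration}, identify the associated graded with the Koszul complex $\mc{K}(A)\otimes_k M$, and invoke Koszulity. The only cosmetic difference is that you package the passage from the associated graded to $FG(M)$ as a spectral sequence, whereas the paper argues directly via the short exact sequences $0\to F_{i-1}G(M)\to F_iG(M)\to \mc{K}_i\otimes_k M\to 0$ together with the identification $F_0G(M)=M$ and the fact that cohomology commutes with the colimit $FG(M)=\colim_i F_iG(M)$.
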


\begin{proof}
    Assume first that $M \in C(U)$ is an arbitrary complex. Then,
\[
F_i G(M)^p = \Fi_{i + p} U \otimes_k G(M)^p=\Fi_{i + p} U \otimes_k \HOM_{U}(T,M)^p,
\]
where the differential on $F_iG(M)$ is given by
\[
d(u \otimes \phi) = \sum u x_{\alpha} \otimes  (\hat{x}_{\alpha}.\phi)+ u \otimes d_M \circ \phi - (-1)^{|\phi|} u\otimes \phi \circ d_{T}; 
\]
see Remark \ref{rem:otherdifferentialF}.

If $u \otimes \phi\in \Fi_{i + p} U \otimes_k \HOM_{U}(T,M)^p$, then the term $u \otimes d_M \circ \phi - (-1)^{|\phi|} u\otimes \phi \circ d_{T}$ belongs to 
\[
\Fi_{i+p} U \otimes_k \HOM_{U}(T,M)^{p+1} = \Fi_{(i-1)+(p+1)} U\ok \HOM_{U}(T,M)^{p+1}.
\]
Therefore, the differential on the associated graded
\[
(\gr F)G(M)=\bigoplus_{i\in \Z} \frac{F_i G(M)}{F_{i-1} G(M)} \cong A \otimes_k\HOM_{U}(T,M)
\]
equals $d(\overline{u} \otimes \phi) = \sum \overline{u} x_{\alpha} \otimes  (\hat{x}_{\alpha}. \phi)$. 

Now assume that $M$ is concentrated in degree zero. Since $A^!$ is bounded below, Proposition~\ref{prop:gradedkhom} implies that 
\[
(\gr F)G(M) \cong A \otimes_k\HOM_{U}(T,M)\cong A\ok\HOM_{k}(A^!,M) \cong A\ok {}^*(A^!)\ok M.
\]
Hence the associated graded complex $(\gr F)G(M)$ is isomorphic to the Koszul complex $\mc{K}(A)=A\ok {}^*(A^!)$ tensored on the right by $M$. Since $\rin$ is semisimple, $M$ is flat over $\rin$, so $\mc{K}\ok M$ is a projective resolution of $M$, thought of as a graded $A$ module via the quotient map $A \to A / A_{> 0} = k$. Thus, we have a quasi-isomorphism $\mc{K}\ok M\rightarrow M$. Moreover, the existence of the short exact sequence 
$$0\to F_{i-1}G(M)\to F_iG(M)\to \mc{K}_i\ok M\to0, $$
and the fact that $\mc{K} \otimes_{\rin} M$ is acyclic in all non-zero (internal) degrees imply that the inclusion $\iota \colon F_iG(M)\to F_{i+1}G(M)$ is a quasi-isomorphism for all $i\geq0$. 

Since the grading on ${}^{\ast} (A^!)$ is concentrated in non-positive degrees and $\Fi_{i} U = 0$ for $i < 0$, $F_i G(M) = 0$ for $ i < 0$ and $F_0 G(M)$ can be identified with $M$. Hence the inclusions $M = F_0 G(M) \stackrel{\iota}{\hookrightarrow} F_i G(M)$ are quasi-isomorphisms for all $i$. Therefore, using the fact that cohomology commutes with colimits \cite[Lemma~00DB]{stacks-project}, the morphism ${\iota}$ in  
\begin{equation}\label{eq:Fifiltr2}
M = F_0 G(M) \stackrel{\iota}{\longrightarrow} FG(M) = \underset{i \to \infty}{\mathrm{colim}} \, F_iG(M) \longrightarrow M
\end{equation}
is a quasi-isomorphism. Since the composite $M \to M$ in \eqref{eq:Fifiltr2} is the identity, we deduce that $FG(M) \to M$ is a quasi-isomorphism.
\end{proof}

\begin{thm}\label{thm:FGquasiK}
    For any $M \in C(U)$, the counit $FG(M) \to M$ is a quasi-isomorphism. 
\end{thm}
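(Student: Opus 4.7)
I plan to bootstrap from Lemma~\ref{lem:FGquasiM}, which handles $M$ concentrated in a single cohomological degree, to arbitrary $M \in C(U)$.

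The first step is a triangulated reduction. Both $F$ and $G$ descend to triangulated functors on the homotopy categories and the counit $\eta \colon FG \Rightarrow \id_{K(U)}$ is a natural transformation between them. Hence the full subcategory $\mathcal{X} \subseteq K(U)$ consisting of complexes $M$ for which $\eta_M$ is a quasi-isomorphism is a triangulated subcategory closed under direct summands. By Lemma~\ref{lem:FGquasiM} and shift-invariance, $\mathcal{X}$ contains every complex concentrated in a single cohomological degree. An induction on length using the distinguished triangles $\sigma^{>p}M \to M \to \sigma^{\le p}M$ coming from brutal truncations of a bounded complex then places every bounded complex in $\mathcal{X}$.

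For the unbounded case, I will revisit the filtration $\{F_iG(M)\}_{i \in \Z}$ from \eqref{eq:Ffiltration}. The differential computation in the proof of Lemma~\ref{lem:FGquasiM} does not use the hypothesis that $M$ is in a single degree: the ``$d_M$-piece'' of the differential strictly lowers the filtration index, so on the associated graded $\gr F \cdot G(M) \cong A \otimes_k \HOM_U(T,M)$ only the Koszul piece of the differential survives (and is well-defined since $\HOM_U(T,M)$ inherits a graded $A^!$-module structure). By Koszul exactness (Theorem~\ref{thm:koszulprojresolution}), the successive quotients $F_{i+1}G(M)/F_iG(M)$ are acyclic in positive internal degrees, so each inclusion $F_iG(M) \hookrightarrow F_{i+1}G(M)$ is a quasi-isomorphism. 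Since $FG(M) = \colim_i F_iG(M)$ and cohomology commutes with filtered colimits, the inclusion $F_iG(M) \hookrightarrow FG(M)$ is then a quasi-isomorphism for every $i \in \Z$.

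The hard part is comparing $F_iG(M)$ with $M$ itself via the counit. The key observation is that $F_iG(M)$ is supported in cohomological degrees $\geq -i$ and depends on $M$ only through the bounded-below truncation $\sigma^{\geq -i}M$; in fact $F_iG(M) \cong F_iG(\sigma^{\geq -i}M)$ as subcomplexes of $FG(-)$. Within a fixed filtration level $F_iG$, Proposition~\ref{prop:gradedkhom} reduces the infinite product in $\HOM_U(T,-)$ to a finite direct sum once one further truncates above, so one can identify $F_iG(M)$ with $F_iG$ applied to a suitable bounded truncation of $M$ that falls within the first step. Passing to the colimit in $i$ then yields the desired quasi-isomorphism $FG(M) \to M$. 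The most delicate technical point is verifying compatibility of the counit with these truncations and the colimit, the subtlety being that $G$ does not commute with arbitrary colimits in view of the infinite products appearing in $\HOM_U(T,-)$; the bounded-below character of each $F_iG(M)$ is what tames this issue.
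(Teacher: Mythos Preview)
Your Steps 1--2 (single degree and bounded complexes via brutal truncations) match the paper's Steps 1--2 exactly. The gap is in the unbounded case.

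The claim that each inclusion $F_iG(M) \hookrightarrow F_{i+1}G(M)$ is a quasi-isomorphism for arbitrary $M$ is \emph{false}. The cokernel $\gr_{i+1}FG(M)$ is, as you say, the complex $A \otimes_k G(M)$ with the pure Koszul differential in internal degree $i+1$. But $G(M)^\#$ is an injective graded $A^!$-module (this is the content of Lemma~\ref{lem:imageG}), so by Corollary~\ref{cor:homokoszulcomplex} the complex $\mc{F}(G(M)^\#)$ has cohomology $S(G(M))\cong M$ concentrated in cohomological degree $0$. Translating back through the bigrading, $\gr_{i+1}FG(M)$ has cohomology $M^{-i-1}$ sitting in degree $-i-1$; this vanishes only when $M^{-i-1}=0$. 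So for $M$ concentrated in degree $0$ the inclusions $F_i\hookrightarrow F_{i+1}$ are quasi-isomorphisms for $i\ge 0$ (as in Lemma~\ref{lem:FGquasiM}), but for a genuinely unbounded $M$ they are not: your filtration argument collapses precisely where you need it. Invoking Theorem~\ref{thm:koszulprojresolution} does not help here, because for general $M$ there is no identification $\HOM_k(A^!,M)\cong {}^*(A^!)\otimes_k M$ (Proposition~\ref{prop:gradedkhom} needs a boundedness hypothesis), so you cannot view $\gr FG(M)$ as $\mc{K}(A)\otimes_k M$.

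Your recovery attempt also has a gap: the observation $F_iG(M)\cong F_iG(\sigma^{\ge -i}M)$ is correct and useful, but $\sigma^{\ge -i}M$ is only bounded \emph{below}, not bounded, so it does not fall under Step 2; and further truncating above does \emph{not} give $F_iG(M)\cong F_iG(\sigma^{\le N}\sigma^{\ge -i}M)$ for any finite $N$ unless $A^!$ is bounded, since $G(M)^p$ involves $M^{p+j}$ for all $j\ge 0$. The paper handles this by first treating the bounded-above case via a colimit (Step 3), then for general $M$ comparing $FG(M)$ with $FG(\sigma^{\le p}M)$ degreewise and controlling the complementary piece $FG(\sigma^{>p}M)$ with a Mittag--Leffler/$\lim^1$ argument applied to the inverse system $F_jG(\sigma^{\le q}M)$.
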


\begin{proof}

Step 1: $M = M^0$ is concentrated in degree $0$. This is precisely Lemma~\ref{lem:FGquasiM}. 

Step 2: assume that $M$ is a bounded complex. There are short exact sequences 
\[
0 \to \sigma_{> i} M \to M \to \sigma_{\leq i} M \to 0
\]
given by brutal truncations. Since $FG$ is exact, starting from Step~1 we can prove by induction that $FG(M) \to M$ is a quasi-isomorphism. 

Step 3: assume that $M$ is an arbitrary complex that is bounded above. Then $M = \underset{i \to - \infty}{\mathrm{colim}} \, \sigma_{\geq i} M$ and we have
\[
G(M) = {}^*(A^!) \otimes_k M = {}^*(A^!) \otimes_k \underset{i \to - \infty}{\mathrm{colim}}  \, \sigma_{\geq i} M = \underset{i \to - \infty}{\mathrm{colim}}  \, {}^*(A^!) \otimes_k \sigma_{\geq i} M = \underset{i \to - \infty}{\mathrm{colim}}  \, G(\sigma_{\geq i} M). 
\]
Here we have made the first and final identifications using Proposition~\ref{prop:gradedkhom}, since $M$ and all of the complexes $\sigma_{\geq i} M$ are bounded above, and the middle identification follows from the fact that tensor products commute with colimits \cite[{Lemma 00DD}]{stacks-project}. Since $F$ is a left adjoint it preserves colimits: 
\[
FG(M) = F \left( \underset{i \to - \infty}{\mathrm{colim}}  \, G(\sigma_{\geq i} M) \right) \iso \underset{i \to - \infty}{\mathrm{colim}}  \, F G(\sigma_{\geq i} M),
\]
and so the map $FG(M) \to M$ is a quasi-isomorphism. 

Step 4: the case of an arbitrary complex $M$. Fix some $p \in \mathbb{Z}$. Naturality and exactness of $F$ and $G$ induce a commutative diagram with exact rows
\[
\begin{tikzcd}
0 \arrow[r]  & FG(\sigma_{> p} M) \arrow[d] \arrow[r] & FG(M) \arrow[r] \arrow[d] & FG(\sigma_{\leq p} M) \arrow[r] \arrow[d] & 0  \\
0 \arrow[r]  & \sigma_{> p} M\arrow[r] & M  \arrow[r] & \sigma_{\leq p} M  \arrow[r] & 0.  
\end{tikzcd}
\]
For $i < p$, this induces a diagram  
\[
\begin{tikzcd}
H^{i} FG(\sigma_{> p} M) \arrow[r] \arrow[d] & H^{i} FG(M) \arrow[r] \arrow[d] & H^{i} FG(\sigma_{\leq p} M) \arrow[r] \arrow[d,"\wr"] & H^{i+1}FG(\sigma_{> p} M) \arrow[d]\\
0 \arrow[r] & H^{i}(M) \arrow[r,"\sim"] & H^{i}(\sigma_{\leq p} M) \arrow[r] & 0
\end{tikzcd}
\]
on cohomology so it remains to show that if $M = \sigma_{>p} M$, then $H^{i}(FG(M)) = H^{i+1}(FG(M)) = 0$ for all $i < p$. If $M$ is concentrated in a single degree $\ell > p$ then $H^{i+1}(F_jG(M)) = 0$ for $i < p$ since $H^{\idot}(F_j G(M)) = H^{\idot}(F_0 G(M)) \cong M$, as shown in the proof of Lemma~\ref{lem:FGquasiM}. By using the brutal truncations and inducting on the length of the complex, one can show that if $M = \sigma_{ > p} M$ is a bounded complex $H^{i+1}(F_jG(M)) = 0$ for $i < p$. 

Now for any $M = \sigma_{ > p} M$, we have $M = \underset{q \to \infty}{\mathrm{lim}} \, \sigma_{\leq q} M$. Since $G$ is a right adjoint it preserves limits. Since each $\Fi_j U$ is finite-dimensional, $\Fi_j U \otimes_k -$ commutes with limits. Hence
\begin{equation}\label{eq:FGlim}
    F_jG(M) = F_j G \left( \underset{q \to \infty}{\mathrm{lim}} \,  \sigma_{\leq q} M \right) = \underset{q \to \infty}{\mathrm{lim}} \,  F_j G(\sigma_{\leq q} M). 
\end{equation}
In order to use this equality to prove the vanishing $H^{i+1}(FG(M)) = 0$, we claim that for a fixed $j$ the system of complexes $F_j G(\sigma_{\leq q} M)$ satisfies the Mittag-Leffler condition. Indeed, since each $\sigma_{\leq q} M$ is bounded above, Proposition~\ref{prop:gradedkhom} implies that in cohomological degree $t$ the transition maps are 
\[
\begin{tikzcd}
    F_j G(\sigma_{\leq q} M)^t \ar[r] \ar[d,"\wr"] & F_j G(\sigma_{\leq q-1} M)^t \ar[d,"\wr"] \\
   \bigoplus_{r \in \Z} \Fi_{j + t-r} U \otimes_k {}^\ast (A^!)^{t-r} \otimes_k (\sigma_{\leq q} M)^r \ar[r] &  \bigoplus_{r \in \Z} \Fi_{j + t-r} U \otimes_k {}^\ast (A^!)^{t-r} \otimes_k (\sigma_{\leq q-1} M)^r.
\end{tikzcd}
\]

These are all epimorphisms because, for fixed $r \in \Z$, the map
\[
\Fi_{j + t-r} U \otimes_k {}^\ast (A^!)^{t-r} \otimes_k (\sigma_{\leq q} M)^r \to \Fi_{j + t-r} U \otimes_k {}^\ast (A^!)^{t-r} \otimes_k (\sigma_{\leq q-1} M)^r
\]
is an epimorphism since tensoring over $k$ is exact and the maps $(\sigma_{\leq q} M)^r \to (\sigma_{\leq q-1} M)^r$ are epimorphisms. Hence by \cite[Theorem~3.5.8]{Weibel} and equality \eqref{eq:FGlim}, there is a short exact sequence
\[
0 \to \underset{q \to \infty}{\mathrm{lim}^1} H^{i}(F_j G(\sigma_{\leq q} M)) \to H^{i+1}( F_jG(M)) \to \underset{q \to \infty}{\mathrm{lim}} H^{i+1}(  F_j G(\sigma_{\leq q} M)) \to 0.
\]
The outer two terms vanish for $i < p$ and so $H^{i+1}(F_jG(M)) = 0$ for $i < p$. Using the fact that cohomology commutes with colimits \cite[Lemma~00DB]{stacks-project}, we deduce that $H^{i+1}(FG(M)) = 0$ for $i < p$ as required. 
\end{proof}

\subsection{A quasi-isomorphism} 

In this section, and the next, we prove a quasi-isomorphism between $S(I)$ and $F(I)$ when $I$ is injective. We first consider the homogeneous case, then use a spectral sequence argument to extend it to the non-homogeneous setting.

Consider the associated graded $\gr{F}$ of the functor $F$. We have $\gr{F}(N)=A\ok N$, with grading and differential given by 
$$\gr_q{F}(N)^p=A_{p+q}\ok N^p, \quad d(a\ok n)=d_\ot(a\ok n)=\sum_\alpha ax_\alpha\ok \hat{x}_\alpha n. $$
Notice that the term involving the differential on $N$ dies in the associated graded. In particular, $\gr F$ only depends on the underlying graded $A^!$ module $N^\#$. Therefore, we define a functor $\mc{F}\colon \grLmod{A^!}\to C(\grLmod{A}) $, $\mc{F}(N) = A \ok N$, with the same differential but a different bigrading  
$$
\mc{F}(N)^p_q=\gr_{-q} F(N)^{p+q}=A_{p}\ok N^{p+q}. 
$$
Notice that since $A$ is non-negatively graded, $\mc F(N)$ is concentrated in non-negative cohomological degrees. The functor $\mc F$ is well-defined for any homogeneous Koszul algebra. In particular, we can replace $A$ by ${}^! A$ and consider the functor associated to the latter. Since $({}^!A)^!=A$, we obtain a functor ${}^! \mc{F} \colon \grLmod{A} \to C(\grLmod{{}^! A})$ into the category of chain complexes over graded ${}^! A$ modules given by ${}^! \mc{F}(M) = {}^! A \ot_{\rin} M$. The following technical lemma is a key step in the proof of the main result. 

\begin{lem}\label{lem:Koszuldualfunctor}
    There is an isomorphism of complexes of graded right $A$ modules
    \[
    {}^!\mc{F}(A)={}^! A \ot_{\rin} A\cong \HOM_{A}(\mc{K}(A), A), 
    \] 
    where $\mc{K}(A)$ is the Koszul complex of $A$. 
\end{lem}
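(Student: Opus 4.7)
The plan is to construct the isomorphism one cohomological degree at a time, exploiting the freeness of the Koszul complex, and then verify that these degree-wise identifications assemble into a chain map.

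In cohomological degree $-i$, the Koszul module $\mc{K}(A)^{-i} = A \otimes_k Q^{(-i)}$ is a free graded left $A$-module generated by the left $k$-submodule $Q^{(-i)}$, which is concentrated in a single internal degree. Therefore a graded morphism of left $A$-modules $A \otimes_k Q^{(-i)} \to A$ is determined by its restriction to $Q^{(-i)}$, and the latter must be left $k$-linear. This yields a natural isomorphism of graded right $A$-modules
\[
\HOM_A(\mc{K}(A)^{-i}, A) \cong \HOM_k(Q^{(-i)}, A).
\]
Because $Q^{(-i)}$ is concentrated in a single internal degree, Proposition \ref{prop:gradedkhom} applies and gives $\HOM_k(Q^{(-i)}, A) \cong {}^*(Q^{(-i)}) \otimes_k A$. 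Combining with Lemma \ref{lem:!A!} we obtain
\[
\HOM_A(\mc{K}(A)^{-i}, A) \;\cong\; ({}^!A)^i \otimes_k A \;=\; {}^!\mc{F}(A)^i,
\]
naturally in the graded right $A$-module structure. Hence the two underlying bigraded right $A$-modules agree.

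To check that these isomorphisms intertwine the differentials, I would use Proposition \ref{prop:homokoszulcomplex} to identify $\mc{K}(A)$ with $\mc{K}''(A) = A \otimes_k ({}^!A)^*$, equipped with the explicit differential $d''(f)(b) = \sum_\alpha f(b\check{x}_\alpha) x_\alpha$, where $\tilde{c}_E(1) = \sum_\alpha \check{x}_\alpha \otimes x_\alpha$. Dualising $d''$ through the adjunction above transforms precomposition with $d_{\mc{K}}$ into the map
\[
\phi \otimes a \;\longmapsto\; \sum_\alpha \phi \check{x}_\alpha \otimes x_\alpha a,
\]
which is precisely the differential on ${}^!\mc{F}(A) = {}^!A \otimes_k A$ obtained by applying the construction of $\mc{F}$ to the Koszul-dual pair via the biduality $({}^!A)^! = A$, i.e.\ using the left coevaluation $\tilde{c}_E$ in place of $c_E$.

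The main obstacle will be book-keeping around the two flavours of duals (left versus right) and the bimodule structure over the semisimple base $k$: signs, the directions of evaluation and coevaluation, and the interplay of left and right tensor products all need to be tracked carefully. Once these conventions are pinned down, the verification reduces to standard monoidal identities of the kind collected in Appendix \ref{appendix}, applied to the evaluation--coevaluation diagrams that define $d''$. A slicker packaging is to note that the isomorphism $\HOM_A(A \otimes_k Q^{(-i)}, A) \cong \HOM_k(Q^{(-i)}, A)$ is an instance of tensor-hom adjunction, so that the compatibility of differentials becomes a naturality statement for this adjunction with respect to the Koszul structure maps.
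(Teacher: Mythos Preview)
Your proposal is correct and takes essentially the same approach as the paper: both set up the degree-wise isomorphism via the freeness of $\mc{K}(A)^{-i}$ together with Lemma~\ref{lem:!A!}, and then verify that precomposition with the Koszul differential corresponds to $f\otimes a\mapsto \sum_\alpha f\check{x}_\alpha\otimes x_\alpha a$. The only cosmetic difference is that the paper computes directly on $\mc{K}(A)$ rather than passing through the $\mc{K}''(A)$ model, but the resulting computation is identical.
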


\begin{proof}
First, let us clarify the gradings on both sides: 
\[
({}^! A \ot_{\rin} A)^p_q = ({}^! A)^p \ot_{\rin} A_{p+q}, \quad \HOM_A(\mc{K}(A),A)^p_q=\HOM_A(\mc{K}(A)^{-p},A)_q.
\]
Recall that $\mc{K}(A)^p_q=A_{p+q}\ok Q^{(p)}$, thus $\phi\in\HOM_A(\mc{K}(A),A)^p_q$ is a map $A_{-p+m}\ok Q^{(-p)}\to A_{q+m}$, for some $m$. By $A$-linearity,  $\phi$ is completely determined by an element in $\Hom_{k}(Q^{(-p)},A_{p+q})$.
We have a bigrading preserving isomorphism $\psi$ defined on the bigraded components as
\[
({}^! A \ot_{\rin} A)^p_q \cong {}^* (Q^{(-p)}) \ot_{\rin} A_{p+q}  \cong \HOM_{\rin}(Q^{(-p)}, A_{p+q})  \cong \HOM_A(\mc{K}(A),A)^p_q.
\]
Explicitly,
\[
\psi(f \otimes a)(1 \ot (q_1 \ot \cdots \ot q_p)) = f(q_1 \ot \cdots \ot q_p) a
\]
for $f \in ({}^! A)^p \cong {}^* (Q^{(-p)})$, $a \in A$ and $q_1 \ot \cdots \ot q_p \in Q^{(-p)}$. 

We check that the isomorphism of graded vector spaces is compatible with differentials. First, 
\begin{align*}
    \psi(d (f \ot a))(1 \ot (q_1 \ot \cdots \ot q_i)) & =  \psi \left(\sum_{\alpha} f \check{x}_{\alpha} \ot x_{\alpha} a \right) (1 \ot (q_1 \ot \cdots \ot q_i)) \\
    & = \sum_{\alpha} (f \check{x}_{\alpha})(q_1 \ot \cdots \ot q_i)  x_{\alpha} a .
\end{align*}
Under the identification ${}^* (E^{\ot (i-1)}) \ot_k {}^* E \cong {}^* (E \ot E^{\ot (i-1)})$ of \eqref{eq:dualtensorsemisimpleleft}, 
\[
(f \check{x}_{\alpha})(q_1 \ot \cdots \ot q_i) = \check{x}_{\alpha} (q_1 f(q_2 \ot \cdots \ot q_i))
\]
and hence $\sum_{\alpha} (f \check{x}_{\alpha})(q_1 \ot \cdots \ot q_i)  x_{\alpha} = q_1 f (q_2 \ot \cdots \ot q_i)$. So 
\[
\psi(d (f \ot a))(1 \ot (q_1 \ot \cdots \ot q_i)) = q_1 f (q_2 \ot \cdots \ot q_i) a. 
\]
On the other hand, if $\phi \in \HOM_{A}(A \ot_{\rin} Q^{(\idot)}, A)$ then 
\[
(d \phi)(1 \ot (q_1 \ot \cdots \ot q_i)) = \phi(d (1 \ot (q_1 \ot \cdots \ot q_i))) =  \phi(q_1 \ot (q_2 \ot \cdots \ot q_i)),
\]
and hence 
\begin{align*}
    d(\psi(f \ot a))(1 \ot (q_1 \ot \cdots \ot q_i)) & =  \psi(f \ot a)(q_1 \ot (q_2 \ot \cdots \ot q_i)) \\
    & =  q_1 \psi(f \ot a)(1 \ot (q_2 \ot \cdots \ot q_i)) \\
    & =  q_1 f (q_2 \ot \cdots \ot q_i) a. 
\end{align*}
This shows that $d(\psi(f \ot a)) = \psi(d (f \ot a))$. 
\end{proof}



\begin{rem}
    Notice that in Lemma~\ref{lem:Koszuldualfunctor}, the differential on $\HOM_A(\mc{K}(A),A)$ is not the one $\phi \mapsto d_A\circ \phi -(-1)^{|\phi|}\circ d_{\mc{K}} $ given by \eqref{eq:homdifferential} since the factor $-(-1)^{|\phi|}$ is missing. Rather, this differential is one usually used to compute $\EXT_A^{\idot}(\rin,A)$ as the cohomology of $\HOM_A(\mc{K}(A)^\idot,A)$ using the fact that the Koszul complex is a projective resolution of $\rin$.
\end{rem}

\begin{prop}\label{prop:calFext}
    For any graded left $A$-module $M$, 
    \[
    H^i({}^! \mc{F}(M)) \cong \underline{\Ext}^i_{A}(\rin,M)
    \]
    as graded $k$-modules.  
\end{prop}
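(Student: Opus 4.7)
The plan is to compute $\underline{\Ext}^i_A(\rin,M)$ using the Koszul resolution of $\rin$ and then identify the resulting $\Hom$-complex with ${}^!\mc{F}(M)$ by a direct generalization of Lemma~\ref{lem:Koszuldualfunctor}. Since $A$ is Koszul, Theorem~\ref{thm:koszulprojresolution} ensures that $\mc{K}(A)\to \rin$ is a graded projective resolution of $\rin$ as a left $A$-module, so
\[
\underline{\Ext}^i_A(\rin,M) \;\cong\; H^i\bigl(\HOM_A(\mc{K}(A),M)\bigr)
\]
as graded $\rin$-modules. It therefore suffices to produce an isomorphism of bigraded complexes $\HOM_A(\mc{K}(A),M)\cong {}^!\mc{F}(M)={}^!A\otimes_\rin M$ and then take cohomology.

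The first step is the identification on the level of bigraded $\rin$-modules. Since $\mc{K}(A)^{-p}=A\otimes_\rin Q^{(-p)}$ is a free graded $A$-module with generating space $Q^{(-p)}$ placed in internal degree $p$, freeness yields
\[
\HOM_A(\mc{K}(A)^{-p},M)_q \;\cong\; \HOM_\rin(Q^{(-p)},M_{p+q}).
\]
Because $E$ is finitely generated as a $\rin$-bimodule, each $Q^{(-p)}\subset E^{\otimes p}$ is finitely generated, hence Proposition~\ref{prop:gradedkhom} (applied componentwise, or just finite-dimensional duality over the semisimple ring) gives $\HOM_\rin(Q^{(-p)},M_{p+q})\cong {}^*Q^{(-p)}\otimes_\rin M_{p+q}$. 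Using Lemma~\ref{lem:!A!} which identifies $({}^!A)^p\cong {}^*Q^{(-p)}$, we obtain
\[
\HOM_A(\mc{K}(A)^{-p},M)_q \;\cong\; ({}^!A)^p\otimes_\rin M_{p+q} \;=\; ({}^!\mc{F}(M))^p_q,
\]
so the bigradings match. Explicitly, a simple tensor $f\otimes m\in {}^!A\otimes_\rin M$ corresponds to the $A$-linear map sending $1\otimes(q_1\otimes\cdots\otimes q_p)$ to $f(q_1\otimes\cdots\otimes q_p)\cdot m$.

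The second step is to check that this bigraded isomorphism intertwines differentials. This is precisely the computation carried out in the proof of Lemma~\ref{lem:Koszuldualfunctor}, and it goes through verbatim with the right $A$-module $A$ replaced by the left $A$-module $M$, since the computation only uses that the target is a left $A$-module and a $\rin$-bimodule. On the one hand, the Koszul differential $d(1\otimes q_1\otimes\cdots\otimes q_p)=q_1\otimes(q_2\otimes\cdots\otimes q_p)$ pulls off the leftmost Koszul factor $q_1$ via the $A$-action on $M$; on the other, the differential on ${}^!A\otimes_\rin M$, given by $d(f\otimes m)=\sum_\alpha f\check{x}_\alpha\otimes x_\alpha m$, does the same under the identification \eqref{eq:dualtensorsemisimpleleft} of ${}^*(E^{\otimes(p-1)})\otimes_\rin {}^*E$ with ${}^*(E\otimes E^{\otimes(p-1)})$. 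Taking $H^i$ of this isomorphism of complexes finishes the proof.

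I expect no significant obstacle: the substantive content is entirely packaged in Theorem~\ref{thm:koszulprojresolution} and Lemma~\ref{lem:Koszuldualfunctor}, and what remains is essentially bookkeeping with the bigrading and a sign-free transcription of the differential calculation from the $M=A$ case. The only mildly delicate point is to track the internal grading shift by $p$ that arises because $Q^{(-p)}$ sits in internal degree $p$ inside $\mc{K}(A)^{-p}$, which is exactly what makes the bigrading conventions of ${}^!\mc{F}$ match those of $\HOM_A(\mc{K}(A),M)$.
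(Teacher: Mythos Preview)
Your proposal is correct and follows essentially the same approach as the paper: compute $\underline{\Ext}^i_A(\rin,M)$ via the Koszul resolution and identify the resulting $\Hom$-complex with ${}^!\mc{F}(M)$. The only packaging difference is that the paper applies Lemma~\ref{lem:Koszuldualfunctor} once to get ${}^!A\otimes_\rin A\cong \HOM_A(\mc{K}(A),A)$ as complexes of right $A$-modules and then tensors over $A$ with $M$ (using that each $\mc{K}(A)^{-p}$ is finitely generated free), whereas you redo the differential computation from that lemma directly with target $M$; the content is the same.
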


\begin{proof}
Note that, by Lemma~\ref{lem:Koszuldualfunctor}, 
\begin{align*}
    {}^! \mc{F}(M) & = ({}^! A \ot_{\rin} A) \ot_A M \cong \HOM_{A}(\mc{K}(A), A) \ot_A M  \cong \HOM_{A}(\mc{K}(A), M).
\end{align*}
 Since $\mc{K}(A)$ is a graded projective resolution of $\rin$, the $i$-th cohomology of the complex $\HOM_{A}(\mc{K}(A), M)$ equals $\underline{\Ext}^i_{{A}}(\rin,M)$. 
\end{proof}

We will mainly use the following corollary of Proposition~\ref{prop:calFext}. Since $N$ is a graded $A^!$-module, it can be thought of as a complex of graded $A^!$-modules concentrated in cohomological degree zero. Then $S(N)$ is a complex of graded $\rin$-modules concentrated in homological degree zero. The rule $n \mapsto 1 \ot n$ defines a morphism of complexes $S(N) \to \mathcal{F}(N)$, since $d(1 \ot n) = 0$. 

\begin{cor}\label{cor:homokoszulcomplex}
    If $I$ is an injective graded $A^!$-module then the morphism $S(I) \to \mathcal{F}(I)$ is a quasi-isomorphism of complexes over $\rin$. In particular, $H^{\idot}(\mathcal{F}(I)) = H^0(\mc{F}(I)) = S(I)$. 
\end{cor}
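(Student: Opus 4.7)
My plan is to deduce the corollary directly from Proposition \ref{prop:calFext} by exploiting the symmetry of Koszul duality. Since $A^!$ is itself a Koszul $\rin$-algebra with left Koszul dual ${}^!(A^!) = A$, the construction of ${}^!\mc{F}$ applied to $A^!$ (in place of $A$) produces a functor $\grLmod{A^!} \to C(\grLmod{A})$ sending $N$ to $A \otimes_\rin N$ with the Koszul differential built from the coevaluation $\rin \to E \otimes_\rin E^*$. Comparing the definitions, this functor agrees with $\mc{F}$ on the nose. Applying Proposition \ref{prop:calFext} in this dual setting therefore yields, for every graded left $A^!$-module $N$, an isomorphism of graded $\rin$-modules
\[
H^i(\mc{F}(N)) \cong \underline{\Ext}^i_{A^!}(\rin, N).
\]

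Specialising to $N = I$ injective in $\grLmod{A^!}$, the higher graded Ext groups vanish and $\underline{\Ext}^0_{A^!}(\rin, I) = \underline{\Hom}_{A^!}(\rin, I) = S(I)$ by definition of $S$. To finish, I would check that this abstract identification is realised by the canonical map $n \mapsto 1 \otimes n$: since $\mc{F}(I)$ lives in non-negative cohomological degrees, $H^0(\mc{F}(I)) = \ker d^0$, and a short computation of $d^0(1 \otimes n) = \sum_\alpha x_\alpha \otimes \hat{x}_\alpha n$ shows that the kernel consists exactly of those $n$ with $\hat{x}_\alpha n = 0$ for all $\alpha$, i.e.\ $n \in S(I)$, since $\{\hat{x}_\alpha\}$ spans $(A^!)^1$ and $(A^!)^+$ is generated in degree one.

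The only nontrivial step is the dual reformulation of Proposition \ref{prop:calFext}: one must verify that the functor $\mc{F}$ built from $A$ acting on graded $A^!$-modules really coincides with the functor ${}^!\mc{F}$ built from the Koszul algebra $A^!$, both on underlying graded modules and with respect to the Koszul differential. This is essentially bookkeeping in the symmetric roles of $A$ and $A^!$, but it is the one place where one must carefully unwind the asymmetric-looking definitions and confirm that everything is genuinely symmetric under $A \leftrightarrow A^!$. Once this is in place, the rest of the argument is formal.
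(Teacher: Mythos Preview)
Your proposal is correct and follows essentially the same route as the paper: apply Proposition~\ref{prop:calFext} with $A^!$ in the role of the Koszul algebra (using ${}^!(A^!)=A$) to identify $H^i(\mc{F}(I))\cong\underline{\Ext}^i_{A^!}(\rin,I)$, invoke graded injectivity of $I$ to kill $i\neq 0$, and verify $H^0=S(I)$ by hand. One small imprecision: over a general semisimple $\rin$ the implication ``$\sum_\alpha x_\alpha\otimes\hat{x}_\alpha n=0\Rightarrow \hat{x}_\alpha n=0$ for each $\alpha$'' is not automatic; the paper instead pairs $d(1\otimes n)$ with an arbitrary $a\in E^*$ via evaluation to obtain $an=0$ directly, which is the cleaner way to finish.
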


\begin{proof}

By construction, $H^\idot(S(I))=H^0(S(I))=S(I)$. If $1\ok n\in \rin\ok I=\mc F(I)^0$ is a cocycle, then $n$ is in $S(I)$. In fact, for all $a\in E^*$, 

$$1\ok a n=\sum_\alpha 1\ok x_\alpha(a)\hat{x}_\alpha n=\sum_\alpha a(x_\alpha)\ok \hat{x}_\alpha n=(a\ok 1)d(1\ok n)=0. $$

Thus, it suffices to show that all the other cohomology groups are zero. Recall that $A = {}^! (A^!)$. Therefore, by Proposition~\ref{prop:calFext}, $H^i(\mc{F}(I)) \cong \underline{\Ext}^i_{A^!}(\rin,I)$ for all $i$. Since $I$ is graded injective, $\EXT^i_{A^!}(\rin,I)$ is zero for $i\neq0$ (see \cite[Corollary 2.4.8]{nastasescuMethodsGradedRings2004}).

\end{proof}

\subsection{A spectral sequence}\label{sec:spectral}

Let $I \in C(\Inj A^!)$. Recall from \eqref{eq:Ffiltration} that we have a filtration $\{ F_i(I) \}_{i \in \Z}$ on the complex $F(I)$ given by $F_i(I)^q = \Fi_{i+q} U \ok N^q$. In order to have a descending filtration, following the conventions in \cite[\href{https://stacks.math.columbia.edu/tag/012K}{Tag012K}]{stacks-project}, we define 
\[
F^i(I) := F_{-i} (I) =   \bigoplus_{q \in \Z} \Fi_{q-i} U \ot_k I^q \quad \textrm{with} \quad F^i(I)^p = \Fi_{p-i} U \ot_k I^p.
\]
Then $\gr^p F(I) = \gr_{-p} F(I)$ so that $\gr^p F(I)^q = A_{q-p} \ok I^q$. 

There is a spectral sequence associated to this filtration. The $0$th page is
\[
E_0^{p,q} = \gr^p F(I)^{p+q} = \mc{F}(I^{\#})^q_p. 
\]
The differential is 
\[
d_0^{p,q} \colon E_0^{p,q} = A_{q} \ot_{\rin} I^{p+q}  \to  A_{q+1} \ot_{\rin} I^{p+q+1} = E_0^{p,q+1}, \quad d_0^{p,q}(a \ot u) = \sum_{\alpha} a x_{\alpha} \ot \check{x}_{\alpha} u. 
\]
Then $E_1^{p,q} = H^{q}(\mc{F}(I^{\#})_p)$. Therefore, Corollary~\ref{cor:homokoszulcomplex} 
imply that $E_1^{p,q}= 0$ unless $q = 0$. When $q = 0$, $E_1^{p,0} = H^{0}(\mathcal{F}(I^{\#})_p) = S(I)^p$. The differential on $E_1^{p,q}$ is 
\[
d_1^{p,0} \colon E^{p,0}_1 = S(I)^p \to S(I)^{p+1} = E^{p+1,0}_1
\]
given by $d_I |_{S(I)}$, and $d_1^{p,q} = 0$ for $q \neq 0$. This implies that $E_2^{p,0} = H^p(S(I))$ and $E_2^{p,q} = 0$ otherwise. For $r \ge 2$, $d_r^{p,q} \colon E^{p,q}_r \to E^{p+r,-r+1 + q}_r$ must be zero for all $(p,q)$ because $E_r^{p,q} = 0$ for $q \neq 0$. Thus, $E_{\infty}^{p,0} = H^p(S(I))$ and $E_{\infty}^{p,q} = 0$ otherwise. 

We define a filtration on $S(I)$ as 
\[
S(I)_i = F^i(I) \cap S(I) = \bigoplus_{q \ge i} S(I)^q,
\]
so that $(S(I)^\idot,d_I |_{S(I)})$ becomes a complex of filtered $A^!$-modules. We get another spectral sequence $\{\mc{E}^{p,q}_r\}$ associated to it. Since $\gr_p S(I) = S(I)^{p}$, the first page is $\mc{E}_0^{p,q} = \gr_p S(I)^{p+q}$, which is equal to $S(I)^p$ for $q = 0$ and is zero otherwise. In particular, the differential $d_0\colon \mc{E}^{p,q+1}_0\to \mc{E}^{p,q+1}_0 $ is zero, and $\mc{E}_1^{p,q} = \mc{E}_0^{p,q}$. The differential on $\mc{E}_1^{\idot,0}=S(I)^\idot$ is just $d_I |_{S(I)}$. Thus, $\mc{E}_2^{p,0} = \mc{E}_{\infty}^{p,0} = H^p(S(I))$ and $\mc{E}_2^{p,q} = \mc{E}_{\infty}^{p,q} = 0$ otherwise. 

Now consider the morphism $\psi: S(I) \to F(I)$ given by $j \mapsto 1 \ot j$. With our choices of filtration, it is strictly filtered. On page zero, $\psi$ sends $\mc{E}^{p,0}_0=S(I)^p$ into $E^{p,0}_0=A_0 \ot_k I^p$ and is zero everywhere else. By Corollary \ref{cor:homokoszulcomplex}, $\psi$ is a quasi-isomorphism $\mc{E}^{\idot,q}_1 \to E_1^{\idot,q}$. Thus, we see that the map on page 2, and hence all subsequent pages, is an isomorphism (compatible with differentials). 

\begin{thm}\label{thm:quasiisoJF}
For any $I \in K(\Inj A^!)$, the morphism $S(I) \to F(I)$ is a quasi-isomorphism of complexes of $k$-modules. 
\end{thm}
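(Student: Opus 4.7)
The plan is to leverage the spectral sequence comparison theorem, using the machinery set up immediately before the statement. Nearly all the bookkeeping has already been done: the $E_0$, $E_1$ and $E_2$ pages of both spectral sequences have been computed, both degenerate at $E_2$ with identical $E_\infty$ terms $E_\infty^{p,0} = H^p(S(I))$, and $\psi$ has been checked to be a strictly filtered morphism inducing an isomorphism from page $2$ onwards. So the remaining task is just to push through the convergence argument and deduce the quasi-isomorphism statement on the abutments.

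First I would verify that both filtrations are \emph{regular} and \emph{exhaustive} so that the associated spectral sequences converge strongly to the cohomology of their abutments. On the $F(I)$ side, we have $F^{i}(I)^p = \Fi_{p-i}U \ot_\rin I^p$. Since the filtration on $U$ is non-negative, $F^{i}(I)^p = 0$ whenever $i > p$, so for each cohomological degree $p$ the filtration is bounded above in the filtration index; and $\bigcup_{i} F^{i}(I)^p = F(I)^p$ because the filtration on $U$ is exhaustive. Hence the filtration is regular and exhaustive, and the spectral sequence converges to $H^{p+q}(F(I))$ in the sense of \cite[Tag~012K]{stacks-project}. For the complex $S(I)$, the filtration $S(I)_i = \bigoplus_{q \ge i} S(I)^q$ is by definition the stupid filtration by cohomological degree, which is tautologically regular and exhaustive, and the associated spectral sequence converges to $H^{p+q}(S(I))$.

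Now I would apply the comparison theorem. The morphism $\psi \colon S(I) \to F(I)$ sending $j$ to $1 \ot j$ is strictly filtered by construction, so it induces a morphism of convergent spectral sequences. As recorded in the discussion preceding the statement, the induced map on page $1$ is exactly the quasi-isomorphism of complexes $S(I^{\#}) \to \mc{F}(I^{\#})$ provided by Corollary~\ref{cor:homokoszulcomplex} (here using that $I$ is injective), so $\psi$ induces an isomorphism from page $2$ onwards. Passing to $E_\infty$ and invoking the regularity/exhaustiveness verified above, it follows that $\psi$ induces an isomorphism on the associated graded pieces $\gr H^{n}(S(I)) \iso \gr H^{n}(F(I))$ with respect to the induced filtrations on cohomology. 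The induced filtrations inherit regularity from those on the complexes, and the Eilenberg--Moore / complete convergence comparison then upgrades this to an isomorphism $H^{n}(\psi) \colon H^{n}(S(I)) \iso H^{n}(F(I))$ in every degree.

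The only genuine subtlety is the convergence for unbounded $I$: because $U$ is infinite-dimensional, the filtration on $F(I)$ is not bounded below in the filtration index, so one cannot simply invoke a finite spectral sequence. However, the key observation is that regularity in each fixed cohomological degree (coming from the non-negativity of the filtration on $U$) is all that is required for strong convergence, and this is what bridges the bounded graded computation of Corollary~\ref{cor:homokoszulcomplex} to the general non-homogeneous statement; I expect this to be the main point to justify carefully.
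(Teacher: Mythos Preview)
Your proposal is correct and follows essentially the same approach as the paper: verify the hypotheses of the Eilenberg--Moore Comparison Theorem (Weibel, Theorem~5.5.11) and conclude. Your observation that $F^{i}(I)^p = \Fi_{p-i}U \otimes_{\rin} I^p = 0$ for $i > p$ is exactly what the paper uses to check completeness of the filtration (the paper writes out the inverse limit explicitly, but the content is the same); the paper also checks exhaustiveness and notes that degeneration at $E_2$ makes the spectral sequence regular, then cites Eilenberg--Moore directly rather than passing through the associated graded of cohomology as you do. One terminological caution: what you call ``regularity'' of the filtration is really completeness/Hausdorffness in each degree, whereas the regularity hypothesis in the comparison theorem refers to the spectral sequence (here guaranteed by $E_2$-degeneration); both ingredients are needed, and you have both, but keeping them distinct would tighten the write-up.
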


\begin{proof}
    We have shown that the induced morphism on spectral sequences is eventually an isomorphism. Note that both of these spectral sequences collapse on the second page and hence are bounded. Thus, they are regular. Clearly, the two sequences converge to the same limit $H^{\idot}(S(I))$. 

    Next, we note that the filtration $F^{\idot}(I)$ is exhaustive, $F(I) = \bigcup_{i \in \Z} F_i(I)$, since any element of $U \otimes I$ can be written as a finite sum $\sum u_j \ot n_j$ with $n_j$ homogeneous. Similarly, since $S(I) = \bigoplus_p S(I)^p$, any element of $S(I)$ belongs to a finite sum of the $S(I)^p$, implying that the filtration $S(I)_{\idot}$ is exhaustive. We claim that both filtrations are also complete. Indeed, 
    \[
    \left( \lim_{\infty \leftarrow p} F(I) / F^p(I) \right)^n = \lim_{\infty \leftarrow p} F(I)^n / F^p(I)^n = \lim_{\infty \leftarrow p} U \ot_{\rin} I^n / U_{n-p} \ot_{\rin} I^n = U \ot_{\rin} I^n = F(I)^n.
    \]
    Similarly, 
    \[
    \left( \lim_{\infty \leftarrow p} S(I) / S(I)_p \right)^n = \lim_{\infty \leftarrow p} S(I)^n / S(I)^n_p = S(I)^n
    \]
    since $S(I)^n_p = S(I)^n$ if $n \ge p$ and $S(I)^n_p = 0$ when $n < p$. 

    Therefore, $H^{\idot}(S(I)) \to H^{\idot}(F(I))$ is an isomorphism by the Eilenberg-Moore Comparison Theorem \cite[Theorem~5.5.11]{Weibel}.
\end{proof}

\subsection{Proof of the main Theorem} \label{proofofMainThm}

In this section we complete the proof of Theorem~\ref{thm:mainequivalence}. We define $\mathcal{N}:=\{I\in K(\Inj A^!) \, | \, S(I)\ \text{is acyclic} \}$. Since $\mathcal{N}$ is the kernel of the exact functor $S$, it is a thick triangulated subcategory of $K(\Inj A^!)$ and we may form the Verdier quotient $K(\Inj A^!) / \mathcal{N}$, which is again a triangulated category.

\begin{lem}\label{lem:imageG}
The image of $G: K(U) \to K(A^!)$ lies in $K(\Inj A^!)$ and $G$ sends acyclic complexes in $K(U)$ to $\mc{N}$. 
\end{lem}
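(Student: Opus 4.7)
The lemma asserts two things, and I would handle them separately. The key structural observation is that, as a right $A^!$-module, $T \cong U \otimes_k A^!$ is free, so the tensor-hom adjunction gives a natural identification of graded left $A^!$-modules (ignoring the differential)
\[
G(M) = \HOM_U(T, M) \cong \HOM_k(A^!, M^{\#}).
\]
In other words, after forgetting differentials, $G$ is the coinduction functor $\HOM_k(A^!, -)$ applied to the underlying graded $k$-module of $M$.

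\emph{First claim.} I would show that $\HOM_k(A^!, -) \colon \grLmod{k} \to \grLmod{A^!}$ is right adjoint to the forgetful functor $\grLmod{A^!} \to \grLmod{k}$. Since the forgetful functor is exact, its right adjoint preserves injective objects. Because $k$ is semisimple, every graded $k$-module is injective in $\grLmod{k}$, so in particular $M^{\#}$ is injective. Hence $G(M)^{\#}$ is an injective graded left $A^!$-module, and $G(M) \in K(\Inj A^!)$.

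\emph{Second claim.} The point is to identify $S(G(M))$ explicitly as a complex of $k$-modules. Using the convention $(a \cdot f)(b) = f(ba)$ for the left $A^!$-action induced by the right action of $A^!$ on $T$, an element $f \in G(M)$ lies in $S(G(M))$ if and only if $f$ vanishes on $A^! \cdot (A^!)^+ = (A^!)^+$, i.e.\ iff $f$ factors through the augmentation $A^! \twoheadrightarrow k$. Such an $f$ is determined by $f(1) \in M$, which gives a degree-preserving isomorphism of graded $k$-modules
\[
\varepsilon \colon S(G(M)) \xrightarrow{\sim} M, \qquad f \mapsto f(1).
\]
A short calculation using $d_G(f) = d_M f - (-1)^{|f|} f d_{A^!}$ together with $d_{A^!}(1) = 0$ shows $d_G(f)(1) = d_M(f(1))$, so $\varepsilon$ intertwines the differentials and is an isomorphism of complexes of $k$-modules. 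Therefore, if $M$ is acyclic then so is $S(G(M))$, and $G(M) \in \mathcal{N}$.

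\emph{Main obstacle.} There is no substantive obstacle; the argument is essentially formal, relying on the two basic adjunctions (hom-tensor, and forgetful vs.\ coinduction) combined with semisimplicity of $k$. The only care needed is bookkeeping the bimodule conventions and signs to confirm that $\varepsilon$ is compatible with the differentials; once the conventions are fixed this is immediate from $d_{A^!}(1) = 0$.
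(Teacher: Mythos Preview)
Your argument is essentially correct and, for the first claim, identical to the paper's: both use that $G(M)^{\#}\cong\HOM_k(A^!,M^{\#})$ is coinduced, hence injective because $k$ is semisimple. For the second claim the paper takes a slightly slicker route, computing
\[
H^i(S(G(M)))=\Hom_{K(A^!)}(k,G(M)[i])\cong\Hom_{K(U)}(F(k),M[i])=\Hom_{K(U)}(U,M[i])=H^i(M)
\]
directly from the $(F,G)$-adjunction and $F(k)=U$, whereas you unwind this into the explicit isomorphism $\varepsilon\colon S(G(M))\to M$, $f\mapsto f(1)$. Both amount to the same identification.

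There is, however, one genuine slip in your bookkeeping. The differential on $G(M)=\HOM_U(T,M)$ involves $d_T=d_\otimes+1\otimes d_{A^!}$, not just $d_{A^!}$; under the identification with $\HOM_k(A^!,M)$ this reads
\[
(d_G f)(b)=d_M f(b)-(-1)^{|f|}\Big(\textstyle\sum_\alpha x_\alpha\, f(\hat x_\alpha b)+f(d_{A^!}b)\Big).
\]
So evaluating at $b=1$ gives an extra term $-(-1)^{|f|}\sum_\alpha x_\alpha f(\hat x_\alpha)$. This term does vanish, but only because $f\in S(G(M))$ kills $(A^!)^+\ni\hat x_\alpha$; the fact that $d_{A^!}(1)=0$ alone is not enough. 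Once you add that one sentence, the check that $\varepsilon$ is a chain map is complete. The paper's adjunction argument sidesteps this sign-and-term bookkeeping entirely, which is its main advantage.
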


\begin{proof}
If $M \in K(U)$, then $G(M) = \HOM_{k}(A^!,M)$ so, by graded tensor-hom adjunction,
\[
\Hom_{\grLmod{A^!}}(-,\HOM_{\rin}(A^!,M)) = \Hom_{\grLmod{k}}(-,M).
\]
Since $k$ is semisimple, $M$ is an injective graded module and $\Hom_{\grLmod{k}}(-,M)$ is exact. Thus, $\Hom_{\grLmod{A^!}}(-,\HOM_{\rin}(A^!,M))$ is an exact functor on the category $\grLmod{A^!}$ of graded $A^!$-modules. This proves the first statement. Let $M \in K(U)$. Note that 
\begin{equation}
    H^{\idot}(S(G(M))) \cong H^{\idot}(M).
\end{equation} 
Indeed,  
\[
H^{i}(S(G(M))) = \Hom_{K(A^!)}(k,G(M)[i]) \simeq \Hom_{K(U)}(F(k),M[i]) = \Hom_{K(U)}(U,M[i]) = H^{i}(M). 
\]
Therefore, if $M$ is an acyclic complex then so too is $S(G(M))$. By the previous paragraph, $G(M)$ belongs to $K(\Inj A^!)$. Thus, $G(M) \in \mc{N}$ if $M$ is acyclic. 

\end{proof}

\begin{prop}\label{prop:FGadjoint}
The functors descend to an adjoint pair
\[
\begin{tikzcd}
F:  K(\Inj A^!) / \mc{N} \arrow[r,shift left] & \arrow[l,shift left] D(U) \colon G.
\end{tikzcd}
\]
\end{prop}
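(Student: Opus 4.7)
The plan is to invoke the standard descent principle for adjoint pairs through Verdier quotients, using the two facts already established: $G$ sends acyclic complexes in $K(U)$ into $\mc{N}$ (Lemma~\ref{lem:imageG}) and $F$ sends objects of $\mc{N}$ to acyclic complexes in $K(U)$ (a direct consequence of Theorem~\ref{thm:quasiisoJF}). Once these two closure conditions are in place, both the factorizations of the functors and the descent of the adjunction data are essentially formal.

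First, I would restrict the homotopy-level adjunction to an adjoint pair $F\colon K(\Inj A^!)\rightleftarrows K(U)\colon G$. This is well-defined because $G$ already takes values in $K(\Inj A^!)$ by Lemma~\ref{lem:imageG}, and since $K(\Inj A^!)\subseteq K(A^!)$ is a full triangulated subcategory, the adjunction isomorphism $\Hom_{K(U)}(F(I),M)\cong\Hom_{K(A^!)}(I,G(M))$ restricts to $\Hom_{K(U)}(F(I),M)\cong\Hom_{K(\Inj A^!)}(I,G(M))$ for $I\in K(\Inj A^!)$. Next, I would pass to the Verdier quotients on both sides. By Lemma~\ref{lem:imageG}, the composition $K(U)\xrightarrow{G}K(\Inj A^!)\to K(\Inj A^!)/\mc{N}$ annihilates the thick subcategory of acyclic complexes, so the universal property of the quotient $K(U)\to D(U)$ yields a unique triangulated functor $G\colon D(U)\to K(\Inj A^!)/\mc{N}$. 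Conversely, for $I\in\mc{N}$, Theorem~\ref{thm:quasiisoJF} provides a quasi-isomorphism $S(I)\to F(I)$, and since $S(I)$ is acyclic by the definition of $\mc{N}$, so too is $F(I)$. Hence $F$ maps $\mc{N}$ to acyclic complexes in $K(U)$, and the composition $K(\Inj A^!)\xrightarrow{F}K(U)\to D(U)$ factors through a unique triangulated functor $F\colon K(\Inj A^!)/\mc{N}\to D(U)$.

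Finally, I would descend the adjunction data. Writing $Q$ and $Q'$ for the quotient functors to $K(\Inj A^!)/\mc{N}$ and $D(U)$ respectively, the closure conditions give $Q\circ G\circ F=G\circ F\circ Q$ and $Q'\circ F\circ G=F\circ G\circ Q'$ at the level of the descended functors. Post-composing the unit $\eta\colon\id\to GF$ with $Q$ therefore produces a natural transformation $\id\to GF$ on $K(\Inj A^!)/\mc{N}$, and post-composing the counit $\varepsilon\colon FG\to\id$ with $Q'$ produces a natural transformation $FG\to\id$ on $D(U)$. The triangle identities for the descended pair are obtained by applying $Q$ and $Q'$ to the triangle identities for $(\eta,\varepsilon)$ on the homotopy categories. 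There is no real obstacle in this argument beyond verifying the two closure conditions, which has already been done; the remainder is a standard formal descent of adjunctions along Verdier localization.
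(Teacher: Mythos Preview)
Your proof is correct and follows essentially the same approach as the paper: both arguments factor $G$ and $F$ through the Verdier quotients using Lemma~\ref{lem:imageG} and Theorem~\ref{thm:quasiisoJF} respectively, and then descend the adjunction. The only difference is cosmetic: the paper packages the descent of the adjunction by citing an abstract result (\cite[Lemma~1.1.6]{KrauseBook}), after recording that $\mc{N}$ is exactly the $F$-preimage of acyclics, whereas you sketch the descent of the unit, counit, and triangle identities by hand via the $2$-categorical universal property of Verdier localization. Your two closure conditions $F(\mc{N})\subset\mr{Acyc}$ and $G(\mr{Acyc})\subset\mc{N}$ are already enough for this, so the extra equality the paper notes is not needed here (it is used later for conservativity of $F$).
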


\begin{proof}
Let $\mr{Acyc} \subset K(U)$ be the full subcategory of acyclic complexes. Lemma~\ref{lem:imageG} says that the functor $G \colon K(U) \to K(\Inj A^!) \to K(\Inj A^!) / \mc{N} $ sends acyclic complexes to zero. Therefore, it factors uniquely through $D(U) = K(U) / \mr{Acyc}$, the Verdier localisation at acyclic complexes $\mr{Acyc}$. Next, if $N \in \mc{N}$ then Theorem~\ref{thm:quasiisoJF} implies that $F(N)$ is acyclic. Therefore, the functor $F \colon K(\Inj A^!) \to K(U) \to D(U)$ also factors through the Verdier localisation $K(\Inj A^!) / \mc{N}$. 

To see that the adjunction holds, we note that Theorem~\ref{thm:quasiisoJF} implies that 
\begin{equation}\label{eq:kernelFisN}
   \mc{N} = \{ I \in K(\Inj A^!) \, | \, F(I) \in \mr{Acyc} \}. 
\end{equation}
Therefore, adjunction follows from the abstract result \cite[Lemma 1.1.6]{KrauseBook}. 

\end{proof}

Equality \eqref{eq:kernelFisN} implies:

\begin{prop} \label{prop:Fconservative}
The functor $F \colon K(\Inj A^!) / \mc{N} \to D(U)$ is conservative. 
\end{prop}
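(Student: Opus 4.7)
The plan is to reduce conservativity to the already-established characterisation of $\mc{N}$ given in equation \eqref{eq:kernelFisN}. Recall that a triangulated functor $F \colon \mc{C}_1 \to \mc{C}_2$ between triangulated categories is conservative if and only if it has trivial kernel, i.e.\ $F(X) \simeq 0$ in $\mc{C}_2$ forces $X \simeq 0$ in $\mc{C}_1$. Indeed, one direction is obvious; for the other, if $F(f) \colon F(X) \to F(Y)$ is an isomorphism, one embeds $f$ in a distinguished triangle $X \to Y \to C \to X[1]$ and applies $F$ to conclude that $F(C) \simeq 0$, hence $C \simeq 0$ by hypothesis, and so $f$ is an isomorphism. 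Thus it suffices to show that if $I \in K(\Inj A^!)/\mc{N}$ satisfies $F(I) \simeq 0$ in $D(U)$, then $I$ is a zero object.

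Next, I would unpack both vanishing conditions. An object of $D(U)$ is isomorphic to zero exactly when the underlying complex is acyclic in $K(U)$. On the other side, because $\mc{N}$ is a thick triangulated subcategory of $K(\Inj A^!)$, an object $I$ becomes isomorphic to zero in the Verdier quotient $K(\Inj A^!)/\mc{N}$ if and only if $I$ already belongs to $\mc{N}$ as an object of $K(\Inj A^!)$. Consequently, conservativity of $F$ is equivalent to the implication
\[
F(I) \in \mr{Acyc} \quad \Longrightarrow \quad I \in \mc{N}.
\]

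This implication is precisely the nontrivial direction of equality \eqref{eq:kernelFisN}, which was obtained as a consequence of Theorem~\ref{thm:quasiisoJF}: the canonical map $S(I) \to F(I)$ is a quasi-isomorphism of complexes of $\rin$-modules, so if $F(I)$ is acyclic then so is $S(I)$, which by the very definition of $\mc{N}$ means that $I \in \mc{N}$. There is no real obstacle in this argument, since all the hard work sits in Theorem~\ref{thm:quasiisoJF}; the conservativity assertion is essentially a repackaging of that result in the language of Verdier quotients, and the only thing to verify is the abstract lemma on triangulated functors with trivial kernel.
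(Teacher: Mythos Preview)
Your proof is correct and takes essentially the same approach as the paper: the paper simply states that equality \eqref{eq:kernelFisN} implies the proposition, and you have spelled out the standard reduction (conservativity of a triangulated functor is equivalent to trivial kernel, and vanishing in the Verdier quotient means lying in $\mc{N}$) that makes this implication explicit.
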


Next, we consider the functor $G$. 

\begin{prop}\label{prop:fullyfaithfullGundboundedquot}
The functor $G \colon D(U) \to K(\Inj A^!) / \mc{N}$ is fully faithful.
\end{prop}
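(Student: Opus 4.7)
The plan is to invoke the standard characterization of fully-faithful right adjoints: since $F$ is left adjoint to $G$ (Proposition~\ref{prop:FGadjoint}), $G$ is fully faithful if and only if the counit $\epsilon \colon FG \to \id_{D(U)}$ is a natural isomorphism. So the task reduces to checking that $\epsilon_M$ is an isomorphism in $D(U)$ for every $M \in D(U)$.

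For this we appeal directly to Theorem~\ref{thm:FGquasiK}, which asserts that for any $M \in C(U)$ the chain-level counit $FG(M) \to M$ is a quasi-isomorphism of complexes of $U$-modules. Since $D(U)$ is the Verdier localisation of $K(U)$ at quasi-isomorphisms, this chain-level counit descends to an isomorphism in $D(U)$. One needs to verify that the counit of the descended adjunction $F \dashv G$ between $K(\Inj A^!)/\mc{N}$ and $D(U)$ is indeed represented by the chain-level map $FG(M) \to M$; but this is immediate from the construction in Proposition~\ref{prop:FGadjoint}, since the adjunction is obtained by localising an adjunction at the chain level and passing to homotopy categories.

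Combining these ingredients, $\epsilon_M$ is an isomorphism in $D(U)$ for every $M$, so $G$ is fully faithful. No further work is required since the hard analytic content (the quasi-isomorphism $FG(M) \simeq M$ for unbounded $M$) has already been packaged into Theorem~\ref{thm:FGquasiK}. The only potentially delicate point is the bookkeeping to confirm that the localised adjunction's counit agrees with the chain-level counit, but this is a formal consequence of the universal property of Verdier localisation applied to both $K(U) \to D(U)$ and $K(\Inj A^!) \to K(\Inj A^!)/\mc{N}$.
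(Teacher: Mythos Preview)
Your proposal is correct and follows essentially the same approach as the paper: both invoke the adjunction from Proposition~\ref{prop:FGadjoint}, use Theorem~\ref{thm:FGquasiK} to conclude that the counit $FG \to \id$ is an isomorphism in $D(U)$, and deduce full faithfulness of $G$ from the standard characterization of fully faithful right adjoints. Your additional remark about the localized counit agreeing with the chain-level counit is a reasonable point of care that the paper leaves implicit.
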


\begin{proof}
    Proposition~\ref{prop:FGadjoint} and Theorem~\ref{thm:FGquasiK} together imply that the counit $FG \to 1$ is an isomorphism in $D(U)$, which implies that $G$ is fully faithful. 
\end{proof}

\begin{thm}\label{thm:main}
The functors $F$ and $G$ induce inverse triangulated equivalence $D(U) \simeq K(\Inj A^!)/\mathcal{N}$.

\end{thm}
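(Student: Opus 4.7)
The plan is to assemble Theorem~\ref{thm:main} from the three results just established: Proposition~\ref{prop:FGadjoint} giving the adjoint pair $(F,G)$ between $K(\Inj A^!)/\mathcal{N}$ and $D(U)$, Proposition~\ref{prop:Fconservative} asserting that $F$ is conservative, and Proposition~\ref{prop:fullyfaithfullGundboundedquot} asserting that $G$ is fully faithful. Since both $F$ and $G$ are already triangulated (they are induced from additive functors on complexes that respect mapping cones), the statement reduces to the abstract categorical fact already quoted in the introduction: a conservative triangulated functor with a fully faithful right adjoint is an equivalence. Concretely, this is \cite[Proposition~3.2.9]{KrauseBook}, applied with $\mathcal{C}_1=K(\Inj A^!)/\mathcal{N}$, $\mathcal{C}_2=D(U)$, and the adjunction $F\dashv G$.

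To make the argument self-contained, I would spell out the final step as follows. Full faithfulness of $G$ is equivalent to the counit $\epsilon\colon FG\Rightarrow \mathrm{id}_{D(U)}$ being an isomorphism; this is exactly what Theorem~\ref{thm:FGquasiK} gives, and it was used to prove Proposition~\ref{prop:fullyfaithfullGundboundedquot}. It remains only to show that the unit $\eta\colon \mathrm{id}_{K(\Inj A^!)/\mathcal{N}}\Rightarrow GF$ is an isomorphism. For any object $I\in K(\Inj A^!)/\mathcal{N}$, the triangle identity $\epsilon_{F(I)}\circ F(\eta_I)=\mathrm{id}_{F(I)}$ shows that $F(\eta_I)$ is an isomorphism in $D(U)$, because $\epsilon_{F(I)}$ is an isomorphism by the previous sentence. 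Completing $\eta_I$ to a distinguished triangle and applying the exact, conservative functor $F$ (Proposition~\ref{prop:Fconservative}), the cone of $\eta_I$ must be zero in $K(\Inj A^!)/\mathcal{N}$, so $\eta_I$ is itself an isomorphism.

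Since both the unit and counit of the adjunction $F\dashv G$ are natural isomorphisms, $F$ and $G$ are mutually inverse triangulated equivalences, which is the desired statement. No step in this finale is a genuine obstacle, since every ingredient has already been secured; the real work was done in Theorem~\ref{thm:FGquasiK} (full faithfulness of $G$) and Theorem~\ref{thm:quasiisoJF} (which provided the identification $\mathcal{N}=\Ker F$ needed to construct the adjunction on the quotient and to get conservativity of $F$).
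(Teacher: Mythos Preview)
Your proposal is correct and follows essentially the same approach as the paper: you invoke Propositions~\ref{prop:FGadjoint}, \ref{prop:Fconservative}, and \ref{prop:fullyfaithfullGundboundedquot} and then appeal to \cite[Proposition~3.2.9]{KrauseBook}, exactly as the paper does. Your additional unpacking of the unit/counit argument is fine but not required, since the paper simply cites the abstract result.
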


\begin{proof}
By \Cref{prop:FGadjoint}, \Cref{prop:Fconservative} and \Cref{prop:fullyfaithfullGundboundedquot}, $F$ is a conservative functor with a fully faithful right adjoint. It follows that $F$ is an equivalence, by e.g. \cite[Proposition~3.2.9]{KrauseBook}. 
\end{proof}

\begin{example}\label{ex:dualnumbers}
    If $U = \Bbbk[\epsilon] / (\epsilon^2)$ so that $A^! = \Bbbk[x]$ then $\mathcal{N}$ contains objects other than $0$. Indeed, if $I = \Bbbk[x,x^{-1}]$ as in \Cref{rem:examplebad} then $I$ is non-zero as an object of $K(\Inj A^!)$ but $F(I)$ is the acyclic complex $\cdots \to U \stackrel{\epsilon \cdot -}{\rightarrow} U \to \cdots $. 
\end{example}

\subsection{Locally nilpotent modules}\label{sec:locallynilp}

In this section, we study the category of ``locally nilpotent cdg-modules'' and show that its intersection with $\mc N$ is zero. This will be used in the next two sections, where we refine Theorem \ref{thm:main}. In \cite{positselskiRelativeNonhomogeneousKoszul2021}, these modules are known as comodules.

As in Section~\ref{sec:cdga}, let $(\Lambda,d,c)$ be a \coconnective curved dga. A $\Lambda$-module $M$ is said to be \textit{locally nilpotent} if for each $m \in M$ there exists $\ell > 0$ such that $\Lambda_+^\ell \cdot m= 0$. The full subcategory of $C(\Lambda)$ consisting of cdg-modules whose underlying module is nilpotent is denoted $C(\Lambda)_{\mathrm{nil}}$. If the graded algebra $\Lambda$ is bounded then $C(\Lambda)_{\mathrm{nil}} = C(\Lambda)$. If $M$ is locally nilpotent then $M = 0$ in $C(\Lambda)$ if and only if $S(M) = 0$. Moreover, for $M$ locally nilpotent, every element $m\in M$ is contained in a bounded submodule $\Lambda.m$. In particular, $M$ is a countable union of bounded submodules.

Let $K(\Inj \Lambda)_{\nil}$ denote the full subcategory of $K(\Inj \Lambda)$ consisting of objects isomorphic to the image of some $I \in C(\Lambda)_{\mathrm{nil}}$. Then, just as in the proof of Lemma~\ref{lem:Injtriangulated}, one can check that $K(\Inj \Lambda)_{\nil}$ is closed under summands, shifts, and triangles and hence is a thick subcategory of $K(\Inj \Lambda)$.

\begin{prop}\label{prop:injjacyclic}
Let $I \in C(\Inj \Lambda)_{\nil}$. Then $I = 0$ in $K(\Inj \Lambda)_{\nil}$ if and only if $S(I)$ is acyclic. In particular, $K(\Inj \Lambda)_{\nil}\cap \mc N=0$.
\end{prop}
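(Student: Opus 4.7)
The ``only if'' direction is immediate: any contracting homotopy $s\colon I\to I$ is $\Lambda$-linear of degree $-1$, so $s$ preserves $S(I)$ (indeed $a\,s(m)=\pm s(am)=0$ for $a\in \Lambda_+$, $m\in S(I)$) and restricts to a contracting homotopy of the $k$-complex $S(I)$.

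For the ``if'' direction, the plan is to filter $I$ by the socle filtration $S^n(I):=\{m\in I\mid \Lambda_+^n\cdot m=0\}$ and reduce the vanishing of $\Hom_{K(\Lambda)}(I,I)$ to the vanishing of $H^\ast(S(I))$. Local nilpotence gives $I=\bigcup_n S^n(I)$. A short Leibniz-rule induction shows that $d$ preserves each $S^n(I)$; the key point is that for $a_i\in \Lambda_+$ one has $d(a_i)\in \Lambda_{\ge 2}\subseteq \Lambda_+$ for degree reasons, so $a_nd(m)$ can be rewritten as $\pm d(a_nm)\mp d(a_n)m$, with $a_nm\in S^{n-1}(I)$ ($d$-stable by induction) and $d(a_n)\in \Lambda_+$ (so $a_1\cdots a_{n-1}d(a_n)\in\Lambda_+^n$ annihilates $m$). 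Moreover, $S^n(I)/S^{n-1}(I)$ is annihilated by $\Lambda_+$ and so is a $k$-module, on which the induced differential squares to zero (since $c\in \Lambda_+$ forces $d^2 m=cm\in \Lambda_+\cdot S^n(I)\subseteq S^{n-1}(I)$).

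Since $k$ is semisimple, every complex of $k$-modules is $\Lambda$-linearly homotopy equivalent, via the pullback along $\Lambda\to k$, to its cohomology with zero differential. Hence in $K(\Lambda)$ each $S^n(I)/S^{n-1}(I)$ is a direct sum of shifts of $k$-modules. The hypothesis that $S(I)$ is acyclic reads $\Hom_{K(\Lambda)}(k, I[r]) = H^r(S(I)) = 0$ for all $r$, and exactness of $\Hom_k(-,S(I))$ extends this to $\Hom_{K(\Lambda)}(V, I[r]) = 0$ for any $k$-module $V$. In particular $H^{\ast}\HOM_\Lambda(S^n(I)/S^{n-1}(I), I) = 0$. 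Applying $\HOM_\Lambda(-,I)$ to the short exact sequence $0\to S^{n-1}(I)\to S^n(I)\to S^n(I)/S^{n-1}(I)\to 0$ yields a short exact sequence of complexes, exact on the right because $I$ is graded-injective; the associated long exact sequence in cohomology, together with induction on $n$, gives $H^\ast\HOM_\Lambda(S^n(I), I) = 0$ for all $n$.

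To pass to the colimit $I = \varinjlim_n S^n(I)$, I will use $\HOM_\Lambda(I,I) = \varprojlim_n \HOM_\Lambda(S^n(I), I)$ together with the fact that the restriction maps $\HOM_\Lambda(S^{n+1}(I),I)\to \HOM_\Lambda(S^n(I),I)$ are degreewise surjective (again by the graded-injectivity of $I$). The Milnor exact sequence for this tower then forces $\Hom_{K(\Lambda)}(I,I) = H^0 \HOM_\Lambda(I,I) = 0$, so $\id_I = 0$ in $K(\Lambda)$ and $I$ is contractible. I expect the main obstacle to be the bookkeeping for the $d$-stability of the socle filtration and the vanishing of the induced $d^2$ on the subquotients; once these are in place, the tower argument is routine. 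The final clause follows at once: any $I\in K(\Inj\Lambda)_{\nil}\cap \mathcal{N}$ has $S(I)$ acyclic by definition of $\mathcal{N}$, and hence vanishes by the biconditional.
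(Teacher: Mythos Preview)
Your proof is correct and follows the same overall strategy as the paper: show $H^\ast\HOM_\Lambda(I,I)=0$ by filtering $I$ exhaustively by curved dg-submodules whose subquotients reduce to $k$-modules, use graded-injectivity of $I$ to get short exact sequences of Hom-complexes, induct, and then pass to the colimit via a Mittag--Leffler/Milnor argument.

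The one genuine difference is the choice of filtration. The paper filters by \emph{bounded support}: $M^{(n)}$ is the largest cdg-submodule supported in $[-n,n]$. This forces a two-step induction---first on the size of the support (to handle bounded modules, using that a module concentrated in a single degree is a sum of shifts of summands of $k$), then the countable-union argument. Your \emph{socle filtration} $S^n(I)=\{m\mid \Lambda_+^n m=0\}$ is more efficient here: the subquotients $S^n(I)/S^{n-1}(I)$ are already $k$-modules, so the inner induction on support length disappears. The trade-off is that you must verify $d$-stability of the $S^n(I)$ by the Leibniz argument you gave (which is fine), whereas the paper's filtration is trivially $d$-stable since $d$ has degree $+1$. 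In exchange, the paper's argument proves the slightly stronger statement that $\HOM_\Lambda(M,I)$ is acyclic for \emph{every} $M\in C(\Lambda)_{\nil}$, not just $M=I$; your argument gives this too if one replaces $I$ by an arbitrary locally nilpotent $M$ in the first slot.

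Your ``only if'' direction via restricting the contracting homotopy is more direct than the paper's, which instead deduces it from $S$ being a well-defined functor on the homotopy category.
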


\begin{proof}
Let $I \in C(\Inj \Lambda)_{\nil}$ and assume that $S(I)$ is acyclic. We define $C(\Lambda)_I$ to be the full subcategory of $C(\Lambda)_{\nil}$ consisting of modules $M$ such that $\HOM_{\Lambda}(M,I)$ is acyclic. By assumption, $\rin \in C(\Lambda)_I$. We will show that $C(\Lambda)_I = C(\Lambda)_{\nil}$. Note that $C(\Lambda)_I$ is closed under shifts and summands. We claim that it is also closed under arbitrary coproducts. Indeed, the category of locally nilpotent graded modules is closed under arbitrary coproducts. Now, let $\{ M(j) \}_{j \in J}$ be a collection of objects in $C(\Lambda)_I$. Then, for each $r \in \Z$, 
\[
   \HOM_{\Lambda}\Big( \bigoplus_{j \in J} M(j), I\Big)^r  = \Hom_{\grLmod{\Lambda}} \Big( \big(\bigoplus_{j \in J} M(j) \big), I[r] \Big) = \prod_{j \in J} \Hom_{\grLmod{\Lambda}} ( M(j), I[r]),
   \]
   which shows that $\HOM_{\Lambda}\big( \bigoplus_{j \in J} M(j), I\big)$ is the term-wise product of the acyclic complexes $\HOM_{\Lambda}(M(j), I)$. This is acyclic, see \cite[Exercise~1.2.1]{Weibel}. 

Assume next that there is a short exact sequence 
\[
0 \to N \to M \to L \to 0 
\]
in $C(\Lambda)$ with $N,L \in C(\Lambda)_I$. Since $I^\#$ is injective, applying $\HOM_{\Lambda}(- ,I)$ gives a short exact sequence of complexes and hence a long exact sequence in cohomology. This forces $M \in C(\Lambda)_I$ too i.e. $C(\Lambda)_I$ is closed under (3) extensions. We will show that it is also closed under (4) countable unions. By this we mean that if $M \in C(\Lambda)$ with $M^{(1)} \subset M^{(2)} \subset \cdots \subset M$, $M = \bigcup_{i \ge 1} M^{(i)}$ and $M^{(i)} \in C(\Lambda)_I$ for all $i$ then $M \in C(\Lambda)_I$. Consider the diagram of complexes
\[
\dots \to \HOM_\Lambda(M^{(1)},I) \to \HOM_\Lambda(M^{(0)},I) 
\]
Its limit is $\HOM_\Lambda(M,I)$ since representable functors commute with colimits. Each of the maps is epi since $I$ is injective. This implies that it satisfies the Mittag-Leffler condition \cite[(3.5.6)]{Weibel}. Therefore, by \cite[Theorem~3.5.8]{Weibel}, the vanishing of $H^\idot(\HOM_\Lambda(M^{(i)},I))$ implies the vanishing $H^\idot(\HOM_\Lambda(M,I))$. Therefore $\HOM_\Lambda(M,I)$ is acyclic. 

Next, assume that $M \in C(\Lambda)$ is concentrated in a single degree. This means that the action of $\Lambda$ on $M$ factors through $\rin$. Since the latter is semisimple, $M$ is a shift of a sum of summands of copies of $\rin$ and so $M \in C(\Lambda)_I$.  Next suppose $M_i = 0$ for all $|i| > N$ then we again claim that $M \in C(\Lambda)_I$. Indeed, if $|\Supp \, M | = n$ (here $\Supp \, M = \{ i \in \Z | M_i \neq 0\}$) and $M_{\ell} \neq 0$ but $M_{j} = 0$ for all $j > {\ell}$ then we have a short exact sequence
\[
0 \to M_{\ell} \to M \to N \to 0
\]
in $C(\Lambda)$ since $\Lambda$ is connected graded. Since $|\Supp N | \le n-1$ we have $N \in C(\Lambda)_I$ by induction. Hence so too is $M$.  

Finally, if $M$ is arbitrary, let $M^{(n)}$ be the largest cdg-submodule (equivalently, the sum of all cdg-submodules) whose support $\Supp \, M^{(n)} \subset [-n,n]$. Since $M$ is assumed to be locally nilpotent, $M = \bigcup_{n \in \N} M^{(n)}$. Since each $M^{(n)} \in C(\Lambda)_I$ and $C(\Lambda)_I$ is closed under countable unions we conclude that $M \in C(\Lambda)_I$. This proves the claim that $C(\Lambda)_I = C(\Lambda)_{\nil}$. In particular, $I \in C(\Lambda)_I$.  

Then, since $H^0(\HOM_\Lambda(M,I)) = \Hom_{K(\Lambda)}(M,I)$, we have $\Hom_{K(\Lambda)}(I,I) = 0$. This implies that $I \simeq 0 \in K(\Lambda)$. 
\end{proof}

Proposition~\ref{prop:injjacyclic} implies that the functor $S \colon K(\Inj \Lambda) \to D(k)$ is conservative on $K(\Inj \Lambda)_{\nil}$. 

\begin{remark}\label{rem:examplebad}
    If $\Lambda$ is a \coconnective cdga that is not bounded, then Proposition~\ref{prop:injjacyclic} fails for modules $I \in K(\Inj \Lambda)$ that are not locally nilpotent. For instance, let $\rin = \Bbbk$ be a field and $\Lambda = k[x]$, where $x$ is in degree one, with trivial differential and zero curvature. Then $I = k[x,x^{-1}]$ is injective in the category of graded $\Lambda$-modules \cite[1.2.4~Lemma]{GradedRingTheory} and can be thought of as a curved module with zero differential. Then $\underline{\End}_{\Lambda}(I,I)\cong k[t,t^{-1}]$ as graded module, with trivial differential. Hence $\End_{K(\Lambda)}(I) = H^0(k[t,t^{-1}])= k$. But $\underline{\Hom}_{\Lambda}(k,I) = 0$. 
\end{remark} 

\subsection{Complexes bounded above}
We return to the setting of filtered Koszul duality. 

Let $K^-(U)$ (resp. $K^-(\Lambda)$) be the full subcategory of $K(U)$ (resp. of $K(A^!)$) consisting of all complexes (resp. $A^!$-modules) $M$ such that $(M^{\#})_i = 0$ for all $i \ge N$ and some $N \in \Z$. 

\begin{lem}\label{lem:imageG2Noeth}
 The functor $G$ sends acyclic complexes in $K^-(U)$ to zero. 
\end{lem}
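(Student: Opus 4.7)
The plan is to combine the formula $H^i(S(G(M))) \cong H^i(M)$ established in the proof of Lemma~\ref{lem:imageG} with the criterion of Proposition~\ref{prop:injjacyclic}, which says that an object in $K(\Inj A^!)_{\nil}$ whose socle complex is acyclic must be zero. The extra input that is not available in the unbounded case is that boundedness of $M$ forces $G(M)$ to be \emph{locally nilpotent}, and this is precisely what unlocks Proposition~\ref{prop:injjacyclic}.

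Fix $M \in K^-(U)$ acyclic, and choose $N \in \Z$ with $M^j = 0$ for all $j > N$. By Lemma~\ref{lem:imageG}, $G(M)$ lies in $K(\Inj A^!)$ and $S(G(M))$ is acyclic, since $H^i(S(G(M))) \cong H^i(M) = 0$. Thus the only thing left to verify is that $G(M)$ is locally nilpotent; once established, Proposition~\ref{prop:injjacyclic} gives $G(M) = 0$ in $K(\Inj A^!)_{\nil}$, and hence in $K(A^!)$ (since $K(\Inj A^!)_{\nil}$ is a full subcategory).

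To see local nilpotency, I would apply Proposition~\ref{prop:gradedkhom}: since $A^!$ is connected graded (hence bounded below) and $M$ is bounded above, there is a graded isomorphism of $A^!$-modules $G(M) = \HOM_{\rin}(A^!,M) \cong {}^*\!A^! \ok M$. The graded dual ${}^*\!A^!$ is concentrated in non-positive degrees because $A^!$ is concentrated in non-negative degrees, so
\[
G(M)^n \;\cong\; \bigoplus_{a+b=n} ({}^*\!A^!)^a \ok M^b
\]
vanishes whenever $n > N$. Consequently, for any homogeneous $f \in G(M)^n$ and any $\ell > N - n$, we have $(A^!)_+^\ell \cdot f \subseteq \bigoplus_{j \ge \ell} G(M)^{n+j} = 0$, so $f$ is annihilated by a power of the augmentation ideal. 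Therefore $G(M) \in C(\Inj A^!)_{\nil}$, and Proposition~\ref{prop:injjacyclic} finishes the proof.

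There is no real obstacle here: the substantive work has already been done in Proposition~\ref{prop:injjacyclic}. The only point requiring care is the correct application of Proposition~\ref{prop:gradedkhom} and bookkeeping with the grading conventions (positive grading on $A^!$ vs.\ non-positive grading on ${}^*\!A^!$), which is exactly what makes the bounded-above hypothesis translate into local nilpotency on the dual side.
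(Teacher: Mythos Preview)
Your proof is correct and follows essentially the same route as the paper: use Lemma~\ref{lem:imageG} to see $G(M)\in K(\Inj A^!)$ with $S(G(M))$ acyclic, invoke Proposition~\ref{prop:gradedkhom} to get $G(M)$ bounded above (hence locally nilpotent), and then apply Proposition~\ref{prop:injjacyclic}. The paper states this in one line while you spell out the grading bookkeeping, but the argument is the same.
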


\begin{proof}
   Let $M \in K^-(U)$. By Proposition~\ref{prop:gradedkhom}, $G(M)$ is bounded above, so it belongs to $K^-(\Inj A^!)$. But Lemma~\ref{lem:imageG} says that $G(M)$ also belongs to $\mc{N}$. Therefore, since bounded above modules are locally nilpotent, it suffices to note that Proposition~\ref{prop:injjacyclic} says that the intersection $K(A^!)_{\nil} \cap \mc{N}$ is zero. 
\end{proof}

\begin{lem} \label{lem:FconservativeNoethbound}
The functors $(F,G)$ descend to an adjoint pair
\[
\begin{tikzcd}
F:  K^-(\Inj A^!) \arrow[r,shift left] & \arrow[l,shift left] D^-(U) \, \colon G.
\end{tikzcd}
\]
Moreover, the functor $F: K(\Inj A^!)_{\nil} \to D(U)$ is conservative and the functor $G \colon D^-(U) \to K^-(\Inj A^!)$ is fully faithful. 
\end{lem}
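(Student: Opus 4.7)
My plan is to deduce each of the three assertions directly from results already proved, exploiting the fact that in both the bounded-above setting of (1) and (3) and the locally nilpotent setting of (2), the obstruction subcategory $\mc{N}$ of Theorem~\ref{thm:main} is rendered invisible by Proposition~\ref{prop:injjacyclic}.

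For the descent in part (1), I would first check that the functors preserve the boundedness conditions. The functor $F(N)=U\ok N$ has the same cohomological support as $N$, so $F$ sends $K^-(\Inj A^!)$ into $K^-(U)$, hence into $D^-(U)$. For $G$, since $A^!$ is connected graded, and in particular bounded below, Proposition~\ref{prop:gradedkhom} gives an isomorphism $G(M)=\HOM_{\rin}(A^!,M)\cong {}^*A^!\ok M$ of graded modules whenever $M$ is bounded above; this exhibits $G(M)$ as bounded above, and Lemma~\ref{lem:imageG} ensures it is injective. The adjunction descent itself then follows exactly as in Proposition~\ref{prop:FGadjoint}: Lemma~\ref{lem:imageG2Noeth} guarantees that $G$ annihilates acyclic complexes in $K^-(U)$, so $G$ factors through the Verdier quotient $D^-(U)$, and \cite[Lemma~1.1.6]{KrauseBook} delivers the adjunction.

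For the conservativity in part (2), I would take $I\in K(\Inj A^!)_{\nil}$ with $F(I)\cong 0$ in $D(U)$, so that $F(I)$ is acyclic as a complex of $U$-modules. Theorem~\ref{thm:quasiisoJF} provides a quasi-isomorphism $S(I)\to F(I)$, forcing $S(I)$ to be acyclic as well. Proposition~\ref{prop:injjacyclic}, whose hypothesis of local nilpotency is precisely what we have, then yields $I\cong 0$ in $K(\Inj A^!)_{\nil}$.

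For the full faithfulness in part (3), I would invoke the standard categorical fact (used already in the proof of Proposition~\ref{prop:fullyfaithfullGundboundedquot}) that the right adjoint of an adjoint pair is fully faithful precisely when the counit is an isomorphism. Theorem~\ref{thm:FGquasiK} provides exactly this: the counit $FG(M)\to M$ is a quasi-isomorphism for every $M\in C(U)$, hence an isomorphism in $D^-(U)$ for every $M\in D^-(U)$. I do not anticipate a substantial obstacle at any step; the content of the lemma is essentially a repackaging of Theorems~\ref{thm:FGquasiK} and \ref{thm:quasiisoJF}, with the bounded-above and locally nilpotent hypotheses serving only to replace the Verdier quotient $K(\Inj A^!)/\mc{N}$ of Theorem~\ref{thm:main} by the simpler subcategories $K^-(\Inj A^!)$ and $K(\Inj A^!)_{\nil}$ via Proposition~\ref{prop:injjacyclic}.
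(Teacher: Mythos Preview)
Your proposal is correct and follows essentially the same route as the paper's proof: adjunction via the argument of Proposition~\ref{prop:FGadjoint} with Lemma~\ref{lem:imageG2Noeth} in place of Lemma~\ref{lem:imageG}, conservativity via Theorem~\ref{thm:quasiisoJF} and Proposition~\ref{prop:injjacyclic}, and full faithfulness via the counit. The only cosmetic difference is in part~(3): the paper simply observes that $G$ was already shown to be fully faithful on all of $D(U)$ (Proposition~\ref{prop:fullyfaithfullGundboundedquot}) and restricts, whereas you rerun the counit argument in the bounded-above setting; both are valid, yours is just slightly more self-contained.
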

\begin{proof}
That the pair is adjoint follows by repeating the proof of Proposition~\ref{prop:FGadjoint}, but with Lemma~\ref{lem:imageG2Noeth} replacing Lemma~\ref{lem:imageG}. The functor $F$ is conservative because of Proposition \ref{prop:injjacyclic} and Theorem \ref{thm:quasiisoJF}. The functor $G$ is fully faithful on all of $D(U)$, so it is fully faithful on a subcategory.
\end{proof}

Since Lemma~\ref{lem:FconservativeNoethbound} says that $F$ is a conservative functor with a fully faithful right adjoint $G$, it follows that $F$ is an equivalence, by e.g., \cite[Proposition~3.2.9]{KrauseBook}. 

\begin{cor}\label{cor:main2abound}
     The functors $F$ and $G$ induce inverse equivalence $D^-(U) \simeq K^-(\Inj A^!)$.
\end{cor}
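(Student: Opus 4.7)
The plan is to assemble the three ingredients already provided by Lemma~\ref{lem:FconservativeNoethbound} and then invoke the standard abstract criterion \cite[Proposition~3.2.9]{KrauseBook}: a conservative triangulated functor with a fully faithful right adjoint is an equivalence. Lemma~\ref{lem:FconservativeNoethbound} directly produces: (i) an adjoint pair $F \colon K^-(\Inj A^!) \rightleftarrows D^-(U) \colon G$; (ii) fully faithfulness of the right adjoint $G$ on $D^-(U)$; and (iii) conservativity of $F$ on $K(\Inj A^!)_{\nil}$. Since every bounded-above curved dg-module is automatically locally nilpotent (any $m\in M$ lies in a bounded submodule), we have $K^-(\Inj A^!) \subset K(\Inj A^!)_{\nil}$, so $F$ remains conservative when restricted to $K^-(\Inj A^!)$.

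With these three facts in hand, the proof is a one-liner: the functor $F \colon K^-(\Inj A^!) \to D^-(U)$ is a triangulated, conservative functor with fully faithful right adjoint $G$, hence an equivalence by \cite[Proposition~3.2.9]{KrauseBook}, and $G$ is then the inverse equivalence.

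There is no genuine obstacle; the potential subtlety is only verifying that the categories match up correctly with the statements of Lemma~\ref{lem:FconservativeNoethbound}. Specifically, one should note that conservativity is needed on the source of $F$, and that source $K^-(\Inj A^!)$ is contained in the category $K(\Inj A^!)_{\nil}$ on which conservativity was proved (via Proposition~\ref{prop:injjacyclic} combined with the quasi-isomorphism $S(I) \simeq F(I)$ of Theorem~\ref{thm:quasiisoJF}). Once this containment is observed, all conditions of \cite[Proposition~3.2.9]{KrauseBook} are satisfied and the equivalence $D^-(U) \simeq K^-(\Inj A^!)$ follows immediately.
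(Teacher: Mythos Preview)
Your proof is correct and follows exactly the paper's approach: the paper states just before the corollary that Lemma~\ref{lem:FconservativeNoethbound} gives a conservative functor $F$ with fully faithful right adjoint $G$, and then invokes \cite[Proposition~3.2.9]{KrauseBook}. Your additional remark that $K^-(\Inj A^!) \subset K(\Inj A^!)_{\nil}$ (so that conservativity transfers) makes explicit an observation the paper records in the proof of Lemma~\ref{lem:imageG2Noeth}.
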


\subsection{Finite global dimension}

In this section, we show that if $A$ has finite global dimension then $\mc{N} = 0$ and hence the main equivalence of Theorem~\ref{thm:mainequivalence}  is an equivalence $D(U) \simeq K(\Inj A^!)$.

\begin{lem}\label{lem:finglbdim}
    The algebra $A$ has (left or right) global dimension $n < \infty$ if and only if $(A^!)^n \neq 0$ but $(A^!)^i = 0$ for all $i > n$. 
\end{lem}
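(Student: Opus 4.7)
The plan is to identify the graded components of $A^!$ with $\Ext$-groups for $A$, then reduce the (left or right) global dimension of $A$ to the projective dimension of the trivial module $\rin$, which is controlled directly by the Koszul resolution.

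First, Proposition~\ref{prop:koszuldualext} gives $(A^!)^i \cong \Ext^i_A(\rin_A, \rin_A)$ and $({}^!A)^i \cong \Ext^i_A({}_A \rin, {}_A \rin)$ as graded $\rin$-bimodules, and by Lemma~\ref{lem:!A!} both are isomorphic to the right or left $\rin$-dual of the $\rin$-bimodule $Q^{(-i)}$. Since $\rin$ is finite-dimensional and semisimple, these duals are non-degenerate, so
\[
(A^!)^i = 0 \iff Q^{(-i)} = 0 \iff ({}^!A)^i = 0,
\]
which shows the vanishing hypothesis in the statement is left-right symmetric. Combined with Theorem~\ref{thm:koszulprojresolution} and Remark~\ref{rem:rightkoszulcomplex}, the Koszul complex $\widetilde{\mc K}(A)$, with $\widetilde{\mc K}(A)^{-i} = Q^{(-i)} \otimes_\rin A$, is a minimal graded projective resolution of $\rin_A$ whose $i$-th term vanishes iff $(A^!)^i = 0$. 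Hence $\mathrm{pd}_A(\rin_A) = \mathrm{pd}_A({}_A \rin)$ equals the largest integer $n$ with $(A^!)^n \ne 0$, interpreted as $+\infty$ if no such bound exists.

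Next, to upgrade from $\mathrm{pd}_A(\rin)$ to $\gldim A$, I would run the standard $\Tor$ argument. For any graded left $A$-module $M$, a minimal graded projective resolution has the form $\cdots \to A \otimes_\rin V_1 \to A \otimes_\rin V_0 \to M$; applying $\rin_A \otimes_A -$ and using the minimality condition $d(P_i) \subseteq A_{>0} \cdot P_{i-1}$ identifies $V_i \cong \Tor^A_i(\rin_A, M)$. By $\Tor$ balancing, the latter can also be computed using a projective resolution of $\rin_A$, so it vanishes for $i > \mathrm{pd}_A(\rin_A) = n$. Therefore $\mathrm{pd}_A(M) \le n$ for every $M$, which gives left $\gldim(A) \le n$; the reverse inequality $\mathrm{pd}_A({}_A \rin) \le \gldim(A)$ is trivial. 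A symmetric argument yields the same conclusion on the right, completing the proof.

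The one technical point worth care is the identification $V_i \cong \Tor^A_i(\rin_A, M)$ for $M$ not necessarily finitely generated; this uses the existence of minimal graded projective resolutions over the connected graded algebra $A$, which in turn relies on the semisimplicity of $\rin$.
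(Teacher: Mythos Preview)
Your argument is correct and is precisely the classical route the paper deliberately sidesteps. The paper remarks that the standard result ``$\gldim(A)=\mathrm{pd}_A(\rin)$ for connected graded $A$'' is usually stated under a Noetherian hypothesis, and instead invokes the quasi-isomorphism $FG(M)\to M$ of Lemma~\ref{lem:FGquasiM} with $U=A$: when $(A^!)^i=0$ for $i>n$ this is already an explicit free resolution of length $n$ for every $A$-module $M$, so $\gldim(A)\le n$ drops out with no appeal to minimal resolutions at all. Your Tor-balancing argument reaches the same bound via minimal graded resolutions, and you correctly isolate the only genuine input---semisimplicity of $\rin$, which guarantees minimal resolutions for bounded-below graded modules---so your approach in fact shows that the paper's caution about the classical argument is not strictly necessary. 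What the paper's route buys is that it sits inside the functorial framework already built and treats ungraded modules transparently; your argument as written bounds only the \emph{graded} global dimension, so to match the statement literally you would still owe one further step (either the equality of graded and ungraded global dimension for $\mathbb{Z}$-graded rings, or a remark that the graded version is what is actually used downstream).
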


\begin{proof}
If $A$ is graded left Noetherian, then it is well-known that it has finite global dimension if and only if $k$ has finite projective dimension as a graded $A$-module since $A$ is a connected graded $k$-algebra; see \cite{LiGlobalDimGraded}. However, in our setting $A$ need not be Noetherian. Therefore, we give a direct proof using the functors $(F,G)$. 

Notice that if $M$ is a graded left $A$-module and $A^!$ is bounded above at degree $n$, then the quasi-isomorphism $FG(M) \to M$ of Lemma~\ref{lem:FGquasiM} is a projective resolution of $M$ of length $n$ and hence $A$ has (left) global dimension at most $n$. On the other hand, Proposition \ref{prop:koszuldualext} implies that $\Ext^p_{A}(k,k) = ({}^! A)^p$,
so the left global dimension of $A$ is exactly $n$.

The case of right global dimension is dealt with by considering ${}^! A$ instead and noting that $(A^!)^p \neq 0$ if and only if $({}^! A)^p \neq 0$. 
\end{proof}

\begin{lem}\label{lem:imageG2}
If $A$ has finite global dimension then $G$ maps acyclic complexes in $K(U)$ to zero. 
\end{lem}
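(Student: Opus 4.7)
The plan is to combine the hypothesis that $A$ has finite global dimension with the results of the previous subsection on locally nilpotent modules. The key observation is that, by Lemma~\ref{lem:finglbdim}, finite global dimension of $A$ forces $A^!$ to be bounded: there exists $n$ with $(A^!)^i = 0$ for $i > n$. In particular, $(A^!_+)^{n+1} = 0$, so every graded $A^!$-module is automatically locally nilpotent. Consequently, for any $M \in K(U)$, the cdg-module $G(M) = \HOM_{\rin}(A^!,M)$ lies in $C(A^!)_{\mathrm{nil}}$, and hence Lemma~\ref{lem:imageG} gives $G(M) \in K(\Inj A^!)_{\mathrm{nil}}$.

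Next, I would recall from the proof of Lemma~\ref{lem:imageG} the natural isomorphism
\[
H^{i}(S(G(M))) \cong H^{i}(M),
\]
which follows from the adjunction $\Hom_{K(A^!)}(\rin,G(M)[i]) \simeq \Hom_{K(U)}(F(\rin),M[i]) = \Hom_{K(U)}(U,M[i])$. Thus, if $M$ is acyclic in $K(U)$, the complex $S(G(M))$ is acyclic over $\rin$.

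Finally, I would apply Proposition~\ref{prop:injjacyclic} to the locally nilpotent object $G(M) \in K(\Inj A^!)_{\mathrm{nil}}$: since $S(G(M))$ is acyclic, the proposition yields $G(M) \simeq 0$ in $K(\Inj A^!)_{\mathrm{nil}}$, and in particular $G(M) = 0$ in $K(\Inj A^!)$. The argument is essentially a two-line reduction to prior results, so there is no real obstacle; the only subtlety worth flagging is confirming that boundedness of $A^!$ really does force local nilpotency of every $A^!$-module, so that the hypothesis of Proposition~\ref{prop:injjacyclic} is met without additional work.
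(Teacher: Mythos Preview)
Your proposal is correct and follows essentially the same approach as the paper: both invoke Lemma~\ref{lem:finglbdim} to see that $A^!$ is bounded, deduce that every $A^!$-module is locally nilpotent, and then combine Lemma~\ref{lem:imageG} with Proposition~\ref{prop:injjacyclic}. The paper phrases the conclusion slightly more compactly as $\mc{N}=\{0\}$ (so Lemma~\ref{lem:imageG} finishes immediately), whereas you unfold this into the explicit check that $S(G(M))$ is acyclic and then apply Proposition~\ref{prop:injjacyclic} directly, but the logic is identical.
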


\begin{proof}
If $A$ has finite global dimension then $A^!$ is bounded by Lemma~\ref{lem:finglbdim} and thus finite-dimensional. In particular, every curved $A^!$ module is locally nilpotent and Proposition~\ref{prop:injjacyclic} says that $\mc{N} = \{ 0 \}$. Therefore, the lemma follows directly from Lemma~\ref{lem:imageG}. 
\end{proof}

Repeating the proof of Lemma~\ref{lem:FconservativeNoethbound}, but using Lemma~\ref{lem:imageG2} instead of \ref{lem:imageG2Noeth}, implies that $(F,G)$ descend to an adjoint pair 
\[
\begin{tikzcd}
F:  K(\Inj A^!) \arrow[r,shift left] & \arrow[l,shift left] D(U) \colon G.
\end{tikzcd}
\]
Moreover, $F$ is a conservative functor with a fully faithful right adjoint $G$, hence:

\begin{cor}\label{cor:main2b}
If $A$ has finite global dimension then $F, G$ extend to equivalences $D(U) \simeq K(\Inj A^!)$.
\end{cor}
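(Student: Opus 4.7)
The plan is to deduce the corollary directly from the main equivalence (Theorem~\ref{thm:main}) by showing that the localising subcategory $\mathcal{N}$ becomes trivial under the finite global dimension hypothesis. The whole argument is essentially a matter of assembling the lemmas already in place.

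First, I would invoke Lemma~\ref{lem:finglbdim}: finite global dimension of $A$ is equivalent to the boundedness of the graded algebra $A^!$, i.e.\ there exists $n$ such that $(A^!)^i = 0$ for all $i > n$. Since each graded piece $(A^!)^i$ is finitely generated as a $\rin$-bimodule (and $\rin$ is finite-dimensional over $\Bbbk$), boundedness forces $A^!$ to be finite-dimensional over $\Bbbk$. Consequently, any graded $A^!$-module $M$ satisfies $(A^!)_+^{n+1} \cdot M = 0$; in particular, every curved dg-module over $A^!$ is locally nilpotent, so $K(\Inj A^!)_{\nil} = K(\Inj A^!)$.

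Next I would apply Proposition~\ref{prop:injjacyclic}, which states $\mathcal{N} \cap K(\Inj A^!)_{\nil} = 0$. Combined with the previous step, this gives $\mathcal{N} = 0$. Therefore the Verdier quotient in Theorem~\ref{thm:main} is trivial, and that theorem specialises to an equivalence $D(U) \simeq K(\Inj A^!)$ implemented by $F$ and $G$.

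For completeness, one may instead argue directly as suggested in the text: Lemma~\ref{lem:imageG2} shows that $G \colon K(U) \to K(\Inj A^!)$ annihilates acyclic complexes and hence descends to a functor $G \colon D(U) \to K(\Inj A^!)$, which is right adjoint to $F$. The functor $G$ is fully faithful by Theorem~\ref{thm:FGquasiK}, while $F$ is conservative because $\Ker F = \mathcal{N} = 0$ thanks to Theorem~\ref{thm:quasiisoJF} combined with the vanishing of $\mathcal{N}$ established above. The conclusion then follows from \cite[Proposition~3.2.9]{KrauseBook}. There is no real obstacle here; the only delicate point is ensuring that finite global dimension translates cleanly to the finite-dimensionality of $A^!$, which is exactly the content of Lemma~\ref{lem:finglbdim} together with the standing finite-generation assumption on the generators $E$.
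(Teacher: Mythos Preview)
Your proposal is correct and follows essentially the same approach as the paper: both reduce to showing $\mathcal{N}=0$ via Lemma~\ref{lem:finglbdim} (boundedness of $A^!$) and Proposition~\ref{prop:injjacyclic}, and then conclude either by specialising Theorem~\ref{thm:main} or by the conservative-plus-fully-faithful argument, exactly as the paper does in the paragraph preceding the corollary. Your first route (simply invoking Theorem~\ref{thm:main} with $\mathcal{N}=0$) is in fact slightly more streamlined than the paper's choice to rerun the argument of Lemma~\ref{lem:FconservativeNoethbound}, but the content is identical.
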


\subsection{Relation to work of Positselski}\label{sec:Positselski}

We review our results in the context of Positselski's work. One of the main results, as stated in \cite{positselskiTwoKindsDerived2011, positselskiDifferentialGradedKoszul2023}, gives a duality between the (ordinary) derived category of a dg-algebra and the coderived category of a cdg-coalgebra. There is no assumption of Koszulness, which is only needed to provide the ``small'' description of the dual. The general theory in these papers is developed for algebras over a field $\Bbbk$. In \cite{positselskiRelativeNonhomogeneousKoszul2021}, many of these results, including the special case of non-homogeneous Koszul duality, are extended to algebras over a much more general class of rings, which trivially includes semisimple rings. Let us unpack some of the definitions.

The notion of cdg-coalgebra is dual to that of cdg-algebra. That is, in the definition all arrows are inverted. Similarly, one can define cdg-comodules dual to cdg-modules. Recall that for a coalgebra $C$, the vector space dual $C^*$ is naturally an algebra, but the converse is true only for finite-dimensional algebras. Similarly, the graded dual of a cdg-coalgebra is a cdg-algebra, but the converse is true only if all graded components are finite-dimensional. 

Given a cdg-coalgebra $C$ and a dg-algebra $A$, the space $\HOM_\Bbbk(C,A)$ has a canonical structure of a cdg-algebra. A \emph{twisting cochain} $\tau$ for $C$ and $A$ is a Maurer-Cartan element for the cdg-algebra $\HOM_\Bbbk(C,A)$; that is a degree $1$ linear map $\tau\colon C\to A$ satisfying the Maurer-Cartan equation (see \cite[Section 7.8]{positselskiDifferentialGradedKoszul2023}). The coderived category of cdg-comodules of $C$ is the Verdier quotient of the homotopy category by the subcategory of \emph{coacyclic cdg-comodules} (see \cite[Definition 7.11]{positselskiDifferentialGradedKoszul2023}). There is an equivalence 
$$ K(\Inj C) \simeq D^{co}(C), $$
where $K(\Inj C)$ is the homotopy category of cdg-comodules whose underlying graded comodule is injective \cite[Theorem 4.4(c)]{positselskiTwoKindsDerived2011}.

\begin{thm}[{\cite[Theorem 6.12]{positselskiDifferentialGradedKoszul2023}
}]
    Let $A$ be a nonzero dg-algebra and $C$ a conilpotent cdg-coalgebra, with coaugmentation $\gamma$. Let $\tau\colon C\to A$ be an acyclic twisting cochain such that $\tau\circ\gamma=0$. Then there is an equivalence of triangulated categories:
    $$D(A)\simeq D^{co}(C).$$
\end{thm}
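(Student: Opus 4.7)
The plan is to construct the equivalence via the twisted tensor product functors associated to $\tau$, which is the standard mechanism underlying every flavor of Koszul duality. First, I would define an adjoint pair $L_\tau\dashv R_\tau$ at the level of (c)dg-(co)modules: given a cdg-comodule $(N,d_N,\Delta_N)$ over $C$, set $L_\tau(N) = A\otimes_\Bbbk N$ with differential $d_A\otimes 1 + 1\otimes d_N + (m_A\otimes 1)(1\otimes\tau\otimes 1)(1\otimes\Delta_N)$ (with appropriate Koszul signs), and dually define $R_\tau(M) = C\otimes_\Bbbk M$ with an analogous twisted codifferential. The Maurer--Cartan equation for $\tau$, together with the (co)curvature and differential relations on $C$ and $A$, is exactly what ensures $d_{L_\tau}^2 = 0$ on the dg side (no curvature) and $d_{R_\tau}^2 = c\cdot{-}$ on the cdg-comodule side. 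The adjunction is standard tensor-hom, and both functors preserve homotopy equivalences, so they descend to the homotopy categories.

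Second, the adjunction must be shown to descend past the localizations defining $D(A)$ and $D^{\mr{co}}(C)$. For this I would verify: $(i)$ $R_\tau$ sends acyclic dg-modules to coacyclic cdg-comodules, by exhibiting the needed totalization of short exact sequences; and $(ii)$ $L_\tau$ sends coacyclic cdg-comodules to acyclic dg-modules, using conilpotence of $C$ to filter by the coradical filtration and reduce to the associated graded, where $\tau$ acts trivially because $\tau\circ\gamma=0$. Acyclicity of $\tau$ then yields one half of the equivalence: the counit $L_\tau R_\tau(A)\to A$ is the augmentation of the two-sided twisted bar complex $A\otimes^\tau C\otimes^\tau A\to A$, which by definition of acyclicity is a quasi-isomorphism. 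Since both functors commute with arbitrary coproducts and $D(A)$ is compactly generated by $A$ and its shifts, the counit is an isomorphism on all of $D(A)$, i.e.\ $R_\tau$ is fully faithful.

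The main obstacle is showing that $L_\tau$ is conservative on $D^{\mr{co}}(C)$; combined with full faithfulness of its right adjoint, this gives the equivalence via \cite[Proposition~3.2.9]{KrauseBook}, exactly as in the proof of Theorem~\ref{thm:main}. One cannot simply invoke a Quillen adjunction argument here, since quasi-isomorphisms are not weak equivalences in the coderived category; the conilpotence of $C$ has to be used essentially. The strategy, inspired by Proposition~\ref{prop:injjacyclic}, is as follows. Suppose $N\in D^{\mr{co}}(C)$ satisfies $L_\tau(N)\simeq 0$, and filter $N=\bigcup_{n\geq 0} N_{(n)}$ by the coradical filtration. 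On each subquotient $N_{(n)}/N_{(n-1)}$ the coaction factors through the coaugmentation, so $L_\tau$ reduces to the untwisted tensor product with $A$, which is faithful. A Mittag-Leffler--type limit argument, in the spirit of the one appearing in the proof of Proposition~\ref{prop:injjacyclic}, then propagates the conclusion $N_{(n)}/N_{(n-1)}\simeq 0$ from the graded pieces to $N$ itself in $D^{\mr{co}}(C)$. Together with the previous step, this shows that $L_\tau$ is an equivalence.
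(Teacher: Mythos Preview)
The paper does not give its own proof of this theorem; it is quoted verbatim from \cite{positselskiDifferentialGradedKoszul2023} in Section~\ref{sec:Positselski} as background, to situate the paper's Koszul-specific results within Positselski's general framework. There is thus no in-paper argument to compare yours against.

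Evaluating your sketch on its own: the construction of $L_\tau\dashv R_\tau$, the descent to homotopy and then to the localized categories, and the full faithfulness of $R_\tau$ via the counit and compact generation of $D(A)$ by $A$ are all correct and standard. The genuine difficulty is, as you say, the last step.

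Your conservativity argument has a gap. You write that you will ``propagate the conclusion $N_{(n)}/N_{(n-1)}\simeq 0$ from the graded pieces to $N$'', but you never establish that the graded pieces vanish. Knowing that $L_\tau(N)$ is acyclic gives you no control over $L_\tau$ applied to the subquotients $N_{(n)}/N_{(n-1)}$: the coradical filtration on $N$ induces a filtration on $L_\tau(N)$ with associated graded $A\otimes_\Bbbk\gr N$, but acyclicity of a filtered complex does not force acyclicity of its associated graded, so the faithfulness of $A\otimes_\Bbbk-$ on trivial comodules is never actually invoked. The analogy with Proposition~\ref{prop:injjacyclic} is also misleading: that argument builds up the class of $M$ with $\HOM_\Lambda(M,I)$ acyclic, using injectivity of $I$ to pass along short exact sequences and colimits; it does not filter $I$ itself and deduce vanishing of its pieces. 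The standard fix (and Positselski's route) is to show instead that for \emph{every} $N$ the unit $N\to R_\tau L_\tau(N)$ has coacyclic cone. This is where the coradical filtration is actually used: it gives an exhaustive admissible filtration on the cone whose graded pieces are shifted bar-type complexes, manifestly coacyclic, and coacyclics are closed under such colimits. Conservativity then follows formally: if $L_\tau(N)\simeq 0$ in $D(A)$, then $R_\tau L_\tau(N)$ is coacyclic by your step (i), and the unit having coacyclic cone forces $N$ coacyclic.
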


Through the bar construction, one can obtain a coalgebra $\mr{Bar^\bullet}(A)$ with a twisting cochain starting from any algebra $A$ \cite[Example 6.10]{positselskiDifferentialGradedKoszul2023}. This coalgebra is in general rather big; to get smaller examples Koszulness is needed. Let $U$ be a non-homogeneous Koszul algebra and $A=\gr \, U$. Consider the cdg-coalgebra $A^?$, the graded dual of $A^!$. The coalgebra $A^?$ is a cdg-subcoalgebra of $\mr{Bar^\bullet}(A)$ and there is a canonical twisting cochain $\tau\colon A^? \to U$ \cite[Example 6.11]{positselskiDifferentialGradedKoszul2023} that gives an equivalence 
\begin{equation}\label{eq:coalgebraequivalence}
    D(U)\simeq D^{co}(A^?)\simeq K(\Inj A^?). 
\end{equation}

We wish to translate the equivalence \eqref{eq:coalgebraequivalence} to an equivalence between $D(U)$ and some category of \emph{modules} for $A^!$. The issue is that not all $A^!$-modules can be obtained by dualizing $A^?$-comodules; only the locally nilpotent modules are obtained this way. The coderived category $D^{co}(A^!)$ is then defined as a quotient of $K(A^!)_\nil$. It is equivalent to the homotopy category $K(\Injn A^!)$, which is the subcategory of $K(A^!)_\nil$ consisting of all objects isomorphic to a nilpotent cdg-module whose underlying graded module is injective \emph{in the category of locally nilpotent graded modules}.

For the rest of the section, let $U$ be a non-homogeneous Koszul algebra over a semisimple ring $\rin$ and $A=\gr \, U$.

\begin{thm}\label{thm:equivalenceinjnilp}
    There is an equivalence of triangulated categories 
     \begin{equation}
    D(U)\simeq K(\Injn A^!). 
\end{equation}
\end{thm}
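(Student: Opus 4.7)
The plan is to deduce Theorem~\ref{thm:equivalenceinjnilp} by chaining together two results of Positselski already recalled in the surrounding exposition.

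First, I would invoke \cite[Corollary~6.18]{positselskiRelativeNonhomogeneousKoszul2021}---stated as equation~\eqref{eq:Posit2} in the introduction---which supplies an equivalence
\[
D(\Lmod{U}) \iso D^{co}(\Lmod{A^!})_\nil
\]
for any non-homogeneous Koszul algebra $U$ over a semisimple coefficient ring. Our standing assumptions (namely that $\rin$ is semisimple and $U$ is non-homogeneous Koszul) place us squarely in the setting where this result applies.

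Next, I would apply \cite[Theorem~8.17]{positselskiRelativeNonhomogeneousKoszul2021} to identify the coderived category of locally nilpotent cdg-modules with the homotopy category of injectives inside that subcategory:
\[
D^{co}(\Lmod{A^!})_\nil \iso K(\Injn A^!).
\]
Here injectivity is taken in the abelian category of locally nilpotent graded $A^!$-modules, which---as noted above---is the correct notion matching injectives on the coalgebra side $A^?$ of Positselski's duality. Composing these two equivalences immediately yields $D(U) \simeq K(\Injn A^!)$.

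Because the argument is essentially a citation, I do not anticipate any substantial obstacle, but I would take care to explain the relationship to our own Theorem~\ref{thm:main}. Combining the two statements produces a canonical triangulated equivalence
\[
K(\Inj A^!)/\mathcal{N} \iso K(\Injn A^!),
\]
where the quotient by $\mathcal{N}$ on the left is precisely what is needed to match the locally nilpotent world on the right. This is reassuring: by Proposition~\ref{prop:injjacyclic} objects of $K(\Injn A^!)$ already satisfy the $S$-conservativity property that defines $\mathcal{N}$, so no further adjustment is needed on that side, while on the left the quotient by $\mathcal{N}$ precisely kills those injectives (such as the one in Example~\ref{ex:dualnumbers}) which fail to be locally nilpotent.
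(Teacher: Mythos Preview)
Your citation-based argument is correct, and in fact the paper itself acknowledges immediately after the statement that the result follows from exactly these two references of Positselski. However, the paper then provides a \emph{direct} proof using its own machinery, in keeping with the stated goal of giving elementary self-contained arguments. Concretely, the paper introduces a modified functor $\mathbf{G}(M)=\bigoplus_{i\ge 0}\Hom_\rin((A^!)^i,M^{n+i})$ (the direct-sum rather than direct-product totalisation), identifies it with $\Gamma(G(M))$ via a short lemma, checks that $(\mathbf{F},\mathbf{G})$ is still an adjoint pair on the locally nilpotent side, and then reruns the arguments of Corollary~\ref{cor:homokoszulcomplex}, Theorem~\ref{thm:quasiisoJF}, and Theorem~\ref{thm:FGquasiK} in this setting to conclude that $\mathbf{F}$ is conservative and $\mathbf{G}$ fully faithful. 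The payoff of the paper's approach is that the equivalence is realised explicitly by the functors $\mathbf{F},\mathbf{G}$, which is then used in Corollary~\ref{cor:equivalenceNoetherian} to compare $\mathbf{G}$ with $G$ inside $K(\Inj A^!)/\mathcal{N}$; your black-box citation, while valid, would not by itself yield this explicit description.
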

This follows from \cite[Corollary 6.18, Theorem 8.17]{positselskiRelativeNonhomogeneousKoszul2021}. We provide a direct proof, based on our previous results, for the reader's convenience. We need some preliminary results. Let $\mb F$ be the restriction of $F$ to $C(A^!)_\nil$. Define a new functor $\mb G\colon C(U)\to C(A^!)$ by
\begin{equation}
        \mb{G}(M)=\bigoplus_{n\in\Z}G(M)^n, \quad \mb G(M)^n:=\bigoplus_{i\geq0}\Hom_U(T^i, M^{n+i})=\bigoplus_{i\geq0}\Hom_\rin((A^!)^i, M^{n+i}).
\end{equation}
That is, $\mb G$ is the direct sum totalization of the bi-complex of graded homomorphisms (whilst the original functor $G$ was the direct product totalization). 

Let $\Gamma\colon C(A^!)\to C(A^!)_\nil$ be the functor that sends a module to its largest locally nilpotent submodule. 

\begin{lem}\label{lem:tildeG}
    Let $N\in C(A^!)$ and $M\in C(U)$. Then $S(N)=S(\Gamma(N))$ and $\Gamma(G(M))=\mb G(M)$.
\end{lem}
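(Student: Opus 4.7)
My plan is to verify the two identities separately. The identity $S(N) = S(\Gamma(N))$ is essentially formal: by definition every element $n \in S(N)$ is annihilated by the augmentation ideal $(A^!)_+$, so $S(N)$ itself is killed by $(A^!)_+$ (in one step) and is in particular a locally nilpotent cdg-submodule; hence $S(N) \subseteq \Gamma(N)$. Since the defining condition of $S$ depends only on the ambient $A^!$-action, for any submodule $M \subseteq N$ we have $S(M) = M \cap S(N)$, and applying this to $M = \Gamma(N)$ with $S(N) \subseteq \Gamma(N)$ yields $S(\Gamma(N)) = \Gamma(N) \cap S(N) = S(N)$.

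For $\Gamma(G(M)) = \mb G(M)$, I would first fix the graded identification $G(M) \cong \HOM_\rin(A^!,M) = \prod_{i \geq 0}\Hom_\rin((A^!)^i, M[i])$, under which $\mb G(M)$ corresponds to the direct sum $\bigoplus_{i \geq 0}\Hom_\rin((A^!)^i, M[i])$, i.e.\ to tuples $\phi = (\phi_i)_{i \geq 0}$ with $\phi_i = 0$ for all but finitely many $i$. Since the left $A^!$-action on $G(M)$ is induced from the right $A^!$-action on $T = U \ok A^!$ on the second factor, for homogeneous $a \in (A^!)^p$ and $b \in (A^!)^j$ one obtains, up to the standard Koszul sign, the explicit formula $(a \cdot \phi)_j(b) = \phi_{j+p}(ba)$. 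The claim thus reduces to showing that $\phi \in G(M)$ is locally nilpotent if and only if $\phi_i = 0$ for $i \gg 0$.

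The crucial input for this equivalence is that $A^!$, being a quadratic $\rin$-algebra, is generated in degree one, so the powers of the augmentation ideal satisfy $(A^!)_+^\ell = \bigoplus_{i \geq \ell}(A^!)^i$ and moreover $(A^!)^i = (A^!)^{i-\ell} \cdot (A^!)^\ell$ for every $i \geq \ell$. The ``if'' direction is then immediate from the formula above: if $\phi_i = 0$ for $i \geq \ell$, then $(a \cdot \phi)_j = 0$ whenever $|a| \geq \ell$, so $(A^!)_+^\ell \cdot \phi = 0$. Conversely, suppose $(A^!)_+^\ell \cdot \phi = 0$. For any $i \geq \ell$ and any $c \in (A^!)^i$, decompose $c = \sum_k b_k a_k$ with $|a_k| = \ell$; then, up to sign, $\phi_i(c) = \sum_k (a_k \cdot \phi)_{|b_k|}(b_k) = 0$, and hence $\phi_i = 0$, so $\phi \in \mb G(M)$. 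The main (minor) obstacle will be carefully recording the explicit formula for the left $A^!$-action on $G(M)$ in components and checking that $\Gamma$ really lands in $C(A^!)_\nil$ (i.e.\ that the maximal locally nilpotent graded submodule is preserved by the curved differential, which follows from the derivation property of $d_{A^!}$ and the fact that $c \in (A^!)_+$); once these are in place, the remainder is a purely combinatorial consequence of $A^!$ being generated in degree one.
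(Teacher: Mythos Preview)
Your proposal is correct and follows essentially the same approach as the paper. For $S(N)=S(\Gamma(N))$ the paper phrases the argument slightly more functorially (any $A^!$-map from a locally nilpotent module into $N$ factors through $\Gamma(N)$, then take $N'=\rin$), whereas you argue directly that $S(N)\subseteq\Gamma(N)$ and intersect; for $\Gamma(G(M))=\mb G(M)$ the paper only spells out the inclusion $\Gamma(G(M))\subseteq\mb G(M)$ via the observation that $f_i\neq 0$ forces $(A^!)^{\le i}\cdot f\neq 0$, while you additionally make the (easy) reverse inclusion explicit using $(A^!)_+^\ell=(A^!)^{\ge\ell}$.
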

\begin{proof}
    Let $N'\in C(A^!)_\nil$, $N\in C(A^!)$. For every $f\in\HOM_{A^!}(N',N)$, the image of $f$ is a locally nilpotent submodule of $N$ and hence contained in $\Gamma (N)$. Therefore, $\HOM_{A^!}(N',\Gamma (N))=\HOM_{A^!}(N',N)$. In particular, $S(N)=\HOM_{A^!}(\rin,N)=S(\Gamma(N))$. 

    Let $f\in G(M)$ be a nilpotent element. We can assume $f$ is homogeneous, say $f\in G(M)^n$. Then $f=(f_i)_{i\geq0}$, with $f_i\in \Hom_\rin((A^!)^i,M^{n+i})$. Clearly, for all $i\geq0$, if $f_i\neq 0$ then $(A^!)^{\leq i} . f_i \neq 0$. So, for all but finitely many $i$, $f_i=0$. Hence $f\in \mb G(M)$, which implies that $\Gamma (G(M))=\mb G(M)$.   
\end{proof}

\begin{proof}[Proof (Theorem \ref{thm:equivalenceinjnilp})]
    First, notice that $\mb F, \mb G$ are still an adjoint pair
    \begin{equation*}
        \begin{tikzcd}
\mb F:  K(\Injn A^!) \arrow[r,shift left] & \arrow[l,shift left] K(U) \, \colon \mb G.
\end{tikzcd}
    \end{equation*}
    In fact, by Lemma \ref{lem:tildeG}, for all $N,N'\in C(A^!)_\nil$:
    \begin{equation*}
        \HOM_{\grLmod{A^!}}(N',\mb G(M))=\HOM_{\grLmod{A^!}}(N',G(M)),
    \end{equation*}
    which is exact because $G(M)$ is injective. Moreover, for all $M\in C(U)$: 
    \begin{equation*}
        \HOM_{U}(F(N),M)=\HOM_{A^!}(N,G(M))=\HOM_{A^!}(N,\mb G(M)).
    \end{equation*}   
    Notice that the proof of Corollary \ref{cor:homokoszulcomplex} uses the injectivity of $I$ only to show that $\EXT_{A^!}^i(\rin, I)=0$ for $i\neq0$. Since $\rin$ is a nilpotent module, this vanishing holds for $I$ injective in $C(A^!)_\nil$. In particular, Corollary \ref{cor:homokoszulcomplex} holds for every $I\in C(\Injn A^!)$. The same is true for Theorem \ref{thm:quasiisoJF}, because the spectral sequence argument does not use that $I$ is injective. 
    
    Let $M\in C(U)$. We have quasi-isomorphisms
    $$ F(G(M))\simeq S(G(M)) = S(\mb G(M))\simeq \mb F(\mb G(M)).   $$
    Therefore, Theorem \ref{thm:FGquasiK} implies that $\mb F\mb G(M)\simeq M$. So, if $M$ is acyclic, so is $\mb F \mb G(M)$ and so is $S(\mb G(M))$, which means that $\mb G(M)\in \mc N$. The proof of Proposition \ref{prop:injjacyclic} only uses the fact that $I$ is injective in $K(A^!)_\nil$, so $K(\Injn A^!)\cap \mc N=0$. Hence,  $\mb G(M)$ must be zero. This proves that $\mb G$ factors through a functor 
    $\mb G\colon D(U)\to K(\Injn A^!)$. 
    The proof then follows by the same argument of Theorem \ref{thm:main}: we have an adjoint pair
    \begin{equation*}
        \begin{tikzcd}
\mb F:  K(\Injn A^!) \arrow[r,shift left] & \arrow[l,shift left] D(U) \, \colon \mb G.
\end{tikzcd}
    \end{equation*}
    where $\mb F$ is conservative and $\mb G$ is fully-faithful.     
\end{proof}

\begin{lem}\label{lem:gradednoetherian}
    Let $\Lambda$ be a connected graded left Noetherian $\rin$-algebra. A locally nilpotent graded $\Lambda$-module is an injective graded module if and only if it is an injective object in the category of locally nilpotent graded modules.
\end{lem}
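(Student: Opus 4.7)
The ``if'' direction is elementary: any monomorphism $M\hookrightarrow N$ in $\grLmod{\Lambda}_\nil$ is also a monomorphism in $\grLmod{\Lambda}$, so any morphism $M\to I$ extends to $N\to I$ by injectivity of $I$ in the larger category, and the extension automatically lands in $\grLmod{\Lambda}_\nil$ since its target does.

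For the converse, suppose $I\in\grLmod{\Lambda}_\nil$ is injective in $\grLmod{\Lambda}_\nil$, and let $E$ denote the injective envelope of $I$ in $\grLmod{\Lambda}$. I plan to prove $E=I$, from which injectivity of $I$ in $\grLmod{\Lambda}$ is immediate. The inclusion $\grLmod{\Lambda}_\nil\hookrightarrow\grLmod{\Lambda}$ is exact and has $\Gamma$ as right adjoint, so $\Gamma$ preserves injectives; hence $\Gamma(E)$ is injective in $\grLmod{\Lambda}_\nil$. Since $I\subseteq E$ is essential and $I\subseteq \Gamma(E)\subseteq E$, the intermediate inclusion $I\subseteq \Gamma(E)$ is also essential; it splits by the injectivity of $I$ in $\grLmod{\Lambda}_\nil$, and a split essential inclusion is an isomorphism, giving $I=\Gamma(E)$.

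It remains to show that $\Gamma(E)=E$, i.e.\ that $E$ itself is locally nilpotent, and this is where the left Noetherian hypothesis enters in an essential way. Since $\Lambda$ is graded left Noetherian, $\grLmod{\Lambda}$ is a locally Noetherian Grothendieck category, and locally nilpotent modules form a hereditary torsion class (closure under subobjects, quotients and direct sums is automatic; closure under extensions uses Noetherianity to pass from nilpotence of each $\Lambda_+^a m$ inside the torsion kernel to nilpotence of $m$). By the classical theorem of Gabriel and Stenstr\"om, every hereditary torsion class in a locally Noetherian Grothendieck category is stable under injective envelopes, so $E$ lies in $\grLmod{\Lambda}_\nil$, whence $E=\Gamma(E)=I$ is injective in $\grLmod{\Lambda}$.

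The main difficulty is establishing the local nilpotence of $E$; as a more hands-on alternative, one can apply the graded Baer criterion directly. Given a map $\phi\colon J\to I$ from a finitely generated graded left ideal $J$, the image $\phi(J)$ is finitely generated and locally nilpotent, hence killed by some $\Lambda_+^N$, so $\Lambda_+^N J\subseteq \ker\phi$. A graded Artin--Rees argument, valid for the ideal $\Lambda_+$ in a connected graded Noetherian algebra, produces $M$ with $J\cap\Lambda_+^M\subseteq \Lambda_+^N J\subseteq \ker\phi$, so that $\phi$ factors through the inclusion $J/(J\cap\Lambda_+^M)\hookrightarrow\Lambda/\Lambda_+^M$ of locally nilpotent modules. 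Injectivity of $I$ in $\grLmod{\Lambda}_\nil$ then extends this to a morphism $\Lambda/\Lambda_+^M\to I$, which composed with the projection $\Lambda\twoheadrightarrow\Lambda/\Lambda_+^M$ yields the desired extension $\Lambda\to I$.
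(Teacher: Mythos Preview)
Your first approach has a genuine gap: the assertion that ``every hereditary torsion class in a locally Noetherian Grothendieck category is stable under injective envelopes'' is false for noncommutative Noetherian rings in general. Gabriel's stability theorem is a commutative result; there are left Noetherian rings with hereditary torsion theories that are not stable. The $\Lambda_+$-torsion theory over a connected graded left Noetherian algebra \emph{is} stable, but this is precisely the content of the lemma and requires its own argument.

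Your second approach, via the graded Baer criterion and a graded Artin--Rees statement, is correct. The Artin--Rees step $J\cap\Lambda_+^M\subseteq\Lambda_+^N J$ deserves a word of justification in the noncommutative setting, but here it follows from an elementary degree count: since $\Lambda$ is connected graded left Noetherian it is generated over $\rin$ by $\Lambda_+$, which is finitely generated, say by homogeneous elements of degree at most $D$; then $\Lambda_{\ge ND}\subseteq\Lambda_+^N$, and if $J$ is generated in degrees at most $d$ one has $J_{\ge ND+d}\subseteq\Lambda_{\ge ND}J\subseteq\Lambda_+^N J$, so $M=ND+d$ works.

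The paper takes a different and more direct route. Rather than proving stability of the torsion class or invoking Baer's criterion, it constructs an explicit locally nilpotent graded injective containing $I$: namely $J=\bigoplus_{p}\HOM_{\rin}(\Lambda,S(I)^p)$, a direct sum of coinduced modules which is injective because $\Lambda$ is graded Noetherian. Each summand is bounded above in the grading (since $\Lambda$ is connected graded), hence locally nilpotent, so $J$ is too. The inclusion $S(I)\hookrightarrow J$ extends to a map $I\to J$ by injectivity of $J$; this map is injective because any nonzero locally nilpotent submodule of $I$ meets $S(I)$. Then $0\to I\to J\to J/I\to 0$ lives entirely in the locally nilpotent category and splits by your hypothesis on $I$, exhibiting $I$ as a summand of the injective $J$. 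This avoids both the torsion-theoretic machinery and the Artin--Rees argument, using only that direct sums of injectives are injective over Noetherian rings and the basic observation $S(N)\neq 0$ for nonzero locally nilpotent $N$.
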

\begin{proof}
If $I$ is a locally nilpotent injective graded $\Lambda$-module, then it is clearly injective as an object in the category of locally nilpotent graded modules. We show the converse under the (graded left) Noetherian hypothesis.

Let $I$ be a locally nilpotent graded $\Lambda$-module that is injective as an object in the category of locally nilpotent graded modules. Notice that $S(I) \neq 0$ since $I$ is locally nilpotent. The graded $\Lambda$ module 
    \[
    J = \bigoplus_{p \in \Z} \HOM_{k}(\Lambda,S(I)^p)
    \]
    is a direct sum of injective graded left $\Lambda$ modules. Since we have assumed that $\Lambda$ is graded left Noetherian, this implies that $J$ is injective. Then the injectivity of $J$ implies that the embedding $S(I) \to J$ extends to a morphism $i \colon I \to J$. Let $N$ be the kernel of this morphism. Since $I$ is locally nilpotent, $N \neq 0$ implies that $S(N) \neq 0$. But $S(N) \subset S(I)$ and $i |_{S(I)}$ is injective. We deduce that $N = 0$. Since $\Lambda$ is connected graded and $S(I)^p$ has grading concentrated in a single degree, the injective module $\HOM_{k}(\Lambda,S(I)^p)$ has grading bounded above by $p$ and hence is locally nilpotent. Therefore, so too is $J$. This means that the short exact sequence 
    \[
    0 \to I \to J \to J/I \to 0
    \]
    has all terms lying in the category of locally nilpotent graded $\Lambda$-modules. Since $I$ is assumed to be injective in this subcategory, the inclusion $I \hookrightarrow J$ splits, i.e. $I$ is a graded direct summand of $J$. This implies that $I$ is injective as a graded $\Lambda$-module. 
\end{proof}

\begin{cor}\label{cor:equivalenceNoetherian}
   Assume that the underlying graded algebra $(A^!)^{\#}$ is graded left Noetherian. Every object $G(M)\in K(\Inj A^!)/\mc N$ is canonically isomorphic to $\mb G(M)$ in $K(\Inj A^!)_\nil$ and there is an equivalence 
    \[
\begin{tikzcd}
D(U) \ar[r, shift left=1.0ex, "\mb G"] & K(\Inj A^!)_\nil \ar[r,"\sim"] \ar[l, shift left=1.0ex, "\mb F"] & K(\mathrm{Inj}A^!)/\mathcal{N} .
\end{tikzcd}
\]
\end{cor}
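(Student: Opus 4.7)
The plan proceeds in three steps, leveraging the three results immediately preceding the corollary.

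First, combine Lemma \ref{lem:gradednoetherian} with Theorem \ref{thm:equivalenceinjnilp}. Under the Noetherian hypothesis on $(A^!)^\#$, Lemma \ref{lem:gradednoetherian} identifies the categories $\Injn A^!$ and $\Inj A^! \cap C(A^!)_{\nil}$, so passing to homotopy categories gives $K(\Injn A^!) = K(\Inj A^!)_{\nil}$. Theorem \ref{thm:equivalenceinjnilp} then immediately supplies the adjoint equivalence
\[
\mb F : K(\Inj A^!)_{\nil} \leftrightarrows D(U) : \mb G,
\]
and in particular $\mb G(M) \in K(\Inj A^!)_{\nil} \subset K(\Inj A^!)$ for every $M \in D(U)$.

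Second, I would show that the canonical inclusion $\iota_M : \mb G(M) \hookrightarrow G(M)$ (which makes sense because $\mb G(M) = \Gamma(G(M))$ by Lemma \ref{lem:tildeG}) becomes an isomorphism in $K(\Inj A^!)/\mc N$. Form the mapping cone $C_M$ of $\iota_M$ in $C(A^!)$; since both $\mb G(M)$ and $G(M)$ have injective underlying graded modules (using Lemma \ref{lem:gradednoetherian} for the former), so does $C_M$, hence $C_M \in K(\Inj A^!)$. Applying the triangulated functor $S = \HOM_{A^!}(\rin, -) : K(\Inj A^!) \to D(k)$ to the triangle $\mb G(M) \to G(M) \to C_M \to \mb G(M)[1]$ yields a triangle in $D(k)$. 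By Lemma \ref{lem:tildeG}, $S(\mb G(M)) = S(\Gamma(G(M))) = S(G(M))$ and the map $S(\iota_M)$ is the identity on these complexes (any element annihilated by $(A^!)_+$ is automatically locally nilpotent, hence lies in $\Gamma(G(M))$). Therefore $S(C_M)$ is acyclic, i.e.\ $C_M \in \mc N$, so $\iota_M$ becomes an isomorphism in the Verdier quotient.

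Third, assemble the functors. Let $\Phi : K(\Inj A^!)_{\nil} \to K(\Inj A^!)/\mc N$ denote the composition of the inclusion with the Verdier quotient. The morphisms $\iota_M$ assemble into a natural transformation $\Phi \circ \mb G \Rightarrow G$ of functors $D(U) \to K(\Inj A^!)/\mc N$; the previous step says this is a natural isomorphism. Since $G$ is an equivalence by Theorem \ref{thm:main} and $\mb G$ is an equivalence by step one, two-out-of-three forces $\Phi$ to be an equivalence as well. This simultaneously produces the canonical isomorphism $G(M) \cong \mb G(M)$ in $K(\Inj A^!)/\mc N$ and the chain of equivalences $D(U) \simeq K(\Inj A^!)_{\nil} \simeq K(\Inj A^!)/\mc N$.

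The main potential obstacle is verifying that $\mb G(M)$ genuinely lies in $K(\Inj A^!)$ (not merely in $K(\Injn A^!)$) and that the cone $C_M$ is an injective curved dg-module; both rely crucially on the Noetherian hypothesis through Lemma \ref{lem:gradednoetherian}, which is why the corollary need not hold without it. Everything else is a straightforward assembly of Lemma \ref{lem:tildeG}, Theorem \ref{thm:main}, and Theorem \ref{thm:equivalenceinjnilp}.
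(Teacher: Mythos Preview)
Your proposal is correct and follows essentially the same approach as the paper: both combine Lemma~\ref{lem:gradednoetherian} with Theorem~\ref{thm:equivalenceinjnilp} to obtain $D(U)\simeq K(\Inj A^!)_{\nil}$, then show that $G(M)$ and $\mb G(M)$ agree in the Verdier quotient. The only cosmetic difference is in the second step: you argue directly that the cone of the inclusion $\iota_M$ lies in $\mc N$ by computing $S$ on it, whereas the paper applies $F$ to both objects and invokes the quasi-isomorphism $FG(M)\simeq S(G(M))=S(\mb G(M))\simeq F\mb G(M)$ from the proof of Theorem~\ref{thm:equivalenceinjnilp}, then uses that $F$ is an equivalence on the quotient; since $\mc N=\Ker F$ and $F$ factors through $S$ via Theorem~\ref{thm:quasiisoJF}, these are the same argument.
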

\begin{proof}
    By Lemma \ref{lem:gradednoetherian}, $K(\Injn A^!)=K(\Inj A^!)_\nil$. The equivalence then follows from Theorem \ref{thm:equivalenceinjnilp}.
    Since $K(\Inj A^!)_\nil\cap \mc N=0$, we can view $K(\Inj A^!)_\nil$ as a subcategory of $K(\Inj A^!)/\mc N$. We have two functors $G,\mb G\colon K(U)\to K(\Inj A^!)$. For all $M\in K(U)$, $FG(M)$ is quasi-isomorphic to $F\mb G(M)$ by the proof of Theorem \ref{thm:equivalenceinjnilp}. Since $F$ is an equivalence, this implies that $G(M)$ and $\mb G(M)$ are isomorphic in $K(\Inj A^!)/\mc N$.
\end{proof}

\section{Applications}\label{sec:applications}

In this final section, we list immediate applications of the main theorem for non-homogeneous Koszul algebras. Combined with Theorem \ref{thm:equivalenceoffilteredcurved}, we also make statements about Koszul curved dg-algebras.

\subsection{$t$-structure}

Let $(\Lambda,d,c)$ be a \coconnective cdga. The equivalence of Corollary~\ref{cor:equivalenceNoetherian} endows the category $K(\Inj \Lambda)_{\mathrm{nil}}$ with a canonical $t$-structure provided that the underlying algebra $\Lambda^{\#}$ is Koszul and graded left Noetherian.  


\begin{prop}\label{prop:tstructure}
Let $(\Lambda,d,c)$ be a \coconnective cdga such that $\Lambda^{\#}$ is Koszul.
    The subcategories
 \begin{align*}
   K(\Injn \Lambda)^{\le 0} & = \{ I \in K(\Injn \Lambda) \, | \, H^{i}(S(I)) = 0, \, \forall \, i > 0 \} \\
   K(\Injn \Lambda)^{\ge 0} & = \{ I \in K(\Injn \Lambda) \, | \, H^{i}(S(I)) = 0, \, \forall \, i < 0 \} 
 \end{align*}
 are the aisle and coaisle respectively of a non-degenerate $t$-structure. 
\end{prop}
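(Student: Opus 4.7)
The plan is to transport the standard $t$-structure on $D(U)$ along an equivalence of Positselski type, and then identify the transported aisle and coaisle with the ones defined in terms of $H^\idot(S(-))$.

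First I would invoke Theorem \ref{thm:equivalenceoffilteredcurved} to produce a non-homogeneous Koszul $\rin$-algebra $U$ with $U^! \cong \Lambda$ (this uses only that $\Lambda^\#$ is Koszul). By Theorem \ref{thm:equivalenceinjnilp}, the functors $(\mathbf F,\mathbf G)$ then give a triangulated equivalence
\[
\mathbf G \colon D(U) \iso K(\Injn \Lambda),
\]
with quasi-inverse $\mathbf F$. Consequently the standard $t$-structure $(D(U)^{\le 0},D(U)^{\ge 0})$ transports to a $t$-structure on $K(\Injn \Lambda)$, whose aisle consists of those $I$ with $H^i(\mathbf F(I)) = 0$ for $i>0$ and whose coaisle consists of those $I$ with $H^i(\mathbf F(I))=0$ for $i<0$. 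This is automatic — the only thing to check is that this transported $t$-structure coincides with the one written down in the statement.

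For that identification I would use Theorem \ref{thm:quasiisoJF}, which as noted in the proof of Theorem \ref{thm:equivalenceinjnilp} remains valid for $I \in K(\Injn \Lambda)$: the canonical map $S(I) \to \mathbf F(I)$ is a quasi-isomorphism of complexes of $\rin$-modules. Therefore $H^i(S(I)) \cong H^i(\mathbf F(I))$ for every $i \in \Z$, which matches the two definitions of aisle and coaisle on the nose. Non-degeneracy would follow in two complementary ways: abstractly, from non-degeneracy of the standard $t$-structure on $D(U)$ and the fact that $\mathbf G$ is an equivalence; concretely, if $I \in \bigcap_n K(\Injn \Lambda)^{\le -n}$ (or $\bigcap_n K(\Injn \Lambda)^{\ge n}$), then $S(I)$ has vanishing cohomology, i.e. $I \in \mathcal N$, and Proposition \ref{prop:injjacyclic} gives $K(\Injn \Lambda) \cap \mathcal N = 0$, whence $I = 0$.

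The main subtlety I expect to navigate is making sure every ingredient is available in the generality claimed, namely only assuming $\Lambda^\#$ Koszul (no Noetherian hypothesis): the existence of $U$ uses only Theorem \ref{thm:equivalenceoffilteredcurved}; the equivalence $D(U) \simeq K(\Injn \Lambda)$ of Theorem \ref{thm:equivalenceinjnilp} is stated without Noetherianness; and the quasi-isomorphism $S(I) \simeq \mathbf F(I)$ is valid for $I \in K(\Injn \Lambda)$ by the remark on the spectral sequence argument. Once these three ingredients are assembled, the verification of the $t$-structure axioms and of non-degeneracy is immediate.
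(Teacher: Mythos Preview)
Your proposal is correct and follows essentially the same route as the paper: invoke Theorem~\ref{thm:equivalenceoffilteredcurved} to obtain $U$, transport the standard $t$-structure along the equivalence $D(U)\simeq K(\Injn\Lambda)$ of Theorem~\ref{thm:equivalenceinjnilp}, and identify the transported aisle and coaisle via the quasi-isomorphism $S(I)\to \mathbf F(I)$ (valid for $I\in K(\Injn\Lambda)$, as observed in the proof of Theorem~\ref{thm:equivalenceinjnilp}). Your additional concrete argument for non-degeneracy via $K(\Injn\Lambda)\cap\mathcal N=0$ is a pleasant extra but not needed.
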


\begin{proof}
First, we note that Theorem~\ref{thm:equivalenceoffilteredcurved} says that there exists a non-homogeneous Koszul algebra $U$ such that $\Lambda = A^!$ as cdgas. Next, we note that $K(\Injn \Lambda)^{\le 0}$ is precisely the image of the standard aisle $D(U)^{\le 0} = \{ M \in D(U) \, | \, H^i(M) = 0, \, \forall \, i > 0 \}$. Indeed, writing $I = \mb G(M)$, the proof of Theorem~\ref{thm:equivalenceinjnilp} says that there is a quasi-isomorphism $S(I) \to F(\mb G(M)) \cong M$. Hence $H^i(M) = 0$ if and only if $H^i(S(I)) = 0$. Similarly,  $K(\Injn \Lambda)^{\ge 0}$ is the image of the standard coaisle $D(U)^{\ge 0} = \{ M \in D(U) \, | \, H^i(M) = 0, \, \forall \, i > 0 \}$. Since the standard $t$-structure on $D(U)$ is non-degenerate, it follows that the $t$-structure on $K(\Injn \Lambda)$ is also non-degenerate.
\end{proof}

If we reduce to complexes bounded above, we deduce similarly, by Corollary \ref{cor:main2abound}, that the subcategories 
 \begin{align*}
   K^-(\Inj \Lambda)^{\le 0} & = \{ I \in K^-(\Inj \Lambda) \, | \, H^{i}(S(I)) = 0, \, \forall \, i > 0 \} \\
   K^-(\Inj \Lambda)^{\ge 0} & = \{ I \in K^-(\Inj \Lambda) \, | \, H^{i}(S(I)) = 0, \, \forall \, i < 0 \} 
 \end{align*}
are the aisle and coaisle respectively of a (non-degenerate) $t$-structure, sent by $F$ to the standard $t$-structure on $D^-(U)$.  

If $\Lambda$ is also bounded then we deduce similarly, by Corollary~\ref{cor:main2b}, that the subcategories 
 \begin{align*}
   K(\Inj \Lambda)^{\le 0} & = \{ I \in K(\Inj \Lambda) \, | \, H^{i}(S(I)) = 0, \, \forall \, i > 0 \} \\
   K(\Inj \Lambda)^{\ge 0} & = \{ I \in K(\Inj \Lambda) \, | \, H^{i}(S(I)) = 0, \, \forall \, i < 0 \} 
 \end{align*}
are the aisle and coaisle respectively of a (non-degenerate) $t$-structure, sent by $F$ to the standard $t$-structure on $D(U)$.  



\begin{remark}
We expect that for any connected graded cdga, there is a $t$-structure on $K(\Injn \Lambda)$ whose aisle and coaisle are as in Proposition \ref{prop:tstructure} and that it is the one generated by the object corresponding to $k$ under the equivalence $K(\Injn \Lambda) \simeq D^{\mr{co}}({\Lambda})_\nil$ (where $k$ lives in the latter as it is a Verdier quotient of $K(\Lambda)_\nil$).
\end{remark}

\subsection{Bousfield localization}

Koszul duality, together with Bousfield localization, can be used to show that the inclusion $K(\Inj \Lambda)_\nil\hookrightarrow K(\Inj \Lambda)$ admits a left adjoint. 

\begin{prop}\label{prop:Bousfield}
Let $(\Lambda,d,c)$ be a \coconnective cdga such that $\Lambda^{\#}$ is Koszul and graded left Noetherian. There is a localization functor $L\colon K(\Inj \Lambda)\to K(\Inj \Lambda)$, such that $\Im L=K(\Inj \Lambda)_\nil$ and $\Ker L=\mc N$ where $\mc N$ is as in Subsection \ref{proofofMainThm}.
\end{prop}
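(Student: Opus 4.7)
The plan is to produce a right adjoint to the Verdier quotient $q \colon K(\Inj \Lambda) \to K(\Inj \Lambda)/\mc N$, from which $L := q^R \circ q$ will be the desired Bousfield localization. By Corollary~\ref{cor:equivalenceNoetherian}, the composite $K(\Inj \Lambda)_\nil \xhookrightarrow{\iota} K(\Inj \Lambda) \xrightarrow{q} K(\Inj \Lambda)/\mc N$ is already a triangulated equivalence with some quasi-inverse $\rho$. Hence, once $q^R$ is known to exist, it must be naturally isomorphic to $\iota \circ \rho$, and the formula $L = \iota \circ \rho \circ q$ then satisfies $\Im L = K(\Inj \Lambda)_\nil$, $\Ker L = \mc N$, and $L \circ L \simeq L$ (because $q \circ \iota \simeq \rho^{-1}$); the asserted left adjoint to $\iota$ is then $\rho \circ q$.

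To construct $q^R$, I would combine the homotopy-level adjunction $\tilde F \dashv \tilde G$ from Section~\ref{sec:equivalences} with Spaltenstein's theorem: applied to the Grothendieck abelian category $\Lmod U$ the latter produces a right adjoint $\pi^R \colon D(U) \to K(U)$ to the canonical Verdier quotient $\pi$. Then $\bar G := \tilde G \circ \pi^R$ is right adjoint to $\bar F := \pi \circ \tilde F \colon K(\Inj \Lambda) \to D(U)$. Since Theorem~\ref{thm:main} factors $\bar F$ as $F \circ q$ with $F$ an equivalence, transporting the adjunction $\bar F \dashv \bar G$ along $F$ produces the desired adjunction $q \dashv (\bar G \circ F)$.

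The crux is then to verify that this right adjoint actually lands in $K(\Inj \Lambda)_\nil$, equivalently that $\mc N^\perp = K(\Inj \Lambda)_\nil$ inside $K(\Inj \Lambda)$. The nontrivial inclusion $K(\Inj \Lambda)_\nil \subseteq \mc N^\perp$ amounts to showing $\Hom_{K(\Lambda)}(N, Y) = 0$ for $N \in \mc N$ and $Y \in K(\Inj \Lambda)_\nil$. By Theorem~\ref{thm:equivalenceinjnilp} any such $Y$ is isomorphic to $\iota \mb G(M)$ for some $K$-injective $M \in K(U)$, and Lemma~\ref{lem:tildeG} identifies $\iota \mb G(M) = \Gamma(\tilde G(M))$ as a sub-cdg-module of $\tilde G(M) \in K(\Inj \Lambda)$. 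The tensor-hom adjunction then gives $\Hom_{K(\Lambda)}(N, \tilde G(M)) = \Hom_{K(U)}(\tilde F(N), M)$, which vanishes because $M$ is $K$-injective and $\tilde F(N) \simeq S(N)$ is acyclic by Theorem~\ref{thm:quasiisoJF}. I would then transfer the vanishing to $\iota \mb G(M)$ via the triangle $\iota \mb G(M) \to \tilde G(M) \to Q \to$ (where $Q \in \mc N$ since $q$ identifies the first two terms by Corollary~\ref{cor:equivalenceNoetherian}), exploiting the graded Noetherian hypothesis: $\tilde G(M)$ decomposes as a sum of indecomposable injective hulls of shifted graded simples, with $\iota \mb G(M)$ collecting exactly those attached to the irrelevant ideal, and this graded splitting constrains the long exact sequence in $\Hom(N, -)$ enough to force the desired vanishing.

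The main obstacle is this final transfer of the vanishing from $\tilde G(M)$ to its locally nilpotent sub-cdg-module $\iota \mb G(M)$, since the splitting of $\tilde G(M)$ as a graded module need not respect the cdg-structure. I anticipate handling this by a Mittag-Leffler argument modelled on the proof of Proposition~\ref{prop:injjacyclic}, writing the cokernel $Q$ as a countable union of bounded sub-cdg-modules and inducting on the filtration, so that the vanishing $\Hom_{K(\Lambda)}(N, Q) = 0$ propagates from the bounded pieces (where $Q$ is clearly controlled) to the limit.
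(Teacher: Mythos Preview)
Your construction of a right adjoint $q^R$ via Spaltenstein is correct, and reducing the problem to the inclusion $K(\Inj \Lambda)_\nil \subseteq \mc N^\perp$ is a legitimate reformulation. The genuine gap is in the final transfer step. The cokernel $Q$ in your triangle $\iota\mb G(M)\to \tilde G(M)\to Q$ satisfies $\Gamma(Q)=0$: by Lemma~\ref{lem:tildeG} one has $\mb G(M)=\Gamma(\tilde G(M))$, the \emph{maximal} locally nilpotent sub-cdg-module of $\tilde G(M)$, so the quotient has no nonzero locally nilpotent---in particular no nonzero bounded---sub-cdg-modules. Your proposed exhaustion of $Q$ by bounded pieces therefore cannot even begin, and the induction modelled on Proposition~\ref{prop:injjacyclic} (which is driven entirely by local nilpotence) has no traction here. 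Stepping back, the long exact sequence of the triangle shows that what you actually need is $\Hom_{K(\Lambda)}(N,Q[-1])=0$ for every $N\in\mc N$; but $Q[-1]$ itself lies in $\mc N$ (you correctly observe $S(Q)$ is acyclic, and in fact $S(Q)=0$), so you would be asserting that all morphisms between two arbitrary objects of $\mc N$ vanish---false whenever $\mc N\neq 0$, as in Example~\ref{ex:dualnumbers}.

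By contrast, the paper's proof is much shorter and never constructs $q^R$ or compares it with $\iota\circ\rho$. It sets $\underline F$ to be the composite $K(\Inj \Lambda)\xrightarrow{F} K(U)\to D(U)$ and $\underline G$ to be the composite $D(U)\xrightarrow{\sim} K(\Inj\Lambda)_\nil \hookrightarrow K(\Inj\Lambda)$, where the first arrow is the equivalence of Corollary~\ref{cor:equivalenceNoetherian}. Since $\underline G$ is fully faithful, $L:=\underline G\circ\underline F$ is asserted to be a localization functor by \cite[Proposition~2.4.1]{KrauseLocalization}; then $\Ker L=\mc N$ follows from Theorem~\ref{thm:quasiisoJF}, and $\Im L=K(\Inj\Lambda)_\nil$ because $L$ restricts to an auto-equivalence of that subcategory. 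In particular the paper does not pass through $\tilde G\circ\pi^R$ or the problematic quotient $Q$ at all.
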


\begin{proof}
Let $U$ be the non-homogeneous Koszul algebra associated to $(\Lambda,d,c)$ via Theorem~\ref{thm:equivalenceoffilteredcurved}. Let $\underline F$ be the composition of $F$ with the projection to the derived category: 
$$ K(\Inj \Lambda)\xrightarrow{F}K(U)\to D(U). $$
Let $\underline G$ be the composition $D(U)\rightarrow K(\Inj \Lambda)_\nil \hookrightarrow K(\Inj \Lambda)$, where the first functor is the equivalence from Corollary \ref{cor:equivalenceNoetherian}. In particular, $\underline G$ is fully faithful, so the composition $L = \underline G \circ \underline F$ is a localization functor by \cite[Proposition~2.4.1]{KrauseLocalization}. By Theorem \ref{thm:quasiisoJF}, $\Ker L=\mc N$. Since the restriction of $L$ to $K(\Inj \Lambda)_\nil$ is an auto-equivalence, it follows that $\Im L=K(\Inj \Lambda)_\nil$.
\end{proof}

By \cite[Propositions~4.9.1-4.10.1]{KrauseLocalization} we get as immediate consequence:
\begin{cor}
Let $(\Lambda,d,c)$ be a \coconnective cdga such that $\Lambda^{\#}$ is Koszul and graded left Noetherian. The inclusion $K(\Inj \Lambda)_\nil\hookrightarrow K(\Inj \Lambda)$ has a left adjoint, and 
    $K(\Inj \Lambda)_\nil \cong K(\Inj \Lambda) / \mc{N}$.
\end{cor}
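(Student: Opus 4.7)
The plan is to deduce this immediately from Proposition \ref{prop:Bousfield} together with the abstract properties of (Bousfield) localization functors on triangulated categories, so most of the work has already been done.

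First, I would invoke Proposition \ref{prop:Bousfield} to produce a localization functor $L \colon K(\Inj \Lambda)\to K(\Inj \Lambda)$ whose essential image is $K(\Inj \Lambda)_\nil$ and whose kernel is the thick subcategory $\mathcal{N}$. Recall from the construction there that $L = \underline{G}\circ \underline{F}$, where $\underline{F}$ is the composition of $F$ with the projection $K(U)\to D(U)$ and $\underline{G}\colon D(U)\to K(\Inj\Lambda)_\nil\hookrightarrow K(\Inj\Lambda)$ is (up to the equivalence of Corollary \ref{cor:equivalenceNoetherian}) the fully faithful right adjoint.

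Next, I would apply the general categorical facts for localization functors on triangulated categories, namely \cite[Proposition 4.9.1]{KrauseLocalization} and \cite[Proposition 4.10.1]{KrauseLocalization}. The first of these says that for a localization functor $L$ on a triangulated category $\mathcal{C}$, the inclusion of the essential image $\Im L \hookrightarrow \mathcal{C}$ admits a left adjoint, induced by corestricting $L$ to its image. Applied to our $L$, this yields the desired left adjoint to $K(\Inj \Lambda)_\nil \hookrightarrow K(\Inj \Lambda)$. The second proposition identifies the essential image with the Verdier quotient by the kernel, i.e.\ it gives a canonical triangulated equivalence $\Im L \simeq \mathcal{C}/\Ker L$. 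In our setting this reads $K(\Inj \Lambda)_\nil \simeq K(\Inj \Lambda)/\mathcal{N}$, which is the second claim.

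There is essentially no obstacle here: once Proposition \ref{prop:Bousfield} is in hand, both conclusions are formal consequences of the general theory of Bousfield localizations on triangulated categories. The only thing worth emphasizing in the write-up is that the Noetherian hypothesis is needed only to ensure that Proposition \ref{prop:Bousfield} applies, since it is there that we identified $K(\Injn \Lambda)$ with $K(\Inj \Lambda)_\nil$ via Lemma \ref{lem:gradednoetherian} in order to transport the equivalence of Theorem \ref{thm:equivalenceinjnilp} to the form used in Corollary \ref{cor:equivalenceNoetherian}.
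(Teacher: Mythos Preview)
Your proposal is correct and follows exactly the paper's approach: the corollary is stated as an immediate consequence of Proposition~\ref{prop:Bousfield} via \cite[Propositions~4.9.1--4.10.1]{KrauseLocalization}, which is precisely what you do. Your additional remarks unpacking what those propositions assert and noting where the Noetherian hypothesis enters are accurate and helpful, but add nothing beyond what the paper intends.
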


Similarly, when $\Lambda$ is finite-dimensional over $\Bbbk$, we can use Koszul duality to construct a left adjoint to the inclusion $K(\Inj \Lambda) \hookrightarrow K(\Lambda)$. First, let $\mc{K}$ be the full subcategory of $K(\Lambda)$ consisting of all objects $N$ such that $F(N)$ is an acyclic $U$-module. 

\begin{prop}
Let $(\Lambda,d,c)$ be a $\Bbbk$-finite-dimensional \coconnective cdga such that $\Lambda^{\#}$ is Koszul. There is a localization functor $L\colon K(\Lambda)\to K(\Lambda)$, such that $\Im L=K(\Inj \Lambda)$, $\Ker L=\mc K$.
\end{prop}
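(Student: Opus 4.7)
The plan is to mirror the proof of Proposition~\ref{prop:Bousfield}, substituting Corollary~\ref{cor:main2b} for Corollary~\ref{cor:equivalenceNoetherian}. By Theorem~\ref{thm:equivalenceoffilteredcurved} there exists a non-homogeneous Koszul algebra $U$ whose Koszul dual is $(\Lambda,d,c)$. Since $\Lambda$ is finite-dimensional it is in particular bounded, so Lemma~\ref{lem:finglbdim} ensures that $A = \gr U$ has finite global dimension. Corollary~\ref{cor:main2b} then provides an equivalence $F \colon K(\Inj \Lambda) \xrightarrow{\sim} D(U)$, with quasi-inverse induced by $G$.

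Next, I would set $\underline F$ to be the composition $K(\Lambda) \xrightarrow{F} K(U) \to D(U)$, and let $\underline G \colon D(U) \to K(\Lambda)$ be the composition of a chosen quasi-inverse of $F|_{K(\Inj\Lambda)}$ with the fully faithful inclusion $K(\Inj\Lambda) \hookrightarrow K(\Lambda)$. Then $\underline G$ is fully faithful. Applying \cite[Proposition~2.4.1]{KrauseLocalization} to this adjoint pair identifies $L := \underline G \circ \underline F$ as a localization functor on $K(\Lambda)$.

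The remaining claims are then immediate. By construction $\Im L \subseteq K(\Inj \Lambda)$; conversely, for $I \in K(\Inj \Lambda)$ the quasi-inverse property of $\underline G$ yields $L(I) \cong I$ and so $\Im L = K(\Inj \Lambda)$. An object $N$ lies in $\Ker L$ precisely when $\underline F (N) = 0$ in $D(U)$, that is, when $F(N)$ is acyclic, which is the defining condition for $N \in \mathcal K$.

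The one step that requires care, and which is the main technical point hidden behind the invocation of \cite[Proposition~2.4.1]{KrauseLocalization}, is the adjunction $\underline F \dashv \underline G$, since the localization $\pi \colon K(U) \to D(U)$ does not itself possess a right adjoint. This is handled as in Proposition~\ref{prop:Bousfield}: because $U$ is an ordinary algebra, $K(U)$ has enough K-injectives, so any $M \in D(U)$ admits a K-injective representative $\hat M \in K(U)$, with $\underline G(M) \cong G(\hat M)$; the required natural isomorphism $\Hom_{K(\Lambda)}(N, \underline G M) \cong \Hom_{D(U)}(\underline F N, M)$ then follows from the homotopy-level adjunction $F \dashv G$ combined with the K-injectivity of $\hat M$.
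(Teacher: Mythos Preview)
Your proposal is correct and follows exactly the approach the paper intends: the paper's entire proof reads ``The proof is identical to that of Proposition~\ref{prop:Bousfield},'' and you have carried out precisely that translation, replacing Corollary~\ref{cor:equivalenceNoetherian} by Corollary~\ref{cor:main2b} (via Lemma~\ref{lem:finglbdim}) and $K(\Inj\Lambda)_{\nil}$ by $K(\Inj\Lambda)$.

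Your final paragraph, verifying the adjunction $\underline F \dashv \underline G$ through K-injective resolutions in $K(U)$, is more careful than the paper, which leaves this implicit both here and in Proposition~\ref{prop:Bousfield}. The point is valid: the adjunction is indeed what \cite[Proposition~2.4.1]{KrauseLocalization} requires, and your argument (composing the homotopy-level adjunction $F\dashv G$ with the right adjoint $\rho$ of $\pi\colon K(U)\to D(U)$, then identifying $G\rho$ with $\underline G$ using Lemma~\ref{lem:imageG2}) is the standard and correct way to establish it.
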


The proof is identical to that of Proposition~\ref{prop:Bousfield}. We get an immediate consequence:

\begin{cor}
Let $(\Lambda,d,c)$ be a $\Bbbk$-finite-dimensional \coconnective cdga such that $\Lambda^{\#}$ is Koszul. The inclusion $K(\Inj \Lambda) \hookrightarrow K(\Lambda)$ has a left adjoint, and 
    $K(\Inj \Lambda) \cong K(\Lambda) / \mc{K}$.
\end{cor}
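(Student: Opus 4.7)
The plan is to deduce this corollary directly from the preceding proposition, mirroring the argument used for the analogous statement about $K(\Inj \Lambda)_{\nil} \hookrightarrow K(\Inj \Lambda)$. The proposition provides a localization functor $L \colon K(\Lambda) \to K(\Lambda)$ with $\Im L = K(\Inj \Lambda)$ and $\Ker L = \mc{K}$, obtained as $L = \underline{G} \circ \underline{F}$, where $\underline{F}$ is the composition of $F$ with the projection $K(U) \to D(U)$ and $\underline{G}$ is the composition $D(U) \simeq K(\Inj \Lambda) \hookrightarrow K(\Lambda)$ (using Corollary \ref{cor:main2b}, which applies since $\Lambda$ being $\Bbbk$-finite-dimensional and $\Lambda^{\#}$ Koszul forces $\Lambda^{\#}$ to have finite global dimension by Lemma \ref{lem:finglbdim}).

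First I would invoke \cite[Proposition~4.9.1]{KrauseLocalization}, which states that whenever one has a localization functor $L$ on a triangulated category $\mc{T}$, the inclusion $\Im L \hookrightarrow \mc{T}$ admits a left adjoint, namely the functor induced by $L$ itself. Applied to our setting with $\mc{T} = K(\Lambda)$ and $\Im L = K(\Inj \Lambda)$, this immediately yields the desired left adjoint to $K(\Inj \Lambda) \hookrightarrow K(\Lambda)$.

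Next I would apply \cite[Proposition~4.10.1]{KrauseLocalization}, which states that for a localization functor $L$ on a triangulated category, the induced functor $\mc{T}/\Ker L \to \Im L$ is an equivalence of triangulated categories. In our situation this reads $K(\Lambda)/\mc{K} \simeq K(\Inj \Lambda)$, which is the second claim of the corollary.

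The proof is essentially a verification that the abstract machinery of Bousfield localization in triangulated categories applies; no obstacles are expected, as all the hard work was done in constructing $L$ via Koszul duality. The only point requiring attention is that $\Lambda$ being finite-dimensional ensures $A^!$ (in the notation of the equivalence from Theorem \ref{thm:equivalenceoffilteredcurved}) is bounded, so that Corollary \ref{cor:main2b} applies and $\underline{G}$ is genuinely fully faithful into all of $K(\Inj \Lambda)$, not merely into $K(\Inj \Lambda)_{\nil}$.
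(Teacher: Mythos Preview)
Your proposal is correct and follows exactly the same route as the paper: invoke the localization functor $L$ from the preceding proposition and then apply \cite[Propositions~4.9.1 and 4.10.1]{KrauseLocalization}. One small slip in your parenthetical justification: it is $A={}^!(\Lambda^\#)$, not $\Lambda^\#$ itself, that has finite global dimension by Lemma~\ref{lem:finglbdim} (since $\Lambda$ is bounded), and this is precisely the hypothesis of Corollary~\ref{cor:main2b}.
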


\subsection{K-theory}

In this section we note that Koszul duality can be used to compute the $K$-theory of some cdgas. Recall that for any ring $R$, there are groups $K_i(R)$ for $i \geq 0$ such that $K_0(R)$ is the Grothendieck group of finitely generated projective modules. $K$-theory can be defined for enhanced triangulated categories and one has $K_i(R) \simeq K_i(D^{\mr{perf}}(R))$ for all $i \geq 0$, where $D^{\mr{perf}}(R) \subseteq D(R)$ consists of the perfect complexes. See \cite[(V.2.7.2)]{Kbook} for more details. As the perfect complexes are the compact objects in ${D}(R)$, it is reasonable to define the $K$-theory of a cdga as the $K$-theory of the compact objects in its coderived category. 

Recall that the coderived category $D^{\mr{co}}({\Lambda})_\nil$ of a nonnegatively graded cdga $\Lambda$ is defined in \cite[Definition~6.11]{positselskiRelativeNonhomogeneousKoszul2021} (where the notation $D^{\mr{co}}(\mathsf{comod}\textrm{-}\Lambda)$ is used). If each $\Lambda^i$ is finite-dimensional, then the coderived category is equivalent to $K(\Injn \Lambda)$ by \cite[Theorem 8.17]{positselskiRelativeNonhomogeneousKoszul2021}. Note that $D^{\mr{co}}({\Lambda})_\nil$ (as well as all of its full subcategories) admit a dg-enhancement. Indeed, it is the Verdier localisation of a full subcategory of $K(\Lambda)$ which admits a DG-enhancement by Remark \ref{rem:dgenhancement}.  The existence of the enhancement of $D^{\mr{co}}({\Lambda})_\nil$ then follows from Drinfeld's quotient construction \cite{DRINFELD2004643}. Let $D^{\mr{co}}({\Lambda})_\nil^c$ denote the subcategory of compact objects in the triangulated category $D^{\mr{co}}({\Lambda})_\nil$. As a subcategory, it too admits a natural DG-enhancement and so by taking its dg-nerve, and applying Example 2.11 and Definition 10.1 in \cite{barwickalgktheory}, we can define its $K$-theory. For $i \geq 0$, we set
\[
K_i(\Lambda) \coloneq K_i(D^{\mr{co}}({\Lambda})_\nil^c) 
\]

For those interested only in Grothendieck groups, the discussion of enhancements can be ignored as $K_0(\Lambda)$ is just the Grothendieck group of the triangulated category $D^{\mr{co}}({\Lambda})_\nil^c$

\begin{prop}
Suppose $(\Lambda,d,c)$ is a cdga over $k$ such that $\Lambda^\#$ is Koszul, bounded and the quadratic dual of $\Lambda^\#$ is left Noetherian. Then $K_i(\Lambda) \cong K_i(k)$ for all $i \geq 0$.   
\end{prop}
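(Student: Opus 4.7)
The plan is to reduce, via the main Koszul-duality equivalence (Corollary~\ref{cor:main2b}), to computing the $K$-theory of a filtered algebra whose associated graded is regular and Noetherian, and then apply classical results of Quillen.

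First, since $\Lambda^{\#}$ is Koszul and bounded, and each degree is a quotient of tensor powers of the finitely generated $k$-bimodule $\Lambda^{1}$, the cdga $\Lambda$ is finite-dimensional over $\Bbbk$. In particular every curved $\Lambda$-module is locally nilpotent, so $K(\Injn \Lambda) = K(\Inj \Lambda) \simeq D^{\mr{co}}(\Lambda)_{\nil}$. Let $A$ denote the quadratic dual of $\Lambda^{\#}$. By hypothesis, $A$ is left Noetherian; by Lemma~\ref{lem:finglbdim}, the boundedness of $\Lambda^{\#}$ forces $A$ to have finite global dimension. By Theorem~\ref{thm:equivalenceoffilteredcurved}, $\Lambda$ is the Koszul-dual cdga of a unique non-homogeneous Koszul algebra $U$ with $\gr U \cong A$.

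The next step will be to invoke Corollary~\ref{cor:main2b}: the pair $(F,G)$ gives a triangle equivalence $D(U) \simeq K(\Inj \Lambda)$, which lifts to a dg-equivalence of the natural dg-enhancements (since $F$ and $G$ are dg-functors built from tensor products and $\HOM$ complexes) and hence restricts to an equivalence on compact objects, $D^{\mr{perf}}(U) \simeq D^{\mr{co}}(\Lambda)_{\nil}^c$. By the definition of $K_i(\Lambda)$ used in the paper this yields
\[
K_i(\Lambda) \cong K_i(D^{\mr{perf}}(U)) = K_i(U), \qquad i \ge 0.
\]
It then remains to identify $K_i(U)$ with $K_i(k)$. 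Since $U$ is a filtered $k$-algebra whose associated graded $A$ is left Noetherian of finite global dimension, Quillen's theorem on the $K$-theory of filtered rings (via the Rees construction, which interpolates between $U$ and $A$ and uses regularity and Noetherianity to invert the localization at $t$ and the specialization at $t-1$) gives $K_i(U) \cong K_i(A)$. Finally, since $A$ is a connected graded left Noetherian $k$-algebra of finite global dimension with $A_0 = k$ semisimple, $K_i(A) \cong K_i(k)$ by Quillen's resolution theorem applied to the exact category of finitely generated graded $A$-modules, together with graded Nakayama (which forces every finitely generated graded projective $A$-module to be free). Chaining these isomorphisms gives $K_i(\Lambda) \cong K_i(k)$.

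The main technical obstacles will be (a) ensuring that the triangulated equivalence of Corollary~\ref{cor:main2b} lifts to a dg-equivalence compatible with Barwick's $K$-theory construction used to define $K_i(\Lambda)$, so that the identification of compact objects passes through to $K$-theory, and (b) verifying that the precise hypotheses of Quillen's filtered-ring theorem are met in our semisimple-coefficient, one-sided-Noetherian setting, as Quillen's original formulation is stated for rings over a field with two-sided regularity assumptions.
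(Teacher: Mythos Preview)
Your proposal is correct and follows essentially the same strategy as the paper: use Theorem~\ref{thm:equivalenceoffilteredcurved} to produce $U$, transport compact objects across the Koszul-duality equivalence to identify $K_i(\Lambda)\cong K_i(U)$, and then reduce to $K_i(k)$ using that $\gr U=A$ is Noetherian of finite global dimension.

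The only substantive difference is in the final reduction. The paper does not pass through $K_i(A)$: instead it cites a result of McConnell--Robson to deduce that $U$ itself has finite global dimension (from the corresponding property of $A$), and then applies a single reference in Weibel's $K$-book (a filtered-ring statement for $U$ with $\Fi_0U=k$) to conclude $K_i(U)\cong K_i(k)$ directly. Your route via $K_i(U)\cong K_i(A)\cong K_i(k)$ is the standard Rees/Quillen argument and works just as well; your obstacle (b) is exactly what the paper sidesteps by citing these two references, and obstacle (a) is dispatched in the paper with the one-line observation that $F$ and $G$ are manifestly dg-functors, so the equivalence upgrades to a quasi-equivalence of enhancements. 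One small point: your justification of $K_i(A)\cong K_i(k)$ via ``graded Nakayama forces projectives to be free'' only handles $K_0$; for higher $K_i$ you should phrase this as the usual regularity plus d\'evissage argument ($K_i(A)\cong G_i(A)\cong G_i(k)=K_i(k)$), which is presumably what you intend.
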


\begin{proof}
By Theorem \ref{thm:equivalenceoffilteredcurved}, there is a non-homogeneous Koszul algebra $U$ whose Koszul dual is $\Lambda$. The triangle equivalence ${D}(U) \simeq D^{\mr{co}}({\Lambda})_\nil$ in Corollary 6.18 of \cite{positselskiRelativeNonhomogeneousKoszul2021} restricts to a triangle equivalence ${D}^{\mr{perf}}(U) \simeq {D}^{\mr{co}}(\Lambda)^c$ between the compact objects. By defintion of the inverse functors, it is clear this equivalence comes from a quasi-equivalence between the dg-enhancements. Hence, there are isomorphisms $K_i(U) \cong K_i({D}^{\mr{perf}}(U)) \cong K_i(D^{\mr{co}}({\Lambda})^c_\nil) = K_i(\Lambda)$ for all $i$. By definition $U$ is filtered and its associated graded ring $A$ is the quadratic dual of $\Lambda^\#$. By Lemma \ref{lem:finglbdim}, $A$ has finite global dimension and by assumption it is left Noetherian. By Corollary 6.18 of \cite{MR}, $U$ also has finite global dimension. By Remark 6.4.1 in \cite{Kbook}, we see that $K(U) \simeq K(k)$. 
\end{proof}

\subsection{Free resolutions}

We now consider applications of the equivalence to the representation theory of non-homogeneous Koszul algebras $U$. As in the graded case, filtered Koszul duality gives rise to an explicit projective resolution of any given $U$-module. Let $U$ be a non-homogeneous Koszul algebra. The quasi-isomorphism $FG \iso \mr{Id}$ applied to $U$ gives a resolution of $U$-$U$-bimodules
\begin{equation}\label{eq:projUresolution}
      FG(U) = U \otimes_{\rin} {}^* (A^!) \otimes_{\rin} U \to U.
\end{equation}
Notice that the complex $U \otimes_{\rin} {}^* (A^!) \otimes_{\rin} U$ is finite length if and only if $A^!$ is finite-dimensional, though this does not necessarily imply that $U$ has infinite global dimension if $A^!$ is infinite-dimensional. 

\begin{remark}\label{rem:freeresolution}
In the case of the deformed preprojective algebra associated to a finite connected non-Dynkin quiver, the resolution \eqref{eq:projUresolution} recovers the resolution constructed by Crawley-Boevey; see \cite[Theorem~2.7]{crawley-boeveyDeformedPreprojectiveAlgebras2022}.  
\end{remark}

If $M$ is any left $U$-module then tensoring \eqref{eq:projUresolution} on the right by $M$ gives rise to an explicit resolution
\begin{equation}\label{eq:projUresolution2}
  FG(M) = U \otimes_{\rin} {}^*(A^!) \otimes_{\rin} M \to M.
\end{equation}
If $M,N$ are left $U$-modules then using resolution \eqref{eq:projUresolution2} we see that there exists a differential (squaring to zero) on $\HOM_{\rin}(M,{}^! A \otimes_{\rin} N)$ such that 
\begin{align*}
\Ext^{i}_U(M,N) & \cong H^i(\underline{\Hom}_{U}(U \otimes_{\rin} {}^* (A^!) \otimes_{\rin} M,N)) \\
& \cong H^i(\underline{\Hom}_{{\rin}}({}^* (A^!) \otimes_{\rin} M,N)) \\ 
& \cong H^{i}(\underline{\Hom}_{{\rin}}(M,{}^! A \otimes_{\rin} N)),
\end{align*}
where we think of $M,N$ as complexes concentrated in one degree. 

We can apply the same argument to compute the Hochschild cohomology of $U$. Namely, 
\begin{align*}
HH^i(U) = \mathrm{Ext}^{i}_{U^e}(U,U) & \cong H^{i}({}^!A \otimes_{\rin} U).
\end{align*}
This recovers a result of Negron \cite[\S~8]{negronCupProductHochschild2017}. 

Recall that the algebra $U$ is a quotient of the tensor algebra $T_{\rin} E$. An irreducible $U$-module $\lambda$ is called \textit{rigid} if $E \cdot \lambda = 0$. Though rigid $A$-modules always exist, being the inflation of irreducible ${\rin}$-modules to $A$, they need not exist for $U$. In the case of symplectic reflection algebras, it is a non-trivial problem to classify rigid modules \cite{BellThielCusp,CiubotaruOne,MontaraniEtingof}. If $\lambda$ is rigid then one can check that the differential on $\underline{\Hom}_{{\rin}}(\lambda,{}^! A \otimes_{\rin} \lambda)$ vanishes and hence:

\begin{prop}\label{prop:rigidmodules}
    If $\lambda$ is a rigid $U$-module then $\mathrm{Ext}^{\idot}_U(\lambda,\lambda) = \underline{\Hom}_{{\rin}}(\lambda,{}^! A \otimes_{\rin} \lambda)$. 
\end{prop}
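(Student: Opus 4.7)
The plan is to use the explicit projective resolution $FG(\lambda) \to \lambda$ from equation \eqref{eq:projUresolution2} to compute $\Ext^{\idot}_U(\lambda, \lambda)$, and then show that the differential on the complex $\HOM_{\rin}(\lambda, {}^!A \otimes_{\rin} \lambda)$ (obtained by applying $\Hom_U(-,\lambda)$ and transporting across the chain of identifications displayed just above the proposition statement) vanishes identically when $\lambda$ is rigid. Once the differential is zero, the cohomology coincides with the underlying graded space, giving the claimed formula.

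First, I would unpack the differential on $FG(\lambda) = U \otimes_{\rin} {}^*(A^!) \otimes_{\rin} \lambda$. It decomposes into two pieces inherited from the differentials on $T = U \otimes A^!$ and on $G(\lambda)$. Writing $d_T = d_\otimes + 1 \otimes d_{A^!}$ as in Proposition \ref{prop:diffsquareto0}, and using that $d_\lambda = 0$ since $\lambda$ is concentrated in a single degree, a direct calculation gives that one piece has the form $u \otimes f \mapsto \sum_\alpha u x_\alpha \otimes (\hat{x}_\alpha . f)$ and the other encodes $(d_G f)(a) = -(-1)^{|f|}\bigl(\sum_\alpha x_\alpha \cdot f(\hat{x}_\alpha a) + f(\alpha^*(a))\bigr)$, a combination of an explicit left $U$-action of $x_\alpha \in E$ on values of $f$ in $\lambda$ and a pullback along the cdga differential $\alpha^*$ on $A^!$.

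Next, I would apply $\Hom_U(-,\lambda)$ and push the differential through the adjunction $\HOM_{\rin}({}^*(A^!) \otimes_{\rin} \lambda, \lambda) \cong \HOM_{\rin}(\lambda, {}^!A \otimes_{\rin} \lambda)$. By $U$-linearity, multiplication by $x_\alpha \in E \subseteq U$ on the $U$-factor of $FG(\lambda)$ translates into the left action of $x_\alpha$ on the target $\lambda$; the $\sum x_\alpha \cdot f(\hat{x}_\alpha a)$ contribution is manifestly an $E$-action on $\lambda$; and because $\alpha$ has image in $E$, dualising the remaining $f \circ \alpha^*$ contribution through the adjunction similarly produces a term involving the $E$-action on a factor of $\lambda$. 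Every term of the transported differential on $\HOM_{\rin}(\lambda, {}^!A \otimes_{\rin} \lambda)$ therefore factors through the action of some element of $E$ on $\lambda$, so the rigidity hypothesis $E \cdot \lambda = 0$ kills each term and the differential is identically zero.

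The main obstacle is the bookkeeping in the second step, namely tracking signs and source/target conventions through the tensor–hom adjunctions carefully enough to confirm that the $\alpha^*$-contribution is genuinely identified with an $E$-action on $\lambda$ rather than with some bare operation on the ${}^!A$-factor. Conceptually this must hold, because the differential on $FG(\lambda)$ records precisely the part of the $U$-module structure on $\lambda$ that is not already visible on the underlying $\rin$-module, and rigidity is exactly the assertion that this additional structure is trivial; the work of the proof is to make this conceptual observation quantitative.
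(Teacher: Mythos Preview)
Your overall strategy is exactly the paper's: use the resolution $FG(\lambda)\to\lambda$, apply $\Hom_U(-,\lambda)$, transport through the adjunctions to land in $\HOM_{\rin}(\lambda,{}^!A\otimes_{\rin}\lambda)$, and verify that the resulting differential vanishes. The paper's proof is the single clause ``one can check that the differential vanishes,'' so you are carrying out precisely that check.

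However, the step you flag as the obstacle is a genuine gap, and your conceptual argument for why the $\alpha^*$-contribution vanishes is not correct. After imposing rigidity, two of the three pieces of the differential do factor through the $E$-action on $\lambda$ and hence die, but the surviving piece $\tilde g\mapsto \pm\,\tilde g\circ(\phi\mapsto\phi\circ\alpha^*)$ does \emph{not}. Under the identifications ${}^*(A^!)\cong Q^{(\idot)}$ and ${}^*({}^*(A^!))\cong{}^!A$, precomposition with $\alpha^*$ on one side transports to postcomposition with $d_{{}^!A}\otimes 1$ on the other: the residual differential on $\HOM_{\rin}(\lambda,{}^!A\otimes_{\rin}\lambda)$ is $h\mapsto (d_{{}^!A}\otimes 1)\circ h$, a bare operation on the ${}^!A$-factor that has nothing to do with the $E$-action on $\lambda$.

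A concrete counterexample: take $U=U(\mathfrak g)$ for a nonabelian Lie algebra $\mathfrak g$ over $\Bbbk$, so $A=\Sym(\mathfrak g)$, ${}^!A=\Lambda\mathfrak g^*$ with Chevalley--Eilenberg differential, and $\lambda=\Bbbk$ the trivial module (rigid since $\mathfrak g\cdot\Bbbk=0$). The resolution $FG(\Bbbk)=U(\mathfrak g)\otimes\Lambda^{\idot}\mathfrak g$ is the Chevalley--Eilenberg resolution; applying $\Hom_{U(\mathfrak g)}(-,\Bbbk)$ kills the contract-and-multiply terms (rigidity) but leaves exactly the CE coboundary on $\Lambda^{\idot}\mathfrak g^*$. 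Thus $\Ext^{\idot}_{U(\mathfrak g)}(\Bbbk,\Bbbk)=H^{\idot}(\mathfrak g,\Bbbk)$, which for $\mathfrak g=\mathfrak{sl}_2$ has dimensions $1,0,0,1$, whereas $\HOM_{\Bbbk}(\Bbbk,\Lambda\mathfrak g^*\otimes\Bbbk)=\Lambda\mathfrak g^*$ has dimensions $1,3,3,1$.

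So the statement as written, and your proposed proof of it, require the additional hypothesis $\alpha=0$ (equivalently $d_{A^!}=0$). This holds in all of the paper's motivating examples (symplectic reflection algebras, deformed preprojective algebras, graded Hecke algebras), which is presumably why the issue was overlooked. Your instinct that the $\alpha^*$-term is the crux was exactly right; the resolution is to add the hypothesis rather than to argue it away.
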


Here the product in $\underline{\Hom}_{{\rin}}(\lambda,{}^! A \otimes_{\rin} \lambda)$ is given by $\phi_1 \cdot \phi_2 = (m_{A^!} \ot \id) \circ (\id \ot \phi_1) \circ \phi_2$. 

\subsection{Proof of Proposition~\ref{prop:koszuldualext}}\label{sec:PropExtproof} 

We need to show that if $A$ is a Koszul $k$-algebra then there are isomorphisms of graded $k$-algebras 
     $$
     ({}^! A)^{\op} \cong \Ext_{A}^{\idot}({}_A k,{}_A k), \quad A^! \cong \Ext_{A}^{\idot}(\rin_A ,\rin_A).
     $$   
We define $\psi \colon {}^! A \to \underline{\Hom}_{{\rin}}( \rin,{}^! A \otimes_{\rin} \rin)$ by $\psi(b)(r) = rb \ot 1$. Then 
\[
\psi(b_1 b_2)(r) = r b_1 b_2 \ot 1 = (\psi(b_2) \cdot \psi(b_1))(r)
\]
which shows that $\psi$ is a ring isomorphism $ ({}^! A)^{\op}  \to \underline{\Hom}_{{\rin}}(\rin ,{}^! A \otimes_{\rin} \rin)$. It follows from Proposition~\ref{prop:rigidmodules} that $({}^! A)^{\op} \cong \Ext_{A}^{\idot}({}_A k,{}_A k)$. The second isomorphism is similar. 

\appendix

\section{Monoidal Pairs}\label{appendix}

\subsection{Monoidal pairs}

In this section let $(\mathcal{M},\otimes,\rin)$ denote an abelian category with an exact closed rigid monoidal structure. For instance, $\mathcal{M}$ is the category of finitely generated bimodules over a finite-dimensional semisimple $\Bbbk$-algebra $\rin$. Closed means that the monoidal category admits an internal hom functor $\hom(-,-)$. Let $(-)^\ast := \hom(-,\rin): \mathcal{M} \to \mathcal{M}^{op}$. Then $\mathcal{M}$ rigid means that the dual $M^\ast$ of any object $M \in \mathcal{M}$ satisfies a certain list of axioms, as given for instance in \cite[Section~2.10]{etingofTensorCategories2015}. In particular, for any object $M$ there are adjunctions $- \otimes M \dashv - \otimes M^\ast$ and $M^\ast - \otimes  \dashv M \otimes -$. The unit and counit of these adjunctions are given by tensoring with so-called evaluation and coevaluation maps $e_M \colon M^\ast \otimes M \to \rin$ and $c_M \colon \rin \to M \otimes M^\ast$. Furthermore, the following composition is the identity
\[
M \xrightarrow{c_M \otimes M} M \otimes M^\ast \otimes M \xrightarrow{M \otimes \, e_M} M.
\]
It also follows that $(-)^\ast$ is a strong anti-monoidal functor, i.e., there are natural isomorphisms 
\[
\tau_{M,N}:  (M \otimes N)^\ast \simeq N^\ast \otimes M^\ast.
\]
Recall that the dual of a map $f:M\rightarrow N$ can be defined in terms of evaluation and coevaluation \cite[(2.47)]{etingofTensorCategories2015}:
\begin{equation}\label{eq:defdual}
    f^*:N^*\xrightarrow{1\ot \, c_M}N^*\ok M\ok M^*\xrightarrow{1\ot f\ot 1}N^*\ok N\ok M^*\xrightarrow{e_N\ot 1}M^*.
\end{equation}
We will use the following facts.

\begin{prop}\label{prop:monoidalnonsense}
The following diagrams commute for any $M,N \in \mathcal{M}$ and morphism $f \colon M \to N$.
\[
\begin{tikzcd}
\rin \arrow[r,"c_M"] \arrow[d,"c_{M \otimes M}"] & M \otimes M^\ast \arrow[d,"M \otimes \, c_M \otimes M^\ast"] \\  
 (M \otimes M) \otimes (M \otimes M)^\ast \arrow[r,"\sim","1 \otimes \, \tau_{M,M}"'] & M \otimes M \otimes M^\ast \otimes M^\ast 
\end{tikzcd}
\]
\[
\begin{tikzcd}
N^\ast \otimes M \arrow[d,"N^\ast \otimes f"] \arrow[r,"f^\ast \otimes M"] & M^\ast \otimes M \arrow[d,"e_M"] \\
N^\ast \otimes N \arrow[r,"e_N"] & \rin \\
\end{tikzcd}
\]
\end{prop}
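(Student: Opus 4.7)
The plan is to verify both commutative diagrams by direct computation from the axioms of a rigid monoidal category, in particular the snake (triangle) identities
$(1_M \otimes e_M) \circ (c_M \otimes 1_M) = 1_M$ and $(e_M \otimes 1_{M^*}) \circ (1_{M^*} \otimes c_M) = 1_{M^*}$,
together with the defining formula \eqref{eq:defdual} of $f^*$ and the characterization of the anti-monoidal isomorphism $\tau_{M,N}$. By Mac Lane's coherence theorem, I may assume $\mathcal{M}$ is strict, suppressing associators and unitors throughout.

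For the second (lower) diagram, I would substitute \eqref{eq:defdual} into the composite $e_M \circ (f^* \otimes 1_M): N^* \otimes M \to \rin$. The resulting composition first applies $1_{N^*} \otimes c_M \otimes 1_M$, then $1_{N^*} \otimes f \otimes 1_{M^*} \otimes 1_M$, then $e_N \otimes 1_{M^*} \otimes 1_M$, and finally the outer $e_M$. The terminal $e_M$ and the inserted $c_M$ act on the same $M \otimes M^* \otimes M$ strand coming from the rightmost factor, so applying the snake identity $(1_M \otimes e_M) \circ (c_M \otimes 1_M) = 1_M$ collapses this strand and leaves precisely $e_N \circ (1_{N^*} \otimes f)$. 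This is exactly the claim of the second diagram.

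For the first (upper) diagram, I would invoke the universal property defining $\tau_{M,N}$: the right dual of $M \otimes N$ can be realized either as $(M \otimes N)^*$ with its tautological evaluation and coevaluation, or as $N^* \otimes M^*$ with evaluation and coevaluation built from nesting, and $\tau_{M,N}$ is the unique isomorphism between them compatible with these structures. Explicitly, $\tau_{M,N}$ is characterized by the identity
\[
(1_{M \otimes N} \otimes \tau_{M,N}) \circ c_{M \otimes N} = (1_M \otimes c_N \otimes 1_{M^*}) \circ c_M,
\]
which can be verified by applying $e_{M \otimes N}$ (through $\tau_{M,N}^{-1}$) and reducing to nested snake identities. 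Specializing to $N = M$ gives exactly the upper diagram.

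The main obstacle is purely notational: one must carefully track which tensor factor of $M \otimes M \otimes M^* \otimes M^*$ each map operates on, since the symmetric notation hides subtle distinctions when several copies of $M$ and $M^*$ appear simultaneously, and similarly for the interaction of $e_M, e_N, c_M$ in the lower diagram. Once the computation is carried out in the strict setting, both diagrams reduce to a single application of either the snake identity or the uniqueness clause in the definition of the rigid structure.
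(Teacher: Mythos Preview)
Your proposal is correct. For the first diagram you invoke uniqueness of duals to characterize $\tau_{M,N}$, which is exactly what the paper does (citing \cite[Proposition~2.10.5, Exercise~2.10.7(b)]{etingofTensorCategories2015}). For the second diagram your approach differs slightly in presentation: you expand $f^*$ via \eqref{eq:defdual} and collapse directly using the snake identity $(1_M \otimes e_M)\circ(c_M \otimes 1_M)=1_M$, whereas the paper phrases the same computation through the adjunction bijections $\Hom(N^*\otimes M,\rin)\cong\Hom(M,N)$ and $\Hom(N^*\otimes M,\rin)\cong\Hom(N^*,M^*)$, observing that the two legs of the square are the images of $f$ and $f^*$ under these bijections and that their composite is $(-)^*$ by definition. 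Since those adjunction bijections are implemented by tensoring with $e_M$, $e_N$, $c_M$ and the snake identities are precisely what make them inverse to each other, the two arguments are the same computation in different packaging; yours is the string-diagram version, the paper's is the hom-set version.
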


\begin{proof} 
The commutativity of the first diagram follows from the uniqueness of the dual \cite[Proposition 2.10.5]{etingofTensorCategories2015}. See Exercise 2.10.7 (b) from \cite{etingofTensorCategories2015}.

Notice that we have two adjunctions that give bijections
$$\Hom(N^*\ot M,\rin)\cong \Hom(M,N), $$
$$\Hom(N^*\ot M,\rin)\cong\Hom(N^*,M^*). $$
The bottom left half of the diagram is the image of $f\in\Hom(M,N)$ under the first adjunction, while the top right half is the image of $f^*\in\Hom(N^*,M^*)$ under the second. It is thus sufficient to show that the following diagram commutes:
\begin{equation*}
\begin{tikzcd}
    & \Hom(N^*\ot M) &  \\
{\Hom(M,N)} \arrow[rr, "(-)^*"] \arrow[ru] &                & {\Hom(N^*,M^*)} \arrow[lu]
\end{tikzcd}
\end{equation*}
We can check directly that the composition $\Hom(M,N)\rightarrow\Hom(N^*\ot M)\rightarrow\Hom(N^*,M^*)$ sends $f$ to the composition \eqref{eq:defdual}, which by definition is equal to $f^*$.
\end{proof}

\begin{lem}\label{lemma:adjunctioncoev}
The adjunct of a composite
\[
P \xrightarrow{i} F \xrightarrow{p} B
\]
across $- \otimes P \dashv - \otimes P^\ast$ is 
\begin{equation}\label{eq:latteradjunction}
    \rin \xrightarrow{c_F} F \otimes F^\ast \xrightarrow{p \otimes i^\ast} B \otimes P^\ast.
\end{equation}
\end{lem}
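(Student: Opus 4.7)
Proof proposal. The plan is to unwind the standard formula for the adjunct and reduce the claim to a naturality identity for coevaluation/duality, which in turn follows from the axioms listed in Proposition~\ref{prop:monoidalnonsense}.

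Step 1. Recall that for the adjunction $-\otimes P \dashv - \otimes P^\ast$ with unit $c_P\colon \rin \to P\otimes P^\ast$, the adjunct of any morphism $g\colon X\otimes P\to Y$ is
\[
\tilde{g}\;=\;(g\otimes P^\ast)\circ (X\otimes c_P)\colon X\longrightarrow Y\otimes P^\ast.
\]
Taking $X=\rin$, $Y=B$ and $g=p\circ i\colon P\to B$, the adjunct becomes
\[
\rin \xrightarrow{\,c_P\,} P\otimes P^\ast \xrightarrow{\,(p\circ i)\otimes P^\ast\,} B\otimes P^\ast,
\]
which factors as $(p\otimes P^\ast)\circ (i\otimes P^\ast)\circ c_P$. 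So I need to identify
\[
(i\otimes P^\ast)\circ c_P \;=\; (F\otimes i^\ast)\circ c_F.
\]

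Step 2. Both sides are morphisms $\rin\to F\otimes P^\ast$. By the adjunction recalled in Step~1, to show equality it suffices to check that their adjuncts in $\Hom(P,F)$ coincide. The adjunct of $(i\otimes P^\ast)\circ c_P$ is
\[
P \xrightarrow{\,c_P\otimes P\,} P\otimes P^\ast\otimes P \xrightarrow{\,i\otimes P^\ast\otimes P\,} F\otimes P^\ast\otimes P \xrightarrow{\,F\otimes e_P\,} F,
\]
which by the snake identity for $P$ equals $i$. The adjunct of $(F\otimes i^\ast)\circ c_F$ is
\[
P \xrightarrow{\,c_F\otimes P\,} F\otimes F^\ast\otimes P \xrightarrow{\,F\otimes i^\ast\otimes P\,} F\otimes P^\ast\otimes P \xrightarrow{\,F\otimes e_P\,} F.
\]
Using the second diagram of Proposition~\ref{prop:monoidalnonsense} (with $N=F$, $M=P$, $f=i$), we have $e_P\circ (i^\ast\otimes P)=e_F\circ (F^\ast\otimes i)$, so this composite rewrites as
\[
P \xrightarrow{\,c_F\otimes P\,} F\otimes F^\ast\otimes P \xrightarrow{\,F\otimes F^\ast\otimes i\,} F\otimes F^\ast\otimes F \xrightarrow{\,F\otimes e_F\,} F,
\]
which, again by the snake identity (now for $F$), equals $i$.

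Step 3. Since the two adjuncts agree, the equality $(i\otimes P^\ast)\circ c_P=(F\otimes i^\ast)\circ c_F$ holds, and substituting back gives the desired expression
\[
\tilde{g}\;=\;(p\otimes i^\ast)\circ c_F.
\]
The only mildly subtle point is Step~2, the naturality identity between coevaluations; once that is in hand the rest is bookkeeping. No further obstacles are anticipated.
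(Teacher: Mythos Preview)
Your proof is correct and uses the same ingredients as the paper's: the snake identity and the second square of Proposition~\ref{prop:monoidalnonsense}. The only difference is directional---the paper computes the adjunct of the candidate map \eqref{eq:latteradjunction} directly and shows it equals $p\circ i$, whereas you start from $p\circ i$, isolate the intermediate identity $(i\otimes P^\ast)\circ c_P=(F\otimes i^\ast)\circ c_F$, and then prove that by passing to adjuncts again; your Step~2 verification of the right-hand side is exactly the paper's diagram chase.
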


\begin{proof}
By definition, the adjunct of \eqref{eq:latteradjunction} is
\[
P \xrightarrow{c_F \otimes P} F \otimes F^\ast \otimes P \xrightarrow{p \otimes i^\ast \otimes P} B \otimes P^\ast \otimes P \xrightarrow{B \otimes e_P} B. 
\]
This is the same as the top row in the diagram below
\[
\begin{tikzcd}
P \arrow[d,"i"'] \arrow[r,"c_F \otimes P"] & F \otimes F^\ast \otimes P \arrow[rr,"F \otimes i^\ast \otimes P"] \arrow[d,"F \otimes F^\ast \otimes i"'] & & F \otimes P^\ast \otimes P \arrow[r,"F \otimes e_P"]  \arrow[d,"F \otimes e_P"'] & F \arrow[dl,equal] \arrow[r,"p"]&  B \\
F  \arrow[r,"c_F \otimes F"] & F \otimes F^\ast \otimes F \arrow[rr," F \otimes e_F"] & & F & & 
\end{tikzcd}
\]
The left square clearly commutes, the middle square is $F$ tensored with the second diagram in Proposition~\ref{prop:monoidalnonsense}. The bottom horizontal composition is the identity, and so we are done. 
\end{proof}

\printbibliography

\end{document}